\providecommand{\tabularnewline}{\\}
\numberwithin{equation}{section}
\numberwithin{figure}{section}
  \theoremstyle{plain}
  \newtheorem*{thm*}{Theorem}
\theoremstyle{plain}
\newtheorem{thm}{Theorem}[section]
  \theoremstyle{plain}
  \newtheorem{prop}[thm]{Proposition}
  \theoremstyle{plain}
  \newtheorem{lem}[thm]{Lemma}
  \theoremstyle{plain}
  \newtheorem{cor}[thm]{Corollary}
  \theoremstyle{remark}
  \newtheorem*{rem*}{Remark}
  \theoremstyle{definition}
  \newtheorem{defn}[thm]{Definition}
  \theoremstyle{plain}
  \newtheorem{conjecture}[thm]{Conjecture}
  \theoremstyle{remark}
  \newtheorem{rem}[thm]{Remark}
\begin{document}

\title{Isoperimetric Problems in Sectors With Density}

\maketitle
\begin{center}
Alexander Díaz, Nate Harman, Sean Howe, David Thompson
\par\end{center}
\begin{abstract}
We consider the isoperimetric problem in planar sectors with density
$r^{p}$, and with density $a>1$ inside the unit disk and $1$ outside.
We characterize solutions as a function of sector angle. We also solve
the isoperimetric problem in $\mathbb{R}^{n}$ with density $r^{p},\; p<0$.

\tableofcontents{}
\end{abstract}

\section{\label{sec:Introduction}Introduction}

A \emph{density }on the plane is a function weighting both perimeter
and area, or, on a general manifold, a function weighting surface
area and volume. Manifolds with density occur broadly in mathematics:
from probability, where the classical example of Gauss space ($\mathbb{R}^{n}$
with density $ce^{-a^{2}r^{2}}$) plays an important role, to Riemannian
geometry, where they appear as quotient spaces of Riemannian manifolds,
and even to physics, where they describe spaces with different mediums.
They have recently been in the spotlight for their role in Perelman's
proof of the Poincaré Conjecture. (For general references on these
topics, see \cite{Morgan-Manifolds with Density} or better \cite[Ch. 18]{Morgan - GMT}.)

We study the isoperimetric problem in planar sectors with certain
densities. In this context, the isoperimetric problem seeks to enclose
prescribed (weighted) area with least (weighted) perimeter (not counting
the boundary of the sector). Solutions are known for very few surfaces
with densities (see Sect. \ref{sec:Isoperimetric-problems-on-Manifolds-with-Density}
below). Our major theorem after Dahlberg \emph{et al. }\cite[Thm. 3.16]{Dahlberg et al - Isoperimetric regions in planes with density r^p}
characterizes isoperimetric curves in a $\theta_{0}$-sector with
density $r^{p}$, $p>0$:
\begin{thm*}
(\ref{thm:Major theorem on MWD}) Given $p>0$, there exist $0<\theta_{1}<\theta_{2}<\infty$
such that in the $\theta_{0}$-sector with density $r^{p}$, isoperimetric
curves are (see Fig. \ref{fig:Possible minimizers for sector w/density r^p}):

1. for $0<\theta_{0}<\theta_{1}$, circular arcs about the origin,

2. for $\theta_{1}<\theta_{0}<\theta_{2}$, undularies,

3. for $\theta_{2}<\theta_{0}<\infty$, semicircles through the origin.
\end{thm*}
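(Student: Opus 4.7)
The plan is to combine existence and regularity with the first variation formula, classify the resulting constant-weighted-curvature curves, and then compare weighted perimeter-to-area ratios as a function of the sector angle.

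First, I would show that for each $\theta_0 > 0$ and each prescribed weighted area $A$ an isoperimetric region exists with smooth interior boundary. Because the density $r^p$ with $p>0$ blows up at infinity, a standard compactness argument from the theory of manifolds with density prevents mass escape, while vanishing at the origin permits curves to approach or pass through the vertex without penalty. Regularity theory for weighted problems then implies that the interior of the boundary is a smooth curve of constant generalized curvature $\kappa_\psi = \kappa + \partial_n \log r^p$, and the first variation along each bounding ray forces perpendicular contact at $\theta=0$ and $\theta=\theta_0$.

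Second, I would invoke the classification of constant-weighted-curvature curves in the plane with density $r^p$ from Dahlberg \emph{et al.} \cite{Dahlberg et al - Isoperimetric regions in planes with density r^p}. Up to scaling and rotation, these fall into three families: arcs of circles centered at the origin, (semi)circles through the origin, and a one-parameter family of undularies. In the $\theta_0$-sector, only members of each family compatible with perpendicular contact at both bounding rays are admissible: circular arcs about the origin for every $\theta_0$; semicircles through the origin when $\theta_0$ is large enough for a semicircle to sit tangent to one ray and perpendicular to the other; and undularies for those $\theta_0$ determined by the period and phase of their polar parametrization.

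Third, I would compute and compare the weighted isoperimetric ratio of each admissible family as a function of $\theta_0$ alone. The homogeneity $r \mapsto \lambda r$ renders the ratio scale-invariant, so the optimal choice depends only on $\theta_0$, not on $A$. The small-angle limit favors arcs about the origin, which sweep out thin annular sectors efficiently, while the large-angle limit reproduces the full-plane result of Dahlberg \emph{et al.}, where semicircles through the origin minimize. In the intermediate regime the undularies should provide a continuous interpolation, and the transition values $\theta_1$ and $\theta_2$ are then identified as the unique angles where the arc ratio meets the undulary ratio and where the undulary ratio meets the semicircle ratio, respectively.

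The main obstacle will be ruling out any further switching among the three ratio curves and excluding exotic competitors, such as disconnected regions clustered at the vertex or broken curves meeting the rays at singular points. This requires a careful analysis of the ODE for constant weighted curvature in polar coordinates in order to parametrize the undulary family, verify that its ratio depends monotonically on $\theta_0$, and establish a convexity argument that gives exactly one crossing on each side. The delicate behavior of the density at $r=0$ will also need separate treatment to exclude minimizers with pathologies at the vertex.
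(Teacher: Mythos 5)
There is a genuine gap, and it sits exactly where you have placed your ``main obstacle.'' Your plan is to parametrize the undulary family, show its isoperimetric ratio is monotone in $\theta_{0}$, and locate $\theta_{1}$ and $\theta_{2}$ as the unique crossings of the arc, undulary, and semicircle ratio curves. That is not a technical loose end to be tidied up at the end: it is essentially Conjecture \ref{con:Conjecture on major thm for MWD} together with the hypothesis of Proposition \ref{pro:If period of CGCC increasing, then...-1}, which the paper could not prove and supports only with numerics (the half-period integral in Remark \ref{rem:CGCC remark} is an elliptic-type integral whose monotonicity in the curvature parameter remains open). A proof of the theorem that routes through monotonicity or single-crossing properties of the undulary family is therefore not available by the methods at hand, and nothing in your sketch supplies a substitute argument.

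The paper's proof avoids analyzing undularies altogether. It establishes two one-way monotonicity statements in the sector angle: (i) if the circular arc about the origin fails to be uniquely isoperimetric at some $\theta_{0}$, it is never isoperimetric for any larger angle (Prop. \ref{pro:if circle minimizes, it minimizes for all theta less}, a short scaling argument via the map $\alpha\mapsto\alpha\theta/\theta_{0}$, which multiplies area by $\theta/\theta_{0}$ but only scales tangential perimeter); and (ii) once the semicircle through the origin is isoperimetric it remains uniquely so for all larger angles (Prop. \ref{pro:The-isoperimetric-ratio is increasing for theta_0 < theta_2, constant after, semicircle uniquely}, proved by decomposing a $(\theta_{0}+t)$-sector into a $\theta_{0}$-sector and a $t$-sector and using concavity of $A\mapsto A^{(p+1)/(p+2)}$ in Lemma \ref{lem:Inequality on extending increasing of isoperimetric ratio}). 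These give existence of transition angles $\theta_{1}\le\theta_{2}$ without any knowledge of how the undulary ratio varies. The quantitative content $0<\theta_{1}<\theta_{2}<\infty$ then comes from four separate inputs your proposal does not touch: circular arcs are uniquely isoperimetric at $\theta_{0}=\pi/(p+1)$ (four-vertex theorem, Thm. \ref{thm:circular arcs minimize for pi/(p+1)}); circular arcs lose positive second variation past $\pi/\sqrt{p+1}$ (Prop. \ref{pro:Circles are stable until pi/root p+1}); semicircles are beaten by an explicit cut-and-replace competitor for $\theta_{0}<\pi(p+2)/(2p+2)$ (Prop. \ref{pro:semi-circles don't minimize before pi/2*(p+2)/(p+1)}); and semicircles win at $\theta_{0}=\pi$ (Prop. \ref{pro:semi-circles minimize in half plane}, from the planar result via the cone--sector correspondence). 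In particular the strict inequality $\theta_{1}<\theta_{2}$ --- i.e.\ that undularies actually occur on a nonempty interval --- follows only because $\pi/\sqrt{p+1}<\pi(p+2)/(2p+2)$ for $p>0$; your proposal gives no mechanism for this, since in your framework it would require showing the undulary ratio genuinely dips below both competitors somewhere, which is again the open part. Your first two steps (existence, regularity, perpendicularity, and the reduction to the three candidate families) do match the paper's Lemma \ref{lem:Minimizers are circles, semicircles, or unduloids} and its supporting propositions, but the comparison step needs to be replaced by the monotonicity-plus-bounds scheme above.
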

We give bounds on $\theta_{1}$ and $\theta_{2}$ in terms of $p$,
and a conjecture on their exact values. Section \ref{sec:CGCC-1}
gives further results on constant generalized curvature curves. Sectors
with density $r^{p}$ are related to $L^{p}$ spaces (see \emph{e.g.}
Cor. \ref{cor:L^p norm inequality}), have vanishing generalized Gauss
curvature \cite[Def. 5.1]{Corwin et al - Differential geometry of manifolds with density},
and have an interesting singularity at the origin where density vanishes.
Adams \emph{et al. }\cite{Adams et al - Isoperimetric Regions in Gauss Sectors}
previously studied sectors with Gaussian density.

In Theorems \ref{thm:thetalessthanpi}, \ref{thm:Pi<thetanot<aPI},
and \ref{thm:aPI<thetanot}, we provide a relatively minor result
after Ca$\tilde{\text{n}}$ete \emph{et al. }\cite[Thm. 3.20]{Canete et al - Some isoperimetric problems in planes with density}
characterizing isoperimetric curves in a $\theta_{0}$-sector with
density $a>1$ inside the unit disk and density $1$ outside the unit
disk. An interesting property of this problem is that it deals with
a noncontinuous density. There are five different kinds of isoperimetric
regions depending on $\theta_{0}$, $a$, and the prescribed area,
shown in Figure \ref{fig:Isoperimetric-sets-for-thetanot-sector}.

In Section \ref{sec:R^n w/ radial} we discuss basic results in $\mathbb{R}^{n}$
with radial density. We use a simple averaging technique of Carroll
\emph{et al. \cite[Prop. 4.3]{Carroll et al - Isoperimetric problem on planes with density}
}to give a short proof of a result of Betta \emph{et al.} (\cite[Thm. 4.3]{Betta et al}
and our Thm. \ref{thm:BettaEtAl-R^n}) and to solve the isoperimetric
problem in $\mathbb{R}^{n}$ with density $r^{p},\; p<0$ (Thm. \ref{Pro:Hypersphere-Minimize-p_LT_-n}). 

\begin{figure}
\includegraphics[angle=-90,origin=c,width=4in,height=2in]{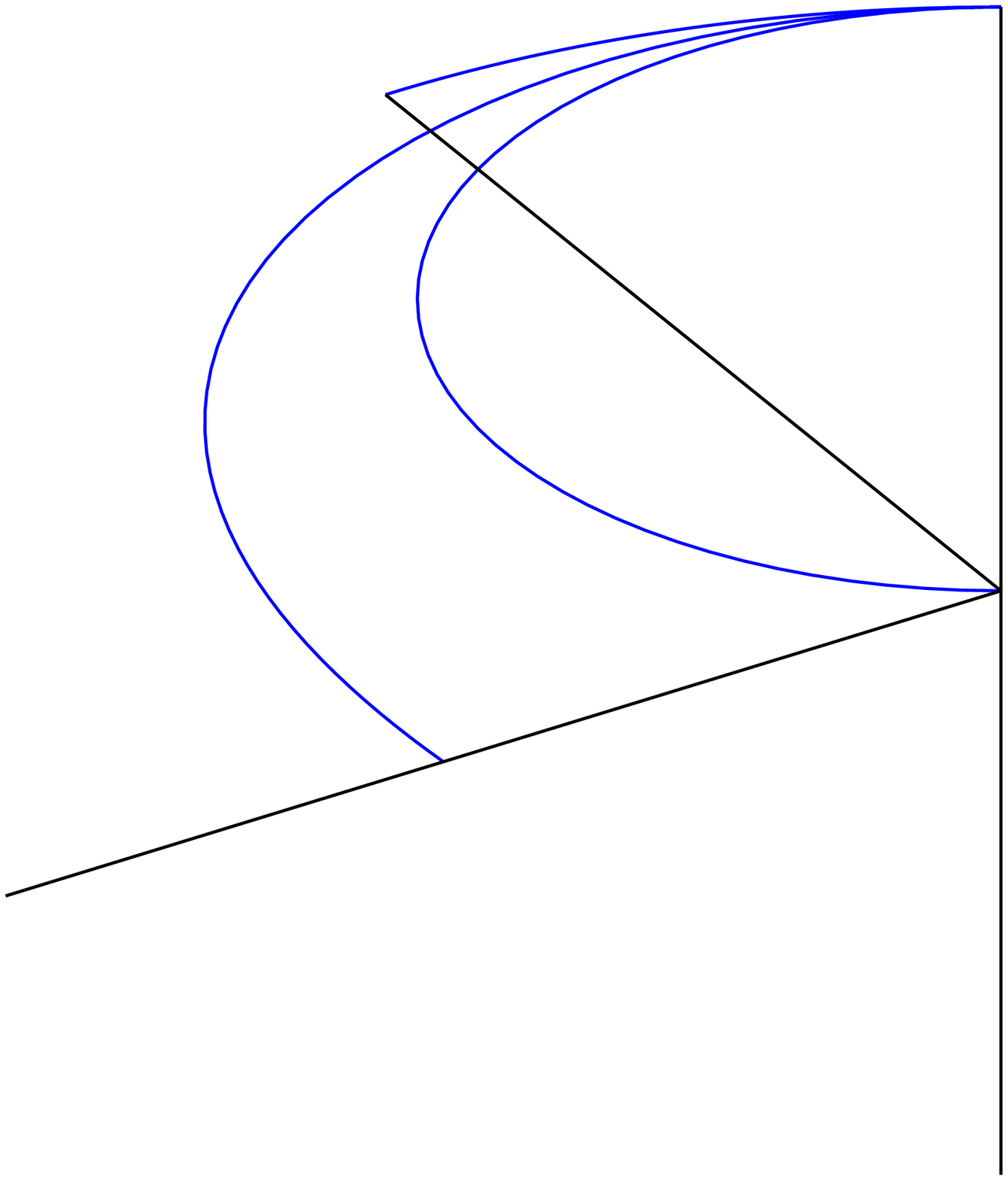}

\caption{\label{fig:Possible minimizers for sector w/density r^p}The three
isoperimetric curves for sectors with density $r^{p}$: a circular
arc about the origin for small sectors, an undulary for medium sectors,
and a semicircle through the origin for large sectors.}

\end{figure}

Due to length concerns, there are several points where we have chosen
to omit or shorten arguments that are either standard in the field
or similar to arguments given in earlier works. Among these are: the
existence results of Theorems \ref{The:Existence of minimizers in manifolds with area density satisfying certain properties},
the ball-density theorems of Section \ref{sec:Ball Density}, the
results on constant generalized curvature curves of Section \ref{sec:CGCC-1},
and the numeric results referred to in Subsection \ref{sub:The-Sector w/density r^p}
and seen in Figure \ref{fig:Transition-from-circular arc to semi-circle-1}.
For a more detailed account of our work, including full proofs and
expanded discussions of these topics, please see the 2009 SMALL Geometry
Group Report \cite{Report}.

\subsection{\label{sub:The-Sector w/density r^p}The Sector with Density $r^{p}$.}

In the plane with density $r^{p}$, Carroll \emph{et al}. \cite[Sect. 4]{Carroll et al - Isoperimetric problem on planes with density}
prove that for $p<-2$, isoperimetric curves are circles about the
origin bounding area on the outside and prove that for $-2\leq p<0$,
isoperimetric regions do not exist. Both of these results generalize
easily to sectors of arbitrary angle (Prop. \ref{pro:Non-existence-pin-2,0}
and the remark after Theorem \ref{thm:Major theorem on MWD}). Dahlberg
\emph{et al}. \cite[Thm. 3.16]{Dahlberg et al - Isoperimetric regions in planes with density r^p}
prove that for $p>0$, isoperimetric curves are circles through the
origin. By a simple symmetry argument (Prop. \ref{pro:cone-sector equivalence for iso. regions}),
isoperimetric circles about the origin and circles through the origin
in the plane correspond to isoperimetric circular arcs about the origin
and semicircles through the origin in a $\pi$-sector. In this paper,
we consider $\theta_{0}$-sectors for general $0<\theta_{0}<\infty$. 

For $p\in\left(-\infty,-2\right)\cup\left(0,\infty\right)$, existence
in the $\theta_{0}$-sector follows from standard compactness arguments
(Prop. \ref{prop:In the circular cone, isoperimetric exist for (inf,-2)U(o,inf)}).
Lemma \ref{lem:Minimizers are circles, semicircles, or unduloids}
limits the possibilities to circular arcs about the origin, semicircles
through the origin, and undularies (nonconstant positive polar graphs
with constant generalized curvature). Proposition \ref{pro:if circle minimizes, it minimizes for all theta less}
shows that if the circle is not uniquely isoperimetric for some angle
$\theta_{0}$, it is not isoperimetric for all $\theta>\theta_{0}$.
Proposition \ref{pro:The-isoperimetric-ratio is increasing for theta_0 < theta_2, constant after, semicircle uniquely}
shows that if the semicircle is isoperimetric for some angle $\theta_{0}$,
it is uniquely isoperimetric for all $\theta>\theta_{0}$. Therefore,
transitional angles $0\leq\theta_{1}\leq\theta_{2}\leq\infty$ exist.
Isoperimetric regions that depend on sector angle have been seen before,
as in the characterization by Lopez and Baker \cite[Thm. 6.1, Fig. 10]{Lopez - The double bubble problem on the cone}
of perimeter-minimizing double bubbles in the Euclidean cone of varying
angles, which is equivalent to the Euclidean sector. Theorem \ref{thm:Major theorem on MWD}
provides estimates on the values of $\theta_{1}$ and $\theta_{2}$.

We conjecture (Conj. \ref{con:Conjecture on major thm for MWD}) that
$\theta_{1}=\pi/\sqrt{p+1}$ and $\theta_{2}=\pi(p+2)/(2p+2)$. Proposition
\ref{pro:Circles are stable until pi/root p+1} proves that the circle
about the origin has positive second variation for all $\theta_{0}<\pi/\sqrt{p+1}$,
and Proposition \ref{pro:semi-circles don't minimize before pi/2*(p+2)/(p+1)}
proves the semicircle through the origin is not isoperimetric for
all $\theta_{0}<\pi(p+2)/(2p+2)$. Using the characterization of constant
generalized curvature curves in Section \ref{sec:CGCC-1}, we wrote
a computer program to predict isoperimetric curves in sectors with
density $r^{p}$, and the results of this program also support our
conjecture as can be seen for $p=1$ in Figure \ref{fig:Transition-from-circular arc to semi-circle-1}.

An easy symmetry argument (Prop. \ref{pro:cone-sector equivalence for iso. regions})
shows that the isoperimetric problem in the $\theta_{0}$-sector is
equivalent to the isoperimetric problem in the $2\theta_{0}$-cone,
a cone over $\mathbb{S}^{1}$. We further note that the isoperimetric
problem in the cone over $\mathbb{S}^{1}$ with density $r^{p}$ is
equivalent to the isoperimetric problem in the cone over the product
of $\mathbb{S}^{1}$ with a $p$-dimensional manifold $M$ among regions
symmetric under a group of isometries acting transitively on $M$.
This provides a classical interpretation of the problem, which we
use in Proposition \ref{pro:when p=00003D1, circles minimize until theta=00003D2}
to obtain an improved bound for $\theta_{1}$ in the $p=1$ case by
taking $M$ to be a rectangular two-torus.

Theorem \ref{thm:Polygon-r^p} classifies isoperimetric regions of
small area in planar polygons with density $r^{p}$. Proposition \ref{pro:MainTheoremPlanePerimeterDensity}
through Corollary \ref{cor:L^p norm inequality} reinterpret Theorem
\ref{thm:Major theorem on MWD} and Conjecture \ref{con:Conjecture on major thm for MWD}
in terms of the plane with differing area and perimeter densities
and in terms of analytic inequalities. 

\begin{figure}
\begin{tabular}{ccc}
\begin{tabular}{c}
\includegraphics[scale=0.5]{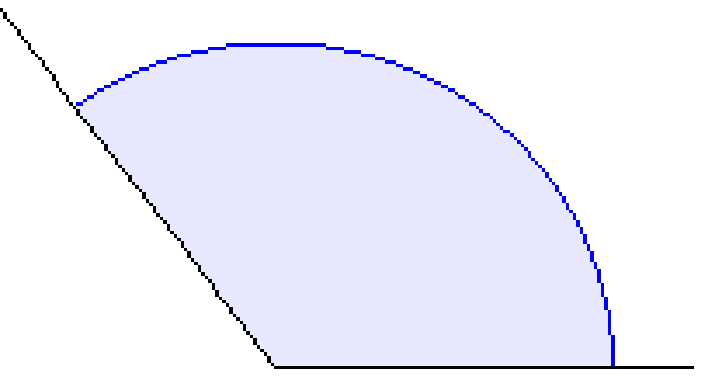}\tabularnewline
\noalign{\vskip0.05in}
$\dfrac{\pi}{\sqrt{2}}-.01$\tabularnewline
\end{tabular} & \begin{tabular}{c}
\includegraphics[scale=0.5]{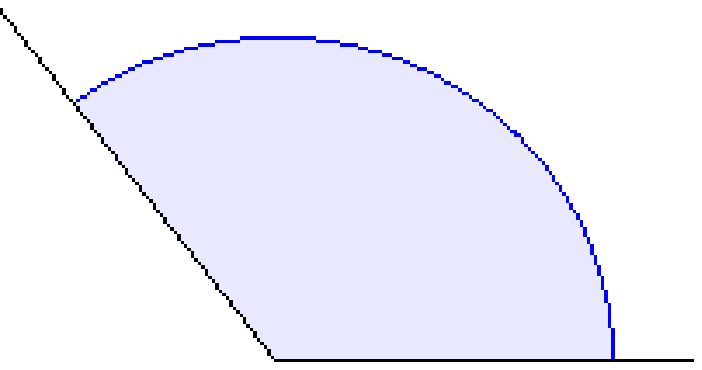}\tabularnewline
\noalign{\vskip0.05in}
$\dfrac{\pi}{\sqrt{2}}$\tabularnewline
\end{tabular} & \begin{tabular}{c}
\includegraphics[scale=0.5]{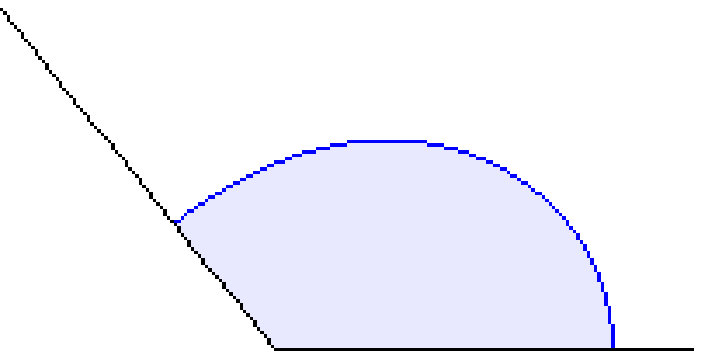}\tabularnewline
\noalign{\vskip0.05in}
$\dfrac{\pi}{\sqrt{2}}+.01$\tabularnewline
\end{tabular}\tabularnewline
\noalign{\vskip0.2in}
\noalign{\vskip0.05in}
\begin{tabular}{c}
\includegraphics[scale=0.5]{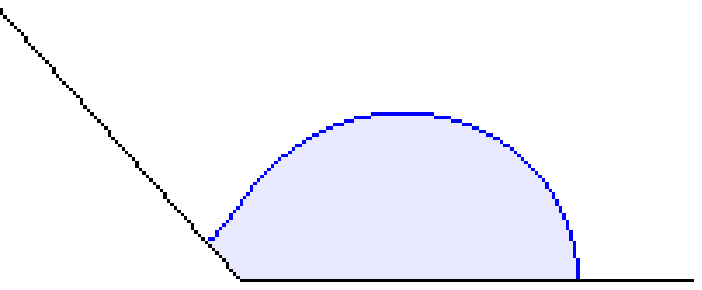}\tabularnewline
\noalign{\vskip0.05in}
$\dfrac{3\pi}{4}-.07$\tabularnewline
\end{tabular} & \begin{tabular}{c}
\includegraphics[scale=0.5]{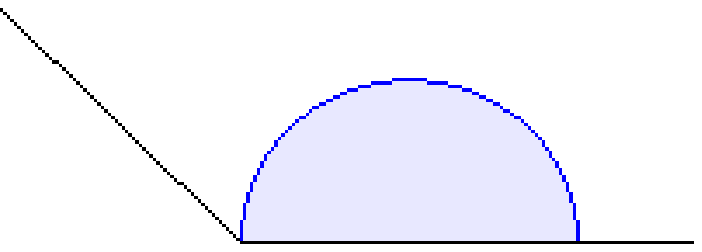}\tabularnewline
\noalign{\vskip0.05in}
$\dfrac{3\pi}{4}$\tabularnewline
\end{tabular} & \begin{tabular}{c}
\includegraphics[scale=0.5]{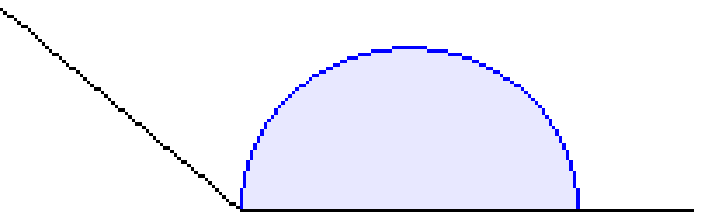}\tabularnewline
\noalign{\vskip0.05in}
$\dfrac{3\pi}{4}+.07$\tabularnewline
\end{tabular}\tabularnewline
\noalign{\vskip0.2in}
\end{tabular}\caption{\label{fig:Transition-from-circular arc to semi-circle-1}Theorem
\ref{thm:Major theorem on MWD} says that there are angles $\theta_{1}$
and $\theta_{2}$ such that isoperimetric curves are circular arcs
until $\theta_{1}$, undularies between $\theta_{1}$ and $\theta_{2}$,
and semicircles after $\theta_{2}$. For $p=1$, Conjecture \ref{con:Conjecture on major thm for MWD}
says that $\theta_{1}=\pi/\sqrt{2}$ and $\theta_{2}=3\pi/4$. Numerical
predictions of isoperimetric regions (above) agree with this conjecture. }

\end{figure}

\subsection{Constant Generalized Curvature Curves}

Section \ref{sec:CGCC-1} provides further details on constant generalized
curvature curves in the sector with density $r^{p}$, which are of
interest since isoperimetric curves must have constant generalized
curvature (see Sect. \ref{sec:Isoperimetric-problems-on-Manifolds-with-Density}).
Proposition \ref{pro:If period of CGCC increasing, then...-1} proves
Conjecture \ref{con:Conjecture on major thm for MWD} under the hypothesis
that undulary periods are bounded by the conjectured values of $\theta_{1}$
and $\theta_{2}$.

\subsection{The Sector with Disk Density}

Section \ref{sec:Ball Density} considers a sector of the plane with
density $a>1$ inside the unit disk and $1$ outside. Ca$\tilde{\text{n}}$ete
\emph{et al.} \cite[Sect. 3.3]{Canete et al - Some isoperimetric problems in planes with density}
consider this problem in the plane, which is equivalent to the $\pi$-sector.
Proposition \ref{pro:anyarea-isosets-are} gives the five possibilities
of Figure \ref{fig:Isoperimetric-sets-for-thetanot-sector}. Our Theorems
\ref{thm:thetalessthanpi}, \ref{thm:Pi<thetanot<aPI}, and \ref{thm:aPI<thetanot}
classify isoperimetric curves in a $\theta_{0}$-sector, depending
on $\theta_{0}$, density $a$, and area.

\subsection{$\mathbb{R}^{n}$ with Radial Density}

Section \ref{sec:R^n w/ radial} considers the isoperimetric problem
in $\mathbb{R}^{n}$ with radial density. We use spherical symmetrization
to reduce the problem to a two-dimensional isoperimetric problem in
a plane with density. Using ideas from Carroll \emph{et al.} \cite[Prop. 4.3]{Carroll et al - Isoperimetric problem on planes with density},
in Theorem \ref{thm:BettaEtAl-R^n} we give a new proof of a result
of Betta \emph{et al. \cite[Thm 4.3]{Betta et al}} on perimeter densities
in $\mathbb{R}^{n}$. We then use the same technique to prove that
hyperspheres about the origin are isoperimetric in $\mathbb{R}^{n}$
with density $r^{p},\quad p<-n$. In Proposition \ref{pro:Minimizers don't exist for -n<p<0 in R^n}
we provide a nonexistence result for $-n\leq p<0$, and we conclude
by conjecturing that hyperspheres through the origin are isoperimetric
for $p>0$ (Conj. \ref{con:R^n with density r^p conjecture}).

\subsection{Open Questions}

We provide here a concise list of open questions related to this paper:
\begin{enumerate}
\item How can one prove Conjecture \ref{con:Conjecture on major thm for MWD}
on the values of the transitional angles $\theta_{1}$ and $\theta_{2}$?
\item Could the values of $\theta_{1}$ and $\theta_{2}$ be proven numerically
for fixed $p$?
\item If a circular arc about the origin is isoperimetric in the $\theta_{0}$-sector
with density $r^{p}$, is it isoperimetric in the same $\theta_{0}$-sector
with density $r^{q}$, $q<p$?
\item Are circles about the origin isoperimetric in the Euclidean plane
with perimeter density $r^{p}$, $p\in(0,1)$? (See Rmk. after Conj.
\ref{con:Conjecture in plane with different densities}.)
\item In the plane with density $r^{p}$, do curves with constant generalized
curvature near that of the semicircle have half period $T\approx\pi\left(p+2\right)/\left(2p+2\right)$?
(See Conj. \ref{con:Conjecture on major thm for MWD} and Rmk. \ref{rem:CGCC remark}
for the corresponding result near the circular arc.) 
\item Are spheres through the origin isoperimetric in $\mathbb{R}^{n}$
with density $r^{p}$, $p>0$? (See Conj. \ref{con:R^n with density r^p conjecture}.)
\end{enumerate}

\subsection{Acknowledgments}

This paper is the work of the 2009 SMALL Geometry Group in Granada,
Spain. We would like to thank the National Science Foundation and
Williams College for funding both SMALL and our trip to Granada. We
would also like to thank the University of Granada's Department of
Geometry and Topology for their work and support. In particular, we
thank Rafael L$\acute{\text{o}}$pez for opening his home and his
office to us. We would like to thank Antonio Ca$\tilde{\mbox{\text{n}}}$ete,
Manuel Ritor$\acute{\text{e}}$, Antonio Ros, and Francisco L$\acute{\text{o}}$pez
for their comments. We thank C$\acute{\text{e}}$sar Rosales for arranging
our wonderful accommodations at Carmen de la Victoria. We thank organizers
Antonio Martínez, Jos$\acute{\text{e}}$ Antonio G$\acute{\text{a}}$lvez,
and Francisco Torralbo of the Escuela de An$\acute{\text{a}}$lisis
Geom$\acute{\text{e}}$trico. Robin Walters of the University of Chicago,
Leonard J. Schulman of CalTech, Casey Douglas of St. Mary’s College,
and Gary Lawlor of BYU all provided helpful insights during our research.
Finally, we would like to thank our research advisor Frank Morgan,
without whom this paper would never have been able to take shape.

\section{\label{sec:Isoperimetric-problems-on-Manifolds-with-Density}Isoperimetric
Problems in Manifolds with Density}

A \emph{density} on a Riemannian manifold is a nonnegative, lower
semicontinuous function $\Psi(x)$ weighting both volume and hypersurface
area. In terms of the underlying Riemannian volume $dV_{0}$ and area
$dA_{0}$, the weighted volume and area are given by $dV=\Psi dV_{0},$
$dA=\Psi dA_{0}$. Manifolds with densities arise naturally in geometry
as quotients of Riemannian manifolds, in physics as spaces with different
mediums, in probability as the famous Gauss space $\mathbb{R}^{n}$
with density $\Psi=ce^{-a^{2}r^{2}}$, and in a number of other places
as well (see \cite{Morgan-Manifolds with Density} or better \cite[Ch. 18]{Morgan - GMT}).

For a curve in a two-dimensional manifold $M$ with density $e^{\psi}$,
we define the generalized curvature $\lambda$ by \[
\lambda=\kappa-\frac{d\psi}{d\mathbf{n}},\]
where $\kappa$ is the usual curvature and $\mathbf{n}$ is the unit
normal vector. This is the correct generalization of curvature to
manifolds with density in that it provides a generalization of variational
formulae \cite[Prop 3.2]{Corwin et al - Differential geometry of manifolds with density}. 

The \emph{isoperimetric problem} on a two-dimensional manifold with
density seeks to enclose a given weighted area with the least weighted
perimeter. As in the Riemannian case, for a smooth density isoperimetric
curves have constant generalized curvature \cite[Prop 4.2]{Corwin et al - Differential geometry of manifolds with density}.
The solution to the isoperimetric problem is known only for a few
manifolds with density including Gauss space (see \cite[Ch. 18]{Morgan - GMT})
and the plane with a handful of different densities (see Betta \emph{et
al. \cite{Betta et al}}, Cañete \emph{et al.} \cite[Sect. 3]{Canete et al - Some isoperimetric problems in planes with density},
Dahlberg \emph{et al.} \cite[Thm 3.16]{Dahlberg et al - Isoperimetric regions in planes with density r^p},
Engelstein \emph{et al.} \cite[Cor. 4.9]{Engelstein et al - Isoperimetric problems on the sphere and on surfaces with density},
Rosales \emph{et al.} \cite[Thm. 5.2]{Rosales et al - On the Isoperimetric Problem in Euclidean Space with Density},
and Maurmann and Morgan \cite[Cor. 2.2]{Maurmann Morgan - Isoperimetric comparison theorems for manifolds with density}).
Existence and regularity are discussed in our report \cite[Sect. 3]{Report},
and the major results are given here.
\begin{prop}
\label{pro:Non-existence-pin-2,0}In the $\theta_{0}$-sector with
density $r^{p}$ for $p\in[-2,0)$, no isoperimetric regions exist. \end{prop}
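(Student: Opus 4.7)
I would follow Carroll et al.'s strategy for the full plane. The plan is to show that for every prescribed weighted area $A_0 > 0$, the infimum of weighted perimeter over admissible regions in the $\theta_0$-sector equals $0$. Since any region of positive weighted area has positive weighted perimeter (the density $r^p$ is positive and locally bounded below away from the origin, and any such region has boundary of positive length meeting a set where the density is bounded below), this infimum cannot be attained, so no isoperimetric solution exists.

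The case $p=-2$ admits a particularly clean argument via scale invariance. Under the dilation $r \mapsto \lambda r$, which preserves $S_{\theta_0}$, a change of variables shows that $\int_\Omega r^{-2}\,dA$ is invariant while $\int_{\partial\Omega} r^{-2}\,ds$ scales by $\lambda^{-1}$. Hence starting from any region of weighted area $A_0$ in $S_{\theta_0}$ (for example, a thin annular sector $\{R_1 \leq r \leq R_2\}$ spanning the full angular range, whose width one tunes so that $\theta_0 \ln(R_2/R_1) = A_0$) and dilating by $\lambda \to \infty$ produces a sequence of competitors with weighted area $A_0$ and weighted perimeter tending to $0$.

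For $p \in (-2, 0)$ I would construct disks moving off to infinity. Place a disk $D_R$ of radius $\rho(R)$ along the angular bisector at distance $R$ from the origin, with $\rho(R)$ chosen so that $D_R$ has weighted area exactly $A_0$. For $\rho \ll R$ the density $r^p$ is nearly constant on $D_R$ at $R^p$, giving weighted area $\approx \pi \rho^2 R^p$ and weighted perimeter $\approx 2\pi \rho R^p$; solving the area equation yields $\rho(R) \sim \sqrt{A_0/\pi}\, R^{-p/2}$, whence the weighted perimeter is asymptotically $2\sqrt{\pi A_0}\, R^{p/2} \to 0$ as $R \to \infty$, since $p<0$. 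The consistency condition $\rho(R) \ll R$, needed both for $D_R$ to lie in $S_{\theta_0}$ and for the approximation to be valid, reduces to $R^{(p+2)/2} \to \infty$, which holds because $p > -2$.

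The principal subtlety (rather than a deep obstacle) is the bookkeeping over what counts as perimeter: by convention the two bounding rays of $S_{\theta_0}$ do not, which is precisely what makes the annular-sector construction for $p=-2$ succeed, the only contributions being the two circular arcs. The asymptotic computations for $p \in (-2,0)$ can be made rigorous using the elementary bounds $\pi\rho^2(R+\rho)^p \leq (\text{weighted area of } D_R) \leq \pi\rho^2(R-\rho)^p$ and the analogous bounds for the boundary integral.
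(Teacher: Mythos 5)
Your proposal is correct and follows essentially the same route as the paper, which simply cites Carroll \emph{et al.}'s construction of curves of arbitrarily low weighted perimeter bounding any prescribed weighted area and notes that it extends immediately to the sector. You have merely written out that construction explicitly (far-away disks for $p\in(-2,0)$, a dilation/annular-sector argument for $p=-2$), and the computations and the consistency condition $\rho\ll R$ check out.
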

\begin{proof}
Carroll \emph{et al.} \cite[Prop. 4.2]{Carroll et al - Isoperimetric problem on planes with density}
prove this in the plane by constructing curves of arbitrarily low
perimeter bounding any area. Their argument extends immediately to
the sector.
\end{proof}
The next theorem gives a general existence condition on isoperimetric
surfaces in manifolds with volume density.
\begin{thm}
\label{The:Existence of minimizers in manifolds with area density satisfying certain properties}Suppose
$M^{n}$ is a smooth connected possibly non-complete $n$-dimensional
Riemannian manifold with isoperimetric function $I$ such that $\lim_{A\rightarrow\infty}I(A)=\infty$.
Suppose furthermore that every closed geodesic ball of finite radius
in $M$ has finite volume and finite boundary area and there is some
$C\in\mathbb{Z}^{+}$ and $x_{0}\in M$ such that the complement of
any closed geodesic ball of finite radius about $x_{0}$ contains
finitely many connected components and at most $C$ unbounded connected
components. Then for standard boundary area and any lower semi-continuous
positive volume density $f$ such that 
\begin{enumerate}
\item for some $x_{0}\in M^{n}$ $\sup\{f(x)|\mathrm{dist}(x,x_{0})>R\}$
goes to $0$ as $R$ goes to $\infty$;
\item for some $\epsilon>0$,$\{x|B(x,\epsilon)\textrm{ is not complete}\}$
has finite weighted volume;
\end{enumerate}
isoperimetric regions exist for any positive volume less than the
volume of $M$.\end{thm}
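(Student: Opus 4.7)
The plan is the direct method. I would take a minimizing sequence $\{E_i\}$ of finite-perimeter sets with weighted volume $V(E_i)=V_0$ and Riemannian perimeter $P_0(E_i)\downarrow P:=\inf\{P_0(E):V(E)=V_0\}$, establish uniform a priori bounds and weighted tightness, extract a subsequence converging in $L^1_{\mathrm{loc}}$ to some set $E$, and then invoke lower semicontinuity to conclude that $E$ realizes the infimum.

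First I would bound the Riemannian volumes $V_0(E_i)$ uniformly. Since the classical Riemannian isoperimetric function of $M$ satisfies $I(V_0(E_i))\le P_0(E_i)$ and $P_0(E_i)$ is bounded, the hypothesis $I(A)\to\infty$ forces $V_0(E_i)\le M^{*}$ uniformly. Weighted tightness then follows immediately from condition~(1): writing $h(R):=\sup\{f(x):\mathrm{dist}(x,x_0)>R\}$, which tends to $0$, one has $V(E_i\cap B(x_0,R)^c)\le h(R)\cdot V_0(E_i\cap B(x_0,R)^c)\le h(R)M^{*}\to 0$ uniformly in $i$ as $R\to\infty$. Because every closed ball has finite Riemannian volume and boundary area and $M$ is locally Euclidean, standard $BV$-compactness on each $\overline{B(x_0,R)}$ combined with a diagonal argument over $R\in\mathbb{N}$ produces a subsequence converging in $L^1_{\mathrm{loc}}$ to a finite-perimeter set $E$. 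Tightness plus local $L^1$ convergence forces $V(E)=V_0$, while lower semicontinuity of Riemannian perimeter yields $P_0(E)\le\liminf P_0(E_i)=P$, so $E$ is the desired minimizer.

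The main obstacle will be ruling out loss of weighted mass, which in this non-complete setting can happen either by escape to infinity or by concentration near the incomplete part of $M$. Escape to infinity is exactly what the tightness argument above handles, which is why both the isoperimetric-function hypothesis and the decay condition~(1) on $f$ are essential. Escape into the incomplete boundary is the role of condition~(2): since $\{x:B(x,\epsilon)\textrm{ not complete}\}$ has finite weighted volume, for any $\delta>0$ it can be fattened to an open neighborhood of weighted volume less than $\delta$, outside of which $M$ is locally complete and $BV$-compactness is unambiguous; any weighted mass ``hidden'' in this neighborhood is bounded by $\delta$ and can thus be made negligible by letting $\delta\to 0$. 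The hypothesis that complements of large balls about $x_0$ have only finitely many components, with at most $C$ unbounded, prevents the minimizing sequence from spreading across infinitely many ends in a pathological way, so the diagonal extraction closes cleanly. Assembling these pieces into a clean $BV$-compactness argument on a possibly non-complete manifold is the delicate step, though the overall structure follows the standard template for complete manifolds with density and should adapt to the present hypotheses without essential new ideas.
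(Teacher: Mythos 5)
Your overall strategy coincides with the paper's: a minimizing sequence, BV-compactness on an exhaustion plus lower semicontinuity, and a reduction to showing that no weighted volume escapes, with escape to infinity ruled out by the hypothesis $\lim_{A\to\infty}I(A)=\infty$ (which bounds Riemannian volume in terms of the bounded perimeters) combined with the decay of $f$, and escape to the incomplete part ruled out by condition (2). Your one genuine misstep is the treatment of condition (2). You claim that, since $N_{\epsilon}=\{x: B(x,\epsilon)\ \textrm{is not complete}\}$ has finite weighted volume, it ``can be fattened to an open neighborhood of weighted volume less than $\delta$.'' Fattening a set can only increase its weighted volume, and finite is not small: if $V(N_{\epsilon})=10$, no neighborhood of $N_{\epsilon}$ has weighted volume below $\delta$. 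As written this step fails, and it is precisely the step where the finiteness in condition (2) must do its work. The correct mechanism is to shrink $\epsilon$ rather than fatten: for $\epsilon'<\epsilon$ one has $N_{\epsilon'}\subseteq N_{\epsilon}$ (a closed ball contained in a complete closed ball is a closed subset of a complete space, hence complete), and $\bigcap_{\epsilon'>0}N_{\epsilon'}=\emptyset$ because every point of a smooth Riemannian manifold has some small closed ball with compact, hence complete, closure. Since $V(N_{\epsilon})<\infty$, continuity from above of the measure gives $V(N_{\epsilon'})\to 0$ as $\epsilon'\to 0$, which is what makes the weighted volume of the minimizing sequence near the incomplete part uniformly negligible; only then does your ``let $\delta\to 0$'' conclusion go through.

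Two smaller points of comparison. First, your uniform bound $V_{0}(E_{i})\le M^{*}$ via $I(V_{0}(E_{i}))\le P_{0}(E_{i})$ tacitly assumes the Riemannian volumes $V_{0}(E_{i})$ are finite (a set of finite weighted volume and finite perimeter need not a priori have finite Riemannian volume, and $I$ is only defined at finite volumes); this deserves a remark or a truncation argument. Second, the paper's sketch does not use a single global volume bound: it isolates the finitely many ends permitted by the component hypothesis and argues end by end, showing in each end that no weighted volume disappears either because that end has finite weighted volume or by the isoperimetric inequality together with $f\to 0$. Your global tightness estimate $V(E_{i}\cap B(x_{0},R)^{c})\le h(R)M^{*}$ is a legitimate streamlining of that step, but then the role you assign to the finitely-many-components hypothesis (``so the diagonal extraction closes cleanly'') is not where that hypothesis actually acts in the paper's argument, and in your version its function should either be identified honestly or the hypothesis acknowledged as not needed for your route.
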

\begin{proof}
Using standard compactness arguments from geometric measure theory
we see that it suffices to show that no weighted volume can escape
outside of an increasing sequence of compact sets whose union is $M^{n}$.
That no weighted volume can escape to points of noncompleteness is
immediate from the second condition on the density. To show that none
can escape to infinity we first isolate a finite number of {}``ends''
of the manifold and then show that in each of these no weighted volume
can disappear either because there is only finite weighted volume
there or because we can use the standard isoperimetric inequality
combined with the density approaching 0. 
\end{proof}
The following proposition establishes an equivalence between various
sectors with possibly differing perimeter and area densities.
\begin{prop}
\label{pro:Power change of coordinates}For any $n\in\mathbb{R}\backslash\{0\}$
the $\theta_{0}$-sector with perimeter density $r^{p}$ and area
density $r^{q}$ is equivalent to the $|n|\theta_{0}$-sector with
perimeter density $r^{[(p+1)/n]-1}$ and area density $r^{[(q+2)/n]-2}$.\end{prop}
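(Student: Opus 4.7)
The plan is to construct an explicit diffeomorphism $\phi$ between the two sectors and show that under $\phi$ the weighted perimeter and weighted area of any region scale by constants (depending only on $n$). Since isoperimetric problems are invariant under rescaling perimeter and area by constants, the two isoperimetric problems will be equivalent.

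Concretely, I would define $\phi\colon (r,\theta)\mapsto(\tilde r,\tilde\theta)=(r^n,n\theta)$ when $n>0$, and $\phi\colon (r,\theta)\mapsto (r^n,-n\theta)$ (composed with a reflection to preserve orientation) when $n<0$. In either case $\phi$ is a smooth diffeomorphism from the punctured $\theta_0$-sector onto the punctured $|n|\theta_0$-sector, extending continuously at the origin.

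Next I would compute how the area and arc-length elements transform. Using $r=\tilde r^{1/n}$, $dr=(1/n)\tilde r^{1/n-1}\,d\tilde r$, and $d\theta=d\tilde\theta/n$, direct substitution into the weighted area element gives
\[
r^{q}\cdot r\,dr\,d\theta \;=\; \tfrac{1}{n^{2}}\,\tilde r^{\,(q+2)/n-1}\,d\tilde r\,d\tilde\theta \;=\; \tfrac{1}{n^{2}}\,\tilde r^{\,\tilde q}\cdot\tilde r\,d\tilde r\,d\tilde\theta,
\]
where $\tilde q=(q+2)/n-2$, which is the new area-density exponent. For perimeter, pulling back the Euclidean metric $dr^{2}+r^{2}\,d\theta^{2}$ through $\phi^{-1}$ and factoring yields
\[
dr^{2}+r^{2}\,d\theta^{2}\;=\;\frac{\tilde r^{\,2/n-2}}{n^{2}}\bigl(d\tilde r^{2}+\tilde r^{2}\,d\tilde\theta^{2}\bigr),
\]
so $ds=\tfrac{1}{|n|}\tilde r^{\,1/n-1}\,d\tilde s$. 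Consequently the weighted perimeter element transforms as
\[
r^{p}\,ds\;=\;\tfrac{1}{|n|}\,\tilde r^{\,(p+1)/n-1}\,d\tilde s\;=\;\tfrac{1}{|n|}\,\tilde r^{\,\tilde p}\,d\tilde s,
\]
with $\tilde p=(p+1)/n-1$, the new perimeter-density exponent.

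Since $\phi$ multiplies weighted area by $1/n^{2}$ and weighted perimeter by $1/|n|$, a region $E$ in the $\theta_0$-sector is isoperimetric for area $A$ under the original densities if and only if $\phi(E)$ is isoperimetric for area $A/n^{2}$ under the new densities in the $|n|\theta_0$-sector; hence the two isoperimetric problems are equivalent. The only mildly subtle point is the bookkeeping for $n<0$, handled by a reflection so that $\phi$ remains a sector-preserving diffeomorphism; no genuine obstacle arises, and I would expect the entire proof to amount to the two substitution calculations above.
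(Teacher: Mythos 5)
Your proposal is correct and is essentially the paper's own proof: the paper's one-line argument is the change of coordinates $w=z^{n}/n$, which in polar form is exactly your map $(r,\theta)\mapsto(r^{n},n\theta)$ up to a harmless constant dilation, and your substitution computations make explicit why the exponents transform as stated. The constant factors $1/|n|$ and $1/n^{2}$ on weighted perimeter and area (and the reflection for $n<0$) are immaterial to the equivalence, just as you say.
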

\begin{proof}
Make the change of coordinates $w=z^{n}/n$.
\end{proof}
The next two propositions give the existence and regularity of isoperimetric
curves in the main cases that we examine in the rest of the paper. 
\begin{prop}
\label{prop:In the circular cone, isoperimetric exist for (inf,-2)U(o,inf)}In
the $\theta_{0}$-sector with density $r^{p}$ for $p\in(-\infty,-2)\cup(0,\infty)$
isoperimetric curves exist and are smooth curves with constant generalized
curvature.\end{prop}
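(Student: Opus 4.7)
The plan is to reduce existence to Theorem \ref{The:Existence of minimizers in manifolds with area density satisfying certain properties} via the power change of coordinates in Proposition \ref{pro:Power change of coordinates}. Taking $n = p+1$ (nonzero since $p \in (-\infty,-2) \cup (0,\infty)$), the $\theta_0$-sector with density $r^p$ becomes a $|p+1|\theta_0$-sector with unit perimeter density and area density $r^{-p/(p+1)}$. The new exponent is negative in both regimes (lying in $(-1,0)$ when $p>0$ and in $(-2,-1)$ when $p<-2$), so the transformed area density tends to $0$ at infinity, matching hypothesis $(1)$ of Theorem \ref{The:Existence of minimizers in manifolds with area density satisfying certain properties}.

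Verifying the remaining hypotheses for the transformed sector is straightforward. As an underlying Euclidean manifold the sector satisfies $\lim_{A\to\infty}I(A)=\infty$ by the classical Euclidean isoperimetric inequality, its complement of any closed ball has at most one unbounded component, and closed Euclidean balls have finite Euclidean area and length. Hypothesis $(2)$ (finite weighted volume on the non-complete set) is verified near the origin by the convergence of $\int_0^\epsilon r^{-p/(p+1)+1}\,dr$, which holds in both regimes since $-p/(p+1) > -2$. Applying Theorem \ref{The:Existence of minimizers in manifolds with area density satisfying certain properties} yields an isoperimetric region in the transformed sector, and pulling back through Proposition \ref{pro:Power change of coordinates} gives existence in the original.

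For smoothness and constant generalized curvature, I would appeal to standard regularity theory for manifolds with smooth density, together with the first-variation computation of Corwin \emph{et al.} \cite{Corwin et al - Differential geometry of manifolds with density} cited earlier in the excerpt. The density $r^p$ is smooth on the sector minus the origin, so any minimizing arc disjoint from the origin is smooth with constant generalized curvature. For $p<-2$ the density blows up at $0$, so weighted-perimeter estimates force minimizers away from the origin entirely. For $p>0$ the density vanishes at $0$, but a minimizer meeting the origin does so transversely (as with the semicircle through $0$), so its image in the ambient plane is still a smooth curve.

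The main obstacle is the compactness step in the reduction: one must check that hypothesis $(2)$ of Theorem \ref{The:Existence of minimizers in manifolds with area density satisfying certain properties} is satisfied at the origin of the transformed sector, regarded as a non-complete cone-point, and that the density-decay argument in the proof of that theorem really does prevent weighted volume from escaping to infinity. Once these checks are in place the regularity conclusion follows from standard results already cited, which is presumably why the paper defers a full account to the SMALL report \cite{Report}.
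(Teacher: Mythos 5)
Your overall strategy matches the paper's first step: use Proposition \ref{pro:Power change of coordinates} with $n=p+1$ to pass to a sector with Euclidean perimeter and area density $r^{q}$, $q=-p/(p+1)\in(-2,0)$, and then invoke Theorem \ref{The:Existence of minimizers in manifolds with area density satisfying certain properties}. But there is a genuine gap in how you apply that theorem: you apply it directly to the transformed \emph{sector}, and the sector is not a smooth manifold in the sense the theorem requires --- it has boundary along its two edge rays. If you instead delete the boundary to force it into the theorem's framework, then the set $\{x\,|\,B(x,\epsilon)\text{ is not complete}\}$ in hypothesis $(2)$ is not just a neighborhood of the origin: it contains an $\epsilon$-strip along each edge ray going out to infinity, and with area density $r^{q}$, $q\in(-1,0)$ (the case $p>0$), that strip has infinite weighted area, so hypothesis $(2)$ fails. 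Your ``main obstacle'' paragraph only checks completeness at the cone point and misses the edges entirely. The paper's proof avoids this precisely by inserting the doubling step: by Proposition \ref{pro:cone-sector equivalence for iso. regions} the sector problem is equivalent to the problem in the cone of twice the angle, and the cone with its vertex deleted is a smooth boundaryless manifold whose only non-complete locus is the vertex, where your convergence computation $\int_0^{\epsilon}r^{q+1}\,dr<\infty$ does apply. That doubling is the missing idea, not a technicality.

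The same omission affects your regularity discussion. On the doubled cone, interior regularity (Morgan's regularity theorem) plus standard variational arguments give smoothness and constant generalized curvature everywhere, and in particular the behavior of the curve where it meets the sector edges comes for free from smoothness of the doubled curve. Working only in the sector, you would need a separate free-boundary regularity argument at the edge rays, which your sketch does not address; your remarks about the origin (density blowing up for $p<-2$, transverse passage for $p>0$) are plausible but are not needed once one works on the cone minus the vertex and cites the regularity reference, which is what the paper does.
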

\begin{proof}
By Proposition \ref{pro:Power change of coordinates} the $\theta_{0}$-sector
with density $r^{p}$ is equivalent to a $(p+1)\theta_{0}$-sector
with Euclidean perimeter and area density $r^{q}$ for $q\in(-2,0)$.
As we will see in Proposition \ref{pro:cone-sector equivalence for iso. regions},
the isoperimetric problem in the sector with radial density is equivalent
to the isoperimetric problem in a circular cone of twice the angle
with the same radial density. Furthermore it suffices to consider
the cone with the vertex deleted, which is a smooth manifold. Existence
then follows from Theorem \ref{The:Existence of minimizers in manifolds with area density satisfying certain properties},
and smoothness from Morgan \cite[Cor. 3.8, Sect. 3.10]{Morgan - Regularity of isoperimetric hypersurfaces in Riemannian manifolds}.
Constant generalized curvature follows from variational arguments. 
\end{proof}
\begin{figure}
\includegraphics{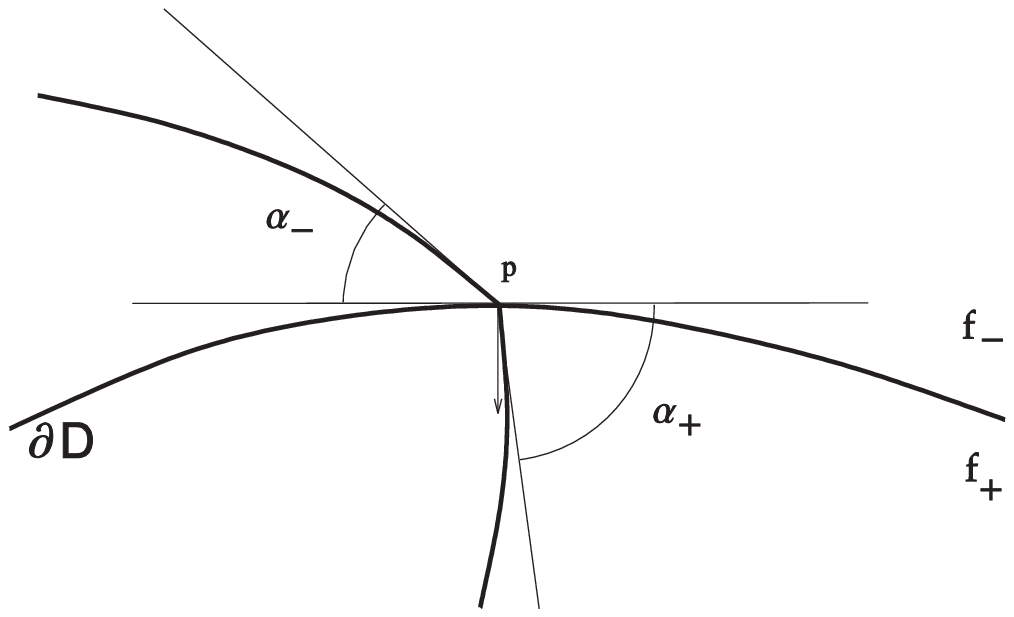}

\caption{\label{fig:Snell's law figure}The Snell refraction rule for curves
passing through the boundary of the unit disk. \cite[Fig. 1]{Canete et al - Some isoperimetric problems in planes with density},
used by permission, all rights reserved.}

\end{figure}

\begin{prop}
\label{pro:Isoperimetric exist in unit ball problem}In the $\theta_{0}$-sector
with density $a>1$ inside the unit disk $D$ and 1 outside, isoperimetric
curves exist for any given area. These isoperimetric curves are smooth
except at the boundary of $D$, where they obey the following Snell
refraction rule (see Fig. \ref{fig:Snell's law figure}):\[
\frac{\cos\alpha_{+}}{\cos\alpha_{-}}=\frac{1}{a},\]
where $a_{+}$ is the angle of intersection from inside of $D$ and
$\alpha_{-}$ is the angle of intersection from outside of $D$. \end{prop}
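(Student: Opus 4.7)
The plan is to adapt the argument of Ca$\tilde{\text{n}}$ete \emph{et al.} \cite[Sect. 3.3]{Canete et al - Some isoperimetric problems in planes with density}, who handle the case $\theta_{0}=\pi$, to arbitrary sector angle $\theta_{0}>0$. The proof has three parts: existence of a minimizer, smoothness away from $\partial D$, and derivation of the Snell refraction rule at $\partial D$.

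For existence, take a minimizing sequence $\{E_{n}\}$ of prescribed weighted area $V$. Since $1\le\Psi\le a$, weighted perimeter controls Euclidean perimeter, so standard $BV$-compactness yields a subsequential local limit $E$ of locally finite perimeter with $V_{0}:=|E|_{\Psi}\le V$. If $V_{0}<V$, then weighted area $V-V_{0}$ escapes to infinity; because $\Psi\equiv 1$ outside $D$, the Euclidean isoperimetric inequality in the sector bounds the perimeter of the escaping piece below by that of a Euclidean isoperimetric region of weighted area $V-V_{0}$. Placing such a region far out in the sector and appending it to $E$ produces a competitor of weighted area $V$ whose weighted perimeter is at most $\liminf_{n}P_{\Psi}(E_{n})$ by lower semicontinuity, hence attains the infimum. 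For smoothness off $\partial D$, on $\mathrm{int}(D)$ and on $\mathbb{R}^{2}\setminus\overline{D}$ the density is locally constant, so a minimizer is locally a Euclidean area-constrained perimeter minimizer, and classical regularity (Morgan \cite[Cor. 3.8, Sect. 3.10]{Morgan - Regularity of isoperimetric hypersurfaces in Riemannian manifolds}) gives $C^{\infty}$ smoothness of $\partial E$ off $\partial D$ and off the sides of the sector.

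For the Snell rule, consider a transversal crossing $q\in\partial E\cap\partial D$ with interior angle $\alpha_{+}$ and exterior angle $\alpha_{-}$, each measured from $\partial D$. Slide $q$ a distance $t$ along $\partial D$ while pinning $\partial E$ outside a small neighborhood of $q$. To first order in $t$, the interior arclength changes by $-\cos\alpha_{+}\cdot t$ and the exterior by $+\cos\alpha_{-}\cdot t$, so the first variation of weighted perimeter equals $(\cos\alpha_{-}-a\cos\alpha_{+})\,t$. The induced first-order area change is absorbed by a compactly supported normal variation elsewhere, whose contribution is captured by a Lagrange multiplier proportional to the constant generalized curvature. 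Setting the corrected first variation to zero forces $\cos\alpha_{-}=a\cos\alpha_{+}$, i.e.\ $\cos\alpha_{+}/\cos\alpha_{-}=1/a$. The main obstacle is the existence step, where one must rule out loss of weighted area to infinity in an unbounded sector; the argument goes through for every $\theta_{0}$ because the density is identically Euclidean far from the origin, so the escaping piece can be reinserted in the far field at no first-order cost in weighted perimeter.
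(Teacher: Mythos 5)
Your outline is sound, but it is worth knowing that the paper's own proof is essentially a citation: existence is quoted from Ca\~nete \emph{et al.} (their Theorem 3.18, whose proof carries over verbatim to the $\theta_{0}$-sector), and both the regularity away from $\partial D$ and the Snell refraction rule are quoted from their Proposition 2.14. What you have written is, in effect, a self-contained reconstruction of those cited arguments: $BV$-compactness plus a ``no mass escapes to infinity'' argument using that the density is Euclidean outside $D$, local regularity from the density being locally constant off $\partial D$, and the Snell law from a first variation sliding the junction point along $\partial D$ (your sign bookkeeping gives $a\cos\alpha_{+}=\cos\alpha_{-}$, which is the stated rule). The one place where your sketch is thinner than a full proof is the existence step: lower semicontinuity alone does not let you simply ``append'' the escaping area to the limit at no cost; you need the standard cut-and-paste estimate (truncating $E_{n}$ by circles of small length at large radius) to show that the perimeter carried off to infinity is asymptotically at least the Euclidean sector-isoperimetric value for the escaping area, and for $\theta_{0}>\pi$ one should note that the relevant lower bound $P\geq c\sqrt{A}$ in the (nonconvex) sector still holds, with the optimal far-field competitor being a half-disk on an edge. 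Similarly, in the Snell argument the area lost when sliding the junction is of order $t\delta$ ($\delta$ the size of the pinned neighborhood), so the Lagrange-multiplier correction vanishes relative to the $O(t)$ junction term only after letting $\delta\to0$; this should be said explicitly. With those details filled in, your direct argument is a valid alternative to the paper's citation-based proof, and it buys a proof that is independent of reading Ca\~nete \emph{et al.}, at the cost of redoing their compactness and first-variation computations.
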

\begin{proof}
Cañete \emph{et al. }\cite[Thm. 3.18]{Canete et al - Some isoperimetric problems in planes with density}
prove existence for the plane. The same result and proof hold for
the $\theta_{0}$-sector\emph{.} The regularity and Snell refraction
rule follow from\emph{ }\cite[Prop. 2.14]{Canete et al - Some isoperimetric problems in planes with density}. 
\end{proof}

\section{Isoperimetric Regions in Sectors with Density\label{sec:Isoperimetric-Regions-in-Sectors-with-Density}}

We study properties of isoperimetric regions in planar sectors with
radial densities. Proposition \ref{pro:cone-sector equivalence for iso. regions}
shows there is a one-to-one correspondence between isoperimetric curves
in the $\theta_{0}$-sector and in the $2\theta_{0}$-cone, modulo
rotations. Propositions \ref{des:Proposition - isoperimetric curves meet boundary perpendicular-1}
and \ref{pro:Isoperimetric curves have strictly positive/negative derivative}
provide some regularity results.
\begin{lem}
\label{lem:there exist rays dividing iso. regions. in cone}Given
an isoperimetric region in the $2\theta_{0}$-cone with density $f(r)$,
there exist two rays from the origin separated by an angle of $\theta_{0}$
that divide both the area and perimeter of the region in half. \end{lem}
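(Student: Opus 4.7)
The plan is to first find an area-bisecting pair of rays by a standard intermediate value argument, and then to exploit the isoperimetric minimality of $E$ together with a reflection in the cone to force the same pair to bisect the perimeter.

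For the first step, parametrize pairs of rays separated by angle $\theta_{0}$ by a single parameter $\theta \in [0,\theta_{0}]$, where the pair consists of the rays $R_{\theta}$ and $R_{\theta+\theta_{0}}$ in the $2\theta_{0}$-cone. Such a pair divides the cone into two sectors $S_{1}(\theta)$ and $S_{2}(\theta)$ of angular width $\theta_{0}$ each. Set $g(\theta) = \operatorname{area}(E\cap S_{1}(\theta)) - \operatorname{area}(E\cap S_{2}(\theta))$, where area is computed with respect to the radial density $f(r)$. The function $g$ is continuous in $\theta$, and replacing $\theta$ by $\theta+\theta_{0}$ swaps the roles of $S_{1}$ and $S_{2}$, so $g(\theta_{0}) = -g(0)$. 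The intermediate value theorem supplies some $\theta^{*}\in[0,\theta_{0}]$ with $g(\theta^{*}) = 0$, i.e.\ each sector contains half the weighted area of $E$.

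The key geometric observation is that the reflection $\sigma$ across $R_{\theta^{*}}$ is an isometry of the $2\theta_{0}$-cone (it preserves radial distance, and the density depends only on $r$), and because the two rays $R_{\theta^{*}}$ and $R_{\theta^{*}+\theta_{0}}$ are antipodal on the $S^{1}$ cross-section of circumference $2\theta_{0}$, the reflection $\sigma$ fixes \emph{both} rays pointwise and interchanges $S_{1}$ with $S_{2}$. Let $E_{1} = E\cap S_{1}(\theta^{*})$ and form the symmetrized region $\tilde E = E_{1}\cup \sigma(E_{1})$. By density-preservation, the weighted area of $\tilde E$ is $2\cdot A/2 = A$, matching that of $E$. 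On the other hand, $\tilde E$ is $\sigma$-invariant, so on each of the rays $R_{\theta^{*}}$ and $R_{\theta^{*}+\theta_{0}}$ the two sides of $\tilde E$ match up and contribute no perimeter; the entire perimeter of $\tilde E$ is twice the weighted length of $\partial E$ lying strictly inside $S_{1}(\theta^{*})$. Taking $E_{1}$ to be the side along which $\partial E$ has the smaller interior length (relabeling if necessary) gives $P(\tilde E) \le P(E)$, with equality if and only if $\partial E$ has equal weighted length in $S_{1}(\theta^{*})$ and $S_{2}(\theta^{*})$. By the isoperimetric minimality of $E$ at area $A$, equality must hold, so the rays $R_{\theta^{*}}$ and $R_{\theta^{*}+\theta_{0}}$ bisect the perimeter as well.

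The main obstacle is the non-generic case in which $\partial E$ shares a positive-length segment with $R_{\theta^{*}}$ or $R_{\theta^{*}+\theta_{0}}$; in that situation the clean splitting $P(E) = P_{1}^{\mathrm{int}}(\theta^{*})+P_{2}^{\mathrm{int}}(\theta^{*})$ fails and one must account separately for mass carried on the dividing rays. This can be resolved either by a perturbation argument—since $\partial E$ has finite length, it can coincide with $R_{\theta}$ on positive length for at most countably many $\theta$, so the zero set of $g$ (a closed set meeting $[0,\theta_{0}]$) contains a \textquotedblleft good\textquotedblright{} $\theta^{*}$, or an approximating sequence of good values can be chosen—or by a direct measure-theoretic argument working with the essential boundary $\partial^{*}E$, under which the preceding perimeter accounting carries over verbatim.
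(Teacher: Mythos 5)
Your proposal is correct and follows essentially the same route as the paper: an intermediate value argument produces a pair of rays bisecting the weighted area, and then reflecting the half with smaller boundary length across the rays yields a competitor of equal area and no more perimeter, so minimality forces the perimeter to be split evenly as well. Your extra care with the degenerate case where $\partial E$ lies along a dividing ray on a set of positive length is a refinement the paper's one-line reflection argument silently skips, but it does not change the underlying approach.
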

\begin{proof}
First we show that there are two rays that separate the area of the
region in half. Take any two rays from the origin separated by an
angle of $\theta_{0}$. Rotate them, keeping an angle of $\theta_{0}$
between them. The areas bounded by the rays and the boundary of the
isoperimetric region vary continuously as we rotate, and so does their
difference. Let the initial difference in areas be $\Delta A$. When
we rotate the rays by an angle of $\theta_{0}$, the difference in
areas is $-\Delta A$. So, there is a point where the difference is
$0$, implying that there are two rays separated by an angle $\theta_{0}$
that divide the area in half. If one side had less perimeter than
the other, we could reflect it to obtain a region with the same area
and less perimeter than our original region, violating the assumption
that it is isoperimetric.\end{proof}
\begin{prop}
\label{pro:cone-sector equivalence for iso. regions}An isoperimetric
region of area $2A$ in the cone of angle $2\theta_{0}$ with density
$f(r)$ has perimeter equal to the twice the perimeter of an isoperimetric
region of area $A$ in the sector of angle $\theta_{0}$ with density
$f(r)$. Indeed, the operation of doubling a sector to form a cone
provides a one-to-one correspondence between isoperimetric regions
in the sector and isoperimetric regions in the cone, modulo rotations.\end{prop}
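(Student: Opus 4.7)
My plan is to establish the equivalence by moving back and forth between a sector region and its reflected double in the cone, using Lemma~\ref{lem:there exist rays dividing iso. regions. in cone} to pass from the cone to the sector and plain reflection (which preserves weighted area and perimeter because the density $f(r)$ is radial) to pass from the sector to the cone.

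First I would handle the cone-to-sector direction. Let $R$ be an isoperimetric region of area $2A$ and perimeter $2P$ in the $2\theta_0$-cone. By Lemma~\ref{lem:there exist rays dividing iso. regions. in cone}, there exist two rays from the origin making an angle $\theta_0$ that split both the area and the perimeter of $R$ in half. Either half, when viewed as a subset of a $\theta_0$-sector bounded by those two rays, has area $A$ and perimeter $P$ (the portion of $\partial R$ lying on the rays themselves is part of the sector boundary and so is not counted). If this halved region were not isoperimetric in the $\theta_0$-sector, there would be a competitor $S$ of area $A$ with perimeter strictly less than $P$; reflecting $S$ across one of the bounding rays and gluing would produce a region in the cone of area $2A$ with perimeter strictly less than $2P$, contradicting the isoperimetric property of $R$.

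For the sector-to-cone direction I would run the same argument in reverse. Given an isoperimetric sector region $S$ of area $A$ and perimeter $P$, let $\tilde{S}$ be its double obtained by reflecting across one boundary ray. Then $\tilde{S}$ has weighted area $2A$ and weighted perimeter $2P$. If $\tilde{S}$ were not isoperimetric in the cone, there would be a region $R'$ of area $2A$ in the cone with perimeter strictly less than $2P$; applying Lemma~\ref{lem:there exist rays dividing iso. regions. in cone} to $R'$ (or, if $R'$ is not itself isoperimetric, to an isoperimetric region of that area, which has even less perimeter) would produce a sector region of area $A$ with perimeter strictly less than $P$, contradicting the optimality of $S$. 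Combining the two directions, the perimeter of an isoperimetric region of area $2A$ in the cone equals twice the perimeter of an isoperimetric region of area $A$ in the sector, and doubling and halving are mutually inverse operations between the two families of isoperimetric regions modulo rotation.

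The main point requiring care is the bookkeeping in the halving step: one must check that the portions of $\partial R$ lying on the two bisecting rays contribute to the sector boundary rather than to the weighted perimeter, so that the halved region genuinely has perimeter $P$ (rather than more). This is built into the statement of Lemma~\ref{lem:there exist rays dividing iso. regions. in cone}, which splits the perimeter of $R$ itself in half, and it is the only subtlety; everything else is a routine reflection/contradiction argument relying on the radial symmetry of the density.
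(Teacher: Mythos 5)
Your proposal is correct and is essentially the paper's own argument: double a sector region by reflection to get a cone competitor, and halve a cone region by the two rays of Lemma~\ref{lem:there exist rays dividing iso. regions. in cone}, deriving a contradiction in each direction. The only small imprecision is in the sector-to-cone direction, where you invoke the Lemma on the competitor $R'$ (which need not be isoperimetric) or on an isoperimetric region of that area (whose existence is not yet known); the paper avoids this by simply bisecting the area of $R'$ by two rays as in the Lemma's proof and taking the side with at most half the perimeter, which needs no optimality or existence assumption.
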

\begin{proof}
Given an isoperimetric region in the sector, take its reflection into
the cone to obtain a region in the cone with twice the area and twice
the perimeter. This region must be isoperimetric for the cone, for
if there were a region in the cone with less perimeter for the same
area we could divide its area in half by two rays separated by an
angle $\theta_{0}$ as in Lemma \ref{lem:there exist rays dividing iso. regions. in cone},
take the side with at most half the perimeter to obtain a region in
the $\theta_{0}$-sector with the same area and less perimeter than
our original isoperimetric region. 

Conversely, given an isoperimetric region in the cone, divide its
area and perimeter in half by the two rays described in Lemma \ref{lem:there exist rays dividing iso. regions. in cone}.
Both regions must be isoperimetric in the sector, for if there were
a region with less perimeter for the same area, taking its double
would yield a region in the cone with less perimeter than our original
region for the same area.
\end{proof}
We shall find many occasions to use the following simple generalization
of a proposition of Dahlberg \emph{et al.}
\begin{prop}
\label{pro:CGCC symmetric about crit pts} Consider a circular cone
with smooth radial density $e^{\psi(r)}.$ A constant generalized
curvature curve is symmetric under reflection across every line through
the origin and a critical point of the radius function, in the sense
that if $\gamma$ is an arclength parameterization of the curve and
$s_{0}$ is a critical point of the curve's radius then for all $s$
such that $\gamma(s_{0}+s)$ and $\gamma(s_{0}-s)$ are defined we
have $\gamma(s_{0}+s)=R(\gamma(s_{0}-s))$ where $R$ is reflection
about that line. \end{prop}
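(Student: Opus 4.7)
The plan is to exhibit the candidate reflected curve $\tilde\gamma(s) = R(\gamma(2s_0 - s))$ and use uniqueness of solutions to the second-order ODE satisfied by constant generalized curvature curves. If I can show that $\tilde\gamma$ is itself an arclength-parameterized constant generalized curvature curve with the same $\lambda$, and that $\tilde\gamma(s_0) = \gamma(s_0)$ and $\tilde\gamma'(s_0) = \gamma'(s_0)$, then ODE uniqueness forces $\tilde\gamma \equiv \gamma$ on the common domain, which after the substitution $s = s_0 + t$ is exactly the claimed symmetry $\gamma(s_0 + t) = R(\gamma(s_0 - t))$.

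For the initial conditions, let $L$ denote the line through the origin and $\gamma(s_0)$. Because $s_0$ is a critical point of $r(s) = |\gamma(s)|$, differentiating $|\gamma|^2$ gives $\gamma(s_0)\cdot\gamma'(s_0) = 0$, so the unit tangent at $s_0$ is perpendicular to the radial direction, hence perpendicular to $L$. Since $\gamma(s_0) \in L$, the reflection $R$ fixes it, giving $\tilde\gamma(s_0) = \gamma(s_0)$. Differentiating $\tilde\gamma(s) = R(\gamma(2s_0 - s))$ and using that $R$ negates any vector perpendicular to its axis, we get $\tilde\gamma'(s_0) = -R(\gamma'(s_0)) = -(-\gamma'(s_0)) = \gamma'(s_0)$.

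Next I would check that $\tilde\gamma$ has constant generalized curvature equal to $\lambda$. Because $R$ is a Euclidean isometry fixing the origin and the density $e^{\psi(r)}$ is radial, $R$ preserves both the Riemannian metric and $\psi$, so it carries constant generalized curvature curves to constant generalized curvature curves. A reflection reverses orientation and so flips the sign of the signed quantity $\lambda = \kappa - d\psi/d\mathbf{n}$; reparameterizing by $s \mapsto 2s_0 - s$ also reverses orientation and flips the sign again. The two sign changes cancel, so $\tilde\gamma$ has generalized curvature exactly $\lambda$. (Alternatively, writing the curve equation $\gamma'' = (\lambda + d\psi/d\mathbf{n})\mathbf{n}$ as a second-order autonomous system in Cartesian coordinates, one checks directly that $\tilde\gamma$ satisfies the same system because $\psi\circ R = \psi$ and $R$ is linear.)

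With matching $\lambda$ and matching position and tangent at $s_0$, standard ODE uniqueness for this second-order system finishes the argument. The only real subtlety is bookkeeping the sign conventions for $\kappa$ and $d\psi/d\mathbf{n}$ under the orientation-reversing operations, but the key observation is that reflection composed with parameter reversal is orientation-preserving, so $\lambda$ is genuinely preserved, not merely preserved up to sign.
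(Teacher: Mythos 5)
Your proposal is correct and is essentially the paper's own argument: the paper, following Dahlberg \emph{et al.} (Lem.\ 2.1), simply invokes uniqueness of solutions to the constant generalized curvature ODE, which is exactly the reflection-plus-parameter-reversal comparison you carry out. Your write-up just supplies the details (matching initial data at the critical point and the sign bookkeeping showing $\lambda$ is preserved) that the paper leaves to the reader.
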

\begin{proof}
As in \emph{\cite[Lem. 2.1]{Dahlberg et al - Isoperimetric regions in planes with density r^p}},
this follows from the uniqueness of solutions to ordinary differential
equations applied to the constant curvature equation. \end{proof}
\begin{prop}
\textbf{\label{des:Proposition - isoperimetric curves meet boundary perpendicular-1}
}In the $\theta_{0}$-sector with smooth density $f(r)$, isoperimetric
curves meet the boundary perpendicularly.\end{prop}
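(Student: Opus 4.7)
The plan is to give the standard first-variation argument. Let $\gamma$ be an isoperimetric curve and let $p$ be an endpoint of $\gamma$ lying on the interior of one of the two rays bounding the sector, so $p \neq 0$ and $f(|p|) > 0$. The idea is that if $\gamma$ fails to meet the ray perpendicularly at $p$, I can construct a variation of $\gamma$ that slides the endpoint along the boundary ray while preserving weighted area (by a small interior adjustment), and show that the first-order change in weighted perimeter is nonzero, contradicting the isoperimetric property of $\gamma$.

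To make this rigorous, I would apply the standard first variation formula for weighted length on a manifold with density $f = e^\psi$. For a variation field $X$ along $\gamma$, one has
\[
\delta L_f \;=\; -\int_\gamma \lambda\, f\, \langle X, \mathbf{n}\rangle\, ds \;+\; \bigl[f\, \langle X, \tau\rangle\bigr]_{\partial\gamma},
\]
where $\lambda = \kappa - d\psi/d\mathbf{n}$ is the generalized curvature, $\tau$ is the unit tangent, and $\mathbf{n}$ the unit normal; see the discussion in Section~\ref{sec:Isoperimetric-problems-on-Manifolds-with-Density} and Corwin \emph{et al.} The weighted area variation is $\delta A_f = \int_\gamma f\, \langle X, \mathbf{n}\rangle\, ds$ up to sign. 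By Proposition \ref{prop:In the circular cone, isoperimetric exist for (inf,-2)U(o,inf)} (or the general variational principle) the isoperimetric curve has constant generalized curvature $\lambda$, and this $\lambda$ is exactly the Lagrange multiplier for the area constraint; hence the interior integrals cancel in $\delta L_f - \lambda\, \delta A_f$, leaving only the boundary term.

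Now I would specify an admissible variation. Take $X$ smooth, compactly supported in a small neighborhood of $p$, with $X(p)$ equal to any chosen tangent vector $T$ to the boundary ray. Such a variation keeps the endpoint on the boundary to first order, and can be corrected by a disjoint interior perturbation to be area-preserving to first order. Criticality of $\gamma$ then forces
\[
f(p)\,\langle T, \tau(p)\rangle \;=\; 0
\]
for every $T$ tangent to the ray. Since $f(p) > 0$ and $T$ ranges over the entire one-dimensional tangent space to the ray, $\tau(p)$ must be perpendicular to the ray. The same argument applies to the other endpoint.

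The only delicate case is when an endpoint of $\gamma$ is the vertex of the sector, where the density $f = r^p$ may vanish and the formula degenerates; the proposition is understood to apply at boundary points other than the origin. Beyond that, the main obstacle is purely notational: tracking the signs in the first variation formulas and confirming that the constant generalized curvature $\lambda$ coincides with the Lagrange multiplier. Both are standard results in the manifolds-with-density literature, so I would cite them rather than re-derive them.
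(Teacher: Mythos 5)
Your argument is correct in substance, but it follows a genuinely different route from the paper. The paper does not write any variation formula: it doubles the sector to the $2\theta_{0}$-cone (Prop.~\ref{pro:cone-sector equivalence for iso. regions}), observes that the doubled curve is isoperimetric in the cone and hence smooth (Prop.~\ref{prop:In the circular cone, isoperimetric exist for (inf,-2)U(o,inf)}), and concludes that the reflected curve can only be smooth across the sector edge if the original curve meets that edge perpendicularly. This reflection trick buys two things at once: regularity of the curve up to the boundary \emph{and} orthogonality, with no Lagrange multiplier bookkeeping. Your first-variation argument with the boundary term $\bigl[f\,\langle X,\tau\rangle\bigr]_{\partial\gamma}$ is the standard free-boundary criticality computation and is more general (it works whenever one has the weighted first variation formula), but note that it silently assumes the isoperimetric curve is $C^{1}$ up to its endpoint on the sector edge, so that $\tau(p)$ exists and the boundary term makes sense; that free-boundary regularity is exactly the delicate point, and in this paper it is obtained precisely by the doubling argument you are replacing (interior regularity in the cone gives regularity of the half-curve up to the edge). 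So to make your proof self-contained you would need to cite a free-boundary regularity result, or else invoke the doubling first -- at which point the paper's shorter argument already finishes the job. Your handling of the vertex (excluding the origin, where $f$ may vanish) matches the paper's implicit convention, since the cone is taken with the vertex deleted.
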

\begin{proof}
By Proposition \ref{pro:cone-sector equivalence for iso. regions}
there is a one-to-one correspondence between isoperimetric curves
in the sector and the cone, meaning the double of this curve in the
cone of angle $2\theta_{0}$ is isoperimetric and hence smooth (Prop.
\ref{prop:In the circular cone, isoperimetric exist for (inf,-2)U(o,inf)}).
But the double can only be smooth at the points corresponding to the
boundary points of the original curve if the intersection of the original
curve with the boundary is perpendicular. \end{proof}
\begin{prop}
\label{pro:Isoperimetric curves have strictly positive/negative derivative}In
the $\theta_{0}$-sector with smooth density $f(r)$, if isoperimetric
curves are nonconstant polar graphs (in the sense that any radial
ray intersects the curve exactly once), they do not contain a critical
point of radius on the interior.\end{prop}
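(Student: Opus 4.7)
The plan is to argue by contradiction. Suppose the polar graph $r:[0,\theta_{0}]\to(0,\infty)$ of the isoperimetric curve $\gamma$ satisfies $r'(\theta^{*})=0$ for some $\theta^{*}\in(0,\theta_{0})$. First I would apply Proposition~\ref{pro:cone-sector equivalence for iso. regions} to double $\gamma$ across the ray $\theta=0$, producing a smooth closed CGCC isoperimetric curve $\tilde\gamma$ in the cone of angle $2\theta_{0}$. By the doubling, $\tilde\gamma$ has a critical point of radius at $\theta=0$; by Proposition~\ref{des:Proposition - isoperimetric curves meet boundary perpendicular-1}, it has critical points at $\theta=\pm\theta_{0}$ (identified as a single point in the cone); and by hypothesis, additional critical points at $\theta=\pm\theta^{*}$.

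Next I would use Proposition~\ref{pro:CGCC symmetric about crit pts}. Because $\tilde\gamma$ is a closed curve, the reflection symmetry across each line through the origin and a critical radius is \emph{global}. Composing the reflections across the lines at $\theta=0$ and $\theta=\theta^{*}$ gives an isometry of $\tilde\gamma$ which is a rotation of the cone by $2\theta^{*}$. I then split on whether $\theta^{*}/\theta_{0}$ is rational. If irrational, the iterates of this rotation are dense in the rotation group of the cone, so $\tilde\gamma$ is fully rotationally invariant and hence a circle about the origin—contradicting the nonconstancy of $r$. If $\theta^{*}/\theta_{0}=p/q$ in lowest terms, the rotations form a cyclic subgroup of order $q$, so $\tilde\gamma$ has $q$-fold rotational symmetry and $r$ has period $2\theta_{0}/q$; the alternating max-min pattern at consecutive critical points of $r$, together with the fact that $r(0)=r(\theta_{0})$ is forced by this rotational symmetry and the evenness of the doubled curve, then requires $q$ to be even, so $q\geq 2$.

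The final step, which I expect to be the main obstacle, is to rule out each such $q$-fold rotationally symmetric (``multi-bump'') configuration. The strategy is to pass to the quotient cone of angle $2\theta_{0}/q$: the descent of $\tilde\gamma$ there is a CGCC curve enclosing area $A/q$ with perimeter $P/q$, and the isoperimetric character of $\tilde\gamma$ in the big cone forces this descent to be isoperimetric in the smaller cone. One then constructs in the big cone a competitor of area $A$ with perimeter strictly less than $P$—for example by following the CGCC flow in the small cone to produce a ``single-bump'' candidate in the original cone (obtained by unrolling rather than tiling the $q$ petals). The strict perimeter decrease amounts to the claim that, for the fixed enclosed area, multi-bump CGCC curves are never as perimeter-efficient as single-bump ones; this comparison does not seem to follow purely from the earlier propositions and requires either an explicit analysis of the CGCC flow in the cone with smooth radial density or a direct rearrangement argument, which is where the bulk of the work lies.
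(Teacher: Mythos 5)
Your setup matches the paper's first half: doubling into the $2\theta_{0}$-cone, using perpendicularity (Prop.~\ref{des:Proposition - isoperimetric curves meet boundary perpendicular-1}) and the reflection symmetry of constant generalized curvature curves (Prop.~\ref{pro:CGCC symmetric about crit pts}) to force at least four critical points of $r$ on the doubled curve. But the proof as proposed has a genuine gap, and you identify it yourself: the elimination of the resulting ``multi-bump'' configurations is exactly the content of the proposition, and it is left unproved. Worse, the route you sketch for closing it is unlikely to work at this level of generality: the statement is for an arbitrary smooth radial density $f(r)$, so there is no scaling symmetry, no isoperimetric profile of the form $cA^{(p+1)/(p+2)}$, and no concavity argument available; ``unrolling'' a petal from the quotient cone of angle $2\theta_{0}/q$ into the full cone does not produce a curve of the correct area with controlled perimeter, and comparing single-bump versus multi-bump CGCC curves for a general $f(r)$ would require the explicit CGCC analysis you defer. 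The rational/irrational dichotomy and the $q$-fold symmetry discussion are also heavier machinery than is needed (and the claim that $q$ must be even is not justified as stated).

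The paper closes the argument with a short competitor construction that sidesteps all of this: since the four critical points are strict extrema (by the reflection symmetry), one can choose a circle $C$ about the cone's axis meeting the curve in at least four points. The curve is then cut by $C$ into at least two arcs lying outside $C$ and at least two lying inside, and interchanging one outer arc with one inner arc (reflecting/swapping the corresponding pieces of the region) yields a region with exactly the same weighted area and perimeter but whose boundary has a corner where it crosses $C$. This swapped region would itself be isoperimetric, contradicting the regularity of isoperimetric curves (Prop.~\ref{prop:In the circular cone, isoperimetric exist for (inf,-2)U(o,inf)}). That one rearrangement step is the missing idea in your proposal; with it, the case analysis on $\theta^{*}/\theta_{0}$ and the quotient-cone comparison become unnecessary.
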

\begin{proof}
Assume there is an isoperimetric curve with a critical point of radius
on the interior. Then by doubling as in Proposition \ref{pro:cone-sector equivalence for iso. regions}
we get an isoperimetric curve on the $2\theta_{0}$ cone, given in
polar coordinates by a function $r$. Since by \ref{des:Proposition - isoperimetric curves meet boundary perpendicular-1}
the initial curve met the boundary perpendicularly, $r$ has critical
points at $0$ and $\theta_{0}$, as well as the given critical point
between $0$ and $\theta_{0}$. Since constant generalized curvature
curves are symmetric under reflection across a line through the origin
and a critical point of $r$ (Prop. \ref{pro:CGCC symmetric about crit pts}),
we see that this curve has at least four critical points. By symmetry,
critical points must be strict extrema. Let $C$ be a circle about
the axis intersecting the curve in at least four points. $C$ divides
the curve into at least two regions above $C$ and two regions below
$C$. Interchanging one region above $C$ with a region below $C$
results in a region with the same perimeter and area whose boundary
is not smooth. Since isoperimetric curves must be smooth, $r$ cannot
be isoperimetric.\end{proof}
\begin{cor}
\label{cor:Isoperimetric curves are monotonic}If an isoperimetric
curve $r(\theta)$ in the $\theta_{0}$-sector with smooth density
is a nonconstant polar graph, $r$ must be strictly monotonic.\end{cor}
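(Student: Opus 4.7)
The corollary is essentially the contrapositive of the preceding Proposition \ref{pro:Isoperimetric curves have strictly positive/negative derivative}. The plan is to show that failure of strict monotonicity of $r(\theta)$ forces an interior critical point of the radius function, which that proposition forbids.

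First I would record that $r$ is smooth on the open interval $(0,\theta_0)$: this follows from Proposition \ref{prop:In the circular cone, isoperimetric exist for (inf,-2)U(o,inf)} combined with the sector-cone correspondence of Proposition \ref{pro:cone-sector equivalence for iso. regions}, which gives smoothness of isoperimetric curves away from the boundary rays. In particular $r$ is continuously differentiable there, so the standard calculus notion of a critical point (a zero of $r'$) coincides with the geometric notion used in Proposition \ref{pro:Isoperimetric curves have strictly positive/negative derivative}.

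Next I would argue that a smooth, nonconstant function $r$ on $(0,\theta_0)$ that is not strictly monotonic must have $r'(\theta^*)=0$ for some $\theta^*\in(0,\theta_0)$. Indeed, if $r$ is constant on some open subinterval then $r'$ vanishes identically there. Otherwise, failure of strict monotonicity produces points $\theta_1<\theta_2<\theta_3$ in $(0,\theta_0)$ with either $r(\theta_1)\le r(\theta_2)$ and $r(\theta_2)\ge r(\theta_3)$ (with at least one inequality strict) or the reverse pattern; then the continuous function $r$ attains an extremum on $[\theta_1,\theta_3]$ at an interior point $\theta^*\in(\theta_1,\theta_3)$, and Fermat's theorem yields $r'(\theta^*)=0$.

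In either case, $r$ has a critical point of radius in the interior of the sector, contradicting Proposition \ref{pro:Isoperimetric curves have strictly positive/negative derivative}. There is no real obstacle in this corollary: it is purely an elementary unpacking of what non-monotonicity means for a smooth polar graph, packaged as a convenient restatement of the previous proposition.
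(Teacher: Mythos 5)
Your argument is correct and is exactly the (implicit) argument of the paper, which states the corollary without proof as an immediate consequence of Proposition \ref{pro:Isoperimetric curves have strictly positive/negative derivative}: a smooth nonconstant polar graph that is not strictly monotonic would have an interior critical point of radius, which that proposition forbids.
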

\begin{rem*}
Proposition \ref{pro:Isoperimetric curves have strictly positive/negative derivative}
and Corollary \ref{cor:Isoperimetric curves are monotonic} strengthen
some arguments of Adams \emph{et al.} \cite[Lem. 3.6]{Adams et al - Isoperimetric Regions in Gauss Sectors}.
\end{rem*}

\section{\label{sec:sector w/density r^p}The Isoperimetric Problem in Sectors
with Density $r^{p}$}

Our main result, Theorem \ref{thm:Major theorem on MWD}, characterizes
isoperimetric regions in planar sectors with density $r^{p}$, $p>0$.
The subsequent results consider reformulations of the problem in terms
of differing perimeter and area densities in the plane and in terms
of analytic inequalities. 
\begin{prop}
\label{pro:semi-circles minimize in half plane}In the half plane
with density $r^{p}$, $p>0$, semicircles through the origin are
the unique isoperimetric curves.\end{prop}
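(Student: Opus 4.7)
The plan is to reduce the half-plane problem to the full-plane problem via the sector-cone equivalence, then invoke the known theorem of Dahlberg \emph{et al.}

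First I would observe that the half-plane is precisely the $\pi$-sector, and that the $2\theta_0$-cone with $\theta_0 = \pi$ is the Euclidean plane itself (viewed as a $2\pi$-cone over $\mathbb{S}^1$). Thus Proposition \ref{pro:cone-sector equivalence for iso. regions} supplies a one-to-one correspondence (modulo rotations) between isoperimetric regions in the half-plane with density $r^p$ and isoperimetric regions in the plane $\mathbb{R}^2$ with the same density. Under this correspondence, areas and perimeters double.

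Next I would invoke Dahlberg \emph{et al.} \cite[Thm. 3.16]{Dahlberg et al - Isoperimetric regions in planes with density r^p}, which asserts that for $p>0$ the unique isoperimetric curves in the plane with density $r^p$ are circles through the origin. Combined with the previous step, every isoperimetric curve in the half-plane must be one of the two pieces obtained from such a circle by cutting along the pair of rays provided by Lemma \ref{lem:there exist rays dividing iso. regions. in cone} (a line through the origin, since the two rays are separated by angle $\pi$). Because the origin lies on the circle, the only line through the origin that bisects both area and perimeter is the diameter line through the center of the circle, and the two halves it produces are precisely semicircles through the origin.

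Finally I would record uniqueness: if some other curve in the half-plane were isoperimetric for the same weighted area, its double in the plane would have the same weighted area and perimeter as a circle through the origin, contradicting the uniqueness clause in Dahlberg \emph{et al.} The main obstacle here is essentially just bookkeeping — checking that ``modulo rotations'' in Proposition \ref{pro:cone-sector equivalence for iso. regions} is harmless because a rotation of a circle through the origin is again a circle through the origin, and that the bisecting line guaranteed by Lemma \ref{lem:there exist rays dividing iso. regions. in cone} must coincide with the diameter. The substantive work — proving that circles through the origin are uniquely isoperimetric in the plane — is entirely imported from Dahlberg \emph{et al.} and is not redone.
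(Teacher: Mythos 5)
Your proposal is correct and follows essentially the same route as the paper: identify the half-plane with the $\pi$-sector, use Proposition \ref{pro:cone-sector equivalence for iso. regions} to pass to the $2\pi$-cone (the plane), and import the uniqueness of circles through the origin from Dahlberg \emph{et al.} The extra bookkeeping about the bisecting line and the diameter is harmless but not needed beyond what the correspondence already provides.
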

\begin{proof}
By Proposition \ref{pro:cone-sector equivalence for iso. regions},
there is a one-to-one correspondence between isoperimetric curves
in the $\theta_{0}$-sector and the $2\theta_{0}$-cone; in particular
there is a correspondence between isoperimetric curves in the half
plane and isoperimetric curves in the $2\pi$-cone, \emph{i.e.}, the
plane. Since circles through the origin are uniquely isoperimetric
in the plane (see Sect. \ref{sub:The-Sector w/density r^p}), semicircles
through the origin are uniquely isoperimetric in the half plane.\end{proof}
\begin{prop}
\label{pro:if circle minimizes, it minimizes for all theta less}For
density $r^{p}$, if the circular arc about the origin is not uniquely
isoperimetric in the $\theta_{0}$-sector, for all $\theta>\theta_{0}$
it is not isoperimetric.\end{prop}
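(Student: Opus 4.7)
The plan is to exploit the angular dilation
\[
\phi_{t}\colon(r,\theta)\longmapsto(r,t\theta),\qquad t:=\theta/\theta_{0}>1,
\]
which is a bijection from the $\theta_{0}$-sector onto the $\theta$-sector sending each boundary ray to a boundary ray. I would first record how $\phi_{t}$ affects weighted area and weighted arclength. The area element $r^{p+1}\,dr\,d\theta$ picks up a factor of exactly $t$, so $\mathrm{Area}(\phi_{t}R)=t\,\mathrm{Area}(R)$. For a smooth arc parameterized as $(r(\tau),\theta(\tau))$, the weighted length
\[
\int r^{p}\sqrt{(r')^{2}+r^{2}(\theta')^{2}}\,d\tau
\]
is transformed into $\int r^{p}\sqrt{(r')^{2}+t^{2}r^{2}(\theta')^{2}}\,d\tau$, and since $t>1$ the new integrand is pointwise bounded by $t$ times the old one, with equality if and only if $r'=0$, i.e., only along circular arcs about the origin.

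By hypothesis there is a region $R$ in the $\theta_{0}$-sector, of some area $A$, with $P(R)\le P_{\circ}(\theta_{0},A)$, and such that either the inequality is strict or $R$ is not the circular arc of area $A$, where $P_{\circ}(\theta_{0},A)=(\theta_{0}(p+2)^{p+1}A^{p+1})^{1/(p+2)}$ is the perimeter of that arc. I next observe that the free boundary of $R$ cannot consist entirely of circular arcs about the origin: such a region would be a disjoint union of concentric disks and annuli, and the subadditivity
\[
(x+y)^{(p+1)/(p+2)}\le x^{(p+1)/(p+2)}+y^{(p+1)/(p+2)}\qquad(x,y\ge 0)
\]
(valid since $0<(p+1)/(p+2)<1$) shows that any such region strictly exceeds $P_{\circ}(\theta_{0},A)$ unless it is the single disk of area $A$. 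Hence $r'\not\equiv0$ on the free boundary of $R$.

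Applying $\phi_{t}$ to $R$ then yields a region $\phi_{t}R$ in the $\theta$-sector with area $tA$ and, by the strict case of the perimeter inequality, $P(\phi_{t}R)<t\,P(R)$. The same transformation applied to the circular arc (where it is an equality) gives the scaling identity $P_{\circ}(\theta,tA)=t\,P_{\circ}(\theta_{0},A)$, so
\[
P(\phi_{t}R)<t\,P(R)\le t\,P_{\circ}(\theta_{0},A)=P_{\circ}(\theta,tA),
\]
and the circular arc of area $tA$ is beaten by $\phi_{t}R$ in the $\theta$-sector. Since the $r^{p}$-isoperimetric problem is scale invariant (the ratio $P^{p+2}/A^{p+1}$ is preserved by Euclidean dilations), the circular arc fails to be isoperimetric at every area in the $\theta$-sector.

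The main obstacle is securing the strict inequality $P(\phi_{t}R)<tP(R)$, which amounts to excluding the pathological case that $R$ is a union of concentric circular-arc regions; the subadditivity inequality above disposes of exactly that possibility, and the rest of the argument is a direct computation with the two transformation rules.
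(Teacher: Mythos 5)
Your proof is correct and takes essentially the same route as the paper: apply the angular dilation $\alpha\mapsto\alpha\theta/\theta_{0}$, which multiplies weighted area by exactly $t=\theta/\theta_{0}$ and weighted perimeter by at most $t$, with equality only along arcs about the origin, so the image of a non-circular competitor beats the circular arc in the $\theta$-sector (and scale invariance extends this to all areas). Your additional step excluding competitors bounded entirely by concentric circular arcs, via concavity of $A^{(p+1)/(p+2)}$, merely makes explicit a case the paper's proof leaves implicit.
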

\begin{proof}
Let $\gamma$ be a non-circular isoperimetric curve in the $\theta_{0}$-sector,
and let $C$ be a circular arc bounding the same area as $\gamma$.
For any angle $\theta>\theta_{0}$, transition to the $\theta$-sector
via the map $\alpha\rightarrow\alpha\theta/\theta_{0}$. This map
multiplies area by $\theta/\theta_{0}$, and scales tangential perimeter.
Therefore, if $\gamma$ had the same or less perimeter than $C$ in
the $\theta_{0}$-sector, its image under this map has less perimeter
than a circular arc about the origin in the $\theta$-sector.\end{proof}
\begin{rem*}
An equivalent statement of Proposition \ref{pro:if circle minimizes, it minimizes for all theta less}
is that if the circular arc about the origin is isoperimetric for
some sector angle $\theta_{0},$ then it is uniquely isoperimetric
for all $\theta<\theta_{0}.$\end{rem*}
\begin{prop}
\textbf{\label{pro:.isoperimetric curves are polar graphs containing origin}}
In the $\theta_{0}$-sector with density $r^{p}$, $p>0$, an isoperimetric
region contains the origin, and its boundary is a polar graph.\end{prop}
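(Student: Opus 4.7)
The approach is to show that an isoperimetric region must equal its \emph{radial decreasing rearrangement}. For a measurable region $R$ in the $\theta_{0}$-sector, define $R^{\ast}$ to be the region whose slice at each angle $\theta$ is the interval $[0,\rho(\theta)]$, where $\rho(\theta)\ge 0$ is chosen to preserve the weighted area integrated along that ray:
\[
\frac{\rho(\theta)^{p+2}}{p+2}=\int_{R\cap\{\mathrm{angle}=\theta\}} r^{p+1}\,dr.
\]
By Fubini, $R^{\ast}$ has the same total weighted area as $R$, contains the origin (wherever $\rho(\theta)>0$), and has polar graph boundary $r=\rho(\theta)$. The proposition therefore reduces to the rearrangement inequality $P(R^{\ast})\le P(R)$ with strict inequality unless $R=R^{\ast}$ up to a null set; the isoperimetric hypothesis then forces $R=R^{\ast}$, yielding both conclusions at once.

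\textbf{Key step.} To prove the perimeter inequality I would use the coarea formula in the angular variable. For almost every $\theta$, the ray at angle $\theta$ meets $\partial R$ transversally in finitely many points $r_{1}(\theta)<\cdots<r_{2k}(\theta)$, and the weighted perimeter of $R$ decomposes as an integral in $\theta$ of a local contribution from these crossings. A pointwise comparison, using that $r\mapsto r^{p+1}$ is strictly increasing and convex for $p>0$, together with a Cauchy--Schwarz-type step that separates the radial and tangential components of $ds$, shows that this local contribution is at least the contribution of the single arc $r=\rho(\theta)$ coming from $\partial R^{\ast}$; equality holds only when the slice $R_{\theta}$ is already $[0,\rho(\theta)]$. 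Integrating in $\theta$ gives the global inequality and identifies the equality case. Perpendicular intersection of $\partial R$ with the sector boundary (Proposition \ref{des:Proposition - isoperimetric curves meet boundary perpendicular-1}) ensures compatibility of the rearrangement with the sector rays.

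\textbf{Main obstacle.} The principal difficulty is making the perimeter inequality rigorous, especially characterizing equality: one must control portions of $\partial R$ whose tangents are radial, which behave degenerately in the angular coarea formula. One route is to invoke standard Pólya--Szegő-type results for radial symmetrization of sets of finite perimeter in planes with radial density. A cleaner alternative is to pass to the $2\theta_{0}$-cone via Proposition \ref{pro:cone-sector equivalence for iso. regions}, where full rotational symmetry together with the smoothness of $\partial R$ supplied by Proposition \ref{prop:In the circular cone, isoperimetric exist for (inf,-2)U(o,inf)} streamlines the pointwise comparison and the identification of the equality case.
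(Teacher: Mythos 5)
There is a genuine gap: the rearrangement inequality $P(R^{\ast})\le P(R)$ on which your whole plan rests is false for this ray-wise (star-shaped) rearrangement, so it cannot be established by any pointwise comparison in the angular coarea formula. Concretely, take $R$ to be a unit disk centered at distance $D$ from the origin, well inside the sector. Its weighted perimeter is $\approx 2\pi D^{p}$, while its slices satisfy $\rho(\theta)^{p+2}=(p+2)\int_{R_{\theta}}r^{p+1}dr\approx (p+2)D^{p+1}\ell(\theta)$, so $\rho_{\max}\sim c\,D^{(p+1)/(p+2)}$ and the boundary of $R^{\ast}$, which must run from $r=0$ out to $r=\rho_{\max}$ and back, has weighted length at least $\tfrac{2}{p+1}\rho_{\max}^{p+1}\sim c\,D^{(p+1)^{2}/(p+2)}$. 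Since $(p+1)^{2}/(p+2)=p+1/(p+2)>p$, the rearranged set has far more weighted perimeter once $D$ is large. The failure is exactly in the radial term you propose to handle by a Cauchy--Schwarz step: differentiating $\rho^{p+2}=\sum\pm r_{j}^{p+2}$ only gives $\rho^{p+1}|\rho'|\le\sum r_{j}^{p+1}|r_{j}'|$, which does not imply $\rho^{p}|\rho'|\le\sum r_{j}^{p}|r_{j}'|$ when the mass of $R$ sits at radii much larger than $\rho$; the tangential comparison does go through, but the radial one does not. For the same reason the appeal to ``standard P\'olya--Szeg\H{o}-type results for radial symmetrization'' is misplaced: the symmetrizations that are perimeter-nonincreasing in this setting are Schwarz, Steiner, and spherical (cap) symmetrization across spheres about the origin, not rearrangement along rays. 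Restricting attention to isoperimetric $R$ does not rescue the argument as written, because your conclusion is drawn from a general inequality that simply does not hold, and you have not supplied the extra structural information about isoperimetric regions that would replace it.

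The paper avoids rearrangement entirely: by Proposition \ref{pro:Power change of coordinates} one passes to a $(p+1)\theta_{0}$-sector with Euclidean perimeter and strictly decreasing area density $r^{-p/(p+1)}$, where containing the origin is immediate from the monotone area density; then constant generalized curvature, being classical curvature divided by the (positive) area density, forces the classical curvature to have one sign, so the boundary is convex, and a convex curve bounding a region containing the origin is a polar graph. If you want to keep a symmetrization flavor, spherical symmetrization about the origin (as in Lemma \ref{lem:Isoperimetric regions bounded by surface of revolution}) is the legitimate tool, but it yields angular symmetry rather than the starlike/polar-graph structure you need, so some version of the convexity argument seems unavoidable.
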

\begin{proof}
By Proposition \ref{pro:Power change of coordinates}, the $\theta_{0}$-sector
with density $r^{p}$ is equivalent to the $(p+1)\theta_{0}$ sector
with Euclidean perimeter and area density $r^{-q}$, $q=p/(p+1)$.
Since the properties in question are preserved by this change of coordinates,
we can work in this space. Since the area density is strictly decreasing
away from the origin, an isoperimetric region must contain the origin.
Any isoperimetric region is bounded by a smooth curve of constant
generalized curvature (Thm. \ref{prop:In the circular cone, isoperimetric exist for (inf,-2)U(o,inf)}),
and since here generalized curvature is just classical curvature divided
by the area density \cite[Def. 3.1, Prop. 3.2]{Carroll et al - Isoperimetric problem on planes with density},
the classical curvature does not change sign, and thus the curve is
convex. As the isoperimetric region contains the origin and its boundary
is convex, its boundary must be a polar graph. \end{proof}
\begin{prop}
\label{pro:more area/less perimeter argument works} In the $\theta_{0}$-sector
with density $r^{p}$, $p>0$, the least-perimeter ‘isoperimetric’
function $I(A)$ satisfies\[
I(A)=cA^{\frac{p+1}{p+2}}.\]
\end{prop}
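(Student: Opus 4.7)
The plan is to use the standard scaling argument for radial power densities, which is the natural approach since both area and perimeter transform in a clean way under dilations centered at the origin.

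First, I would fix a dilation $T_\lambda\colon (r,\theta)\mapsto(\lambda r,\theta)$ of the sector for $\lambda>0$; this is a bijection of the $\theta_0$-sector to itself. Under $T_\lambda$, the Euclidean line element scales by $\lambda$ and the Euclidean area element by $\lambda^2$, so the weighted perimeter $\int r^p\,ds$ scales by $\lambda^{p+1}$ while the weighted area $\int r^p\,dA$ scales by $\lambda^{p+2}$. Thus if $R$ is a region of weighted area $A$ and weighted perimeter $P$, then $T_\lambda(R)$ has weighted area $\lambda^{p+2}A$ and weighted perimeter $\lambda^{p+1}P$.

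Next I would invoke existence (Proposition \ref{prop:In the circular cone, isoperimetric exist for (inf,-2)U(o,inf)}), which guarantees that for every $A>0$ there is a region achieving $I(A)$. Given such a minimizer $R_A$ of area $A$ and any target area $A'>0$, choose $\lambda=(A'/A)^{1/(p+2)}$, so $T_\lambda(R_A)$ has weighted area $A'$ and weighted perimeter $\lambda^{p+1}I(A) = (A'/A)^{(p+1)/(p+2)}I(A)$. This shows
\[
I(A')\le\left(\frac{A'}{A}\right)^{\tfrac{p+1}{p+2}}I(A).
\]
Applying the same inequality with the roles of $A$ and $A'$ swapped gives the reverse bound, hence equality. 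Fixing $A_0=1$ and setting $c=I(1)$ yields $I(A)=cA^{(p+1)/(p+2)}$ for all $A>0$.

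The argument is essentially routine once existence is in hand; the only thing to be slightly careful about is that $T_\lambda$ really does preserve the sector (which it does, since $\theta$ is unchanged) and that the scaling exponents are correctly read off from the density $r^p$ acting on $ds$ versus on $dA$. No serious obstacle is expected.
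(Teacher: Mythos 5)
Your proposal is correct and is essentially the paper's own argument: the paper simply cites the radial scaling lemma of Dahlberg \emph{et al.} and notes that the same scaling works in the sector, which is exactly the dilation computation you carry out explicitly (with the minor remark that existence is not even needed, since scaling arbitrary competitors already shows the infimum is homogeneous of degree $(p+1)/(p+2)$).
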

\begin{proof}
Dahlberg \emph{et al.} prove this in the plane \emph{\cite[Lem. 3.7]{Dahlberg et al - Isoperimetric regions in planes with density r^p}},
and the same radial scaling argument works in the sector.\end{proof}
\begin{defn}
The \emph{isoperimetric ratio} of a $\theta_{0}$-sector with density
$r^{p}$ is the constant $c$ from Proposition \ref{pro:more area/less perimeter argument works}.
\end{defn}

\begin{defn}
An \emph{undulary }is a nonconstant positive polar graph with constant
generalized curvature.
\end{defn}
We note here a useful result from Dahlberg \emph{et al. }\cite[Prop. 2.11]{Dahlberg et al - Isoperimetric regions in planes with density r^p}:
In a planar domain with density $r^{p}$, $p>0$, if a constant generalized
curvature closed curve passes through the origin, it must be a circle.
\begin{lem}
\label{lem:Minimizers are circles, semicircles, or unduloids}In the
$\theta_{0}$-sector with density $r^{p}$, $p>0$, isoperimetric
curves are either circles about the origin, semicircles through the
origin, or undularies.\end{lem}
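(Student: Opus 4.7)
The plan is to combine the structural results established earlier in this section with the cone-doubling correspondence (Prop.\ \ref{pro:cone-sector equivalence for iso. regions}) and the cited Dahlberg et al.\ fact (Prop.\ 2.11 of \cite{Dahlberg et al - Isoperimetric regions in planes with density r^p}) that a closed constant generalized curvature curve through the origin must be a circle. Existence of an isoperimetric curve and the constant generalized curvature property come from Proposition \ref{prop:In the circular cone, isoperimetric exist for (inf,-2)U(o,inf)}. By Proposition \ref{pro:.isoperimetric curves are polar graphs containing origin}, the isoperimetric region contains the origin and its boundary inside the sector is a polar graph $r=R(\theta)$ over $[0,\theta_0]$.

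Next I would do a case split on the behavior of $R$. Corollary \ref{cor:Isoperimetric curves are monotonic} forces $R$ to be either constant or strictly monotonic. If $R$ is constant, we have a circular arc about the origin. If $R$ is strictly monotonic, then since $R\ge 0$ it can vanish only at an endpoint, so either $R$ is strictly positive throughout $[0,\theta_0]$ — which is precisely the definition of an undulary — or $R$ vanishes at exactly one endpoint, so the curve passes through the origin. In this final sub-case, I would double the region across one of the bounding rays using Proposition \ref{pro:cone-sector equivalence for iso. regions}; perpendicular contact with the boundary (Prop.\ \ref{des:Proposition - isoperimetric curves meet boundary perpendicular-1}) ensures the doubled curve is smooth across the reflection ray, producing a closed smooth constant generalized curvature curve in the $2\theta_0$-cone with density $r^p$ that passes through the origin. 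The Dahlberg et al.\ characterization then forces the doubled curve to be a circle, and so the original curve is a semicircle through the origin.

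The main technical point is the last case: the cited Dahlberg et al.\ statement is phrased for the plane, and to invoke it in the $2\theta_0$-cone one needs to observe that their ODE-uniqueness argument for constant generalized curvature curves at a radial-density singularity is local and depends only on the density $r^p$, so it transfers verbatim to any radial-density cone (for instance by unfolding the cone to a flat sector, or via Proposition \ref{pro:Power change of coordinates}). Given this, the first two cases are essentially bookkeeping from the earlier propositions of the section.
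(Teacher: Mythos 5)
Your reduction to a polar graph bounding a region containing the origin, and the split into the constant case (circular arc) and the positive nonconstant case (undulary), follow the paper. The gap is in the through-the-origin case: you never exclude isoperimetric curves consisting of \emph{more than one} semicircle through the origin. Concretely, in a sector with $\theta_{0}\geq\pi$ a competitor is two semicircular arcs, one with its diameter on each edge, meeting at the origin; this configuration meets both edges perpendicularly, has constant generalized curvature away from the origin, and its boundary is still a polar graph whose radius vanishes at an \emph{interior} angle. You dismiss this by invoking Corollary \ref{cor:Isoperimetric curves are monotonic}, but that corollary (via Proposition \ref{pro:Isoperimetric curves have strictly positive/negative derivative} and the reflection symmetry of Proposition \ref{pro:CGCC symmetric about crit pts}) is established for critical points occurring at positive radius, where the density $e^{\psi(r)}$ is smooth, ODE uniqueness applies, and the cut-and-paste argument contradicts boundary regularity; at the origin the density vanishes ($\psi\to-\infty$), the radius function need not even be differentiable, and no regularity argument can apply there --- indeed the semicircle itself, which \emph{is} isoperimetric for large angles, touches the origin. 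So origin-touching at an interior angle is not excluded by anything you cite. The paper closes exactly this case with a comparison you are missing: perpendicularity at the boundary forces the curve to consist of an integer number $n$ of semicircles, each satisfying $P_{i}=cA_{i}^{(p+1)/(p+2)}$ with the same constant $c$, and strict concavity gives $\sum P_{i}=c\sum A_{i}^{(p+1)/(p+2)}>c\left(\sum A_{i}\right)^{(p+1)/(p+2)}$, so a single semicircle of the total area strictly beats any chain with $n\geq2$.

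For the remaining sub-case (a single monotone arc reaching the origin at an endpoint), your doubling argument is fine and close to what the paper does: the paper simply cites \cite[Prop. 2.11]{Dahlberg et al - Isoperimetric regions in planes with density r^p}, since constant generalized curvature is a local condition, to conclude the curve is part of a circle through the origin. Your concern about transferring the origin analysis to the cone is a legitimate technical remark but is not the missing ingredient; the missing ingredient is the concavity comparison eliminating multiple semicircles (equivalently, an interior zero of the radius).
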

\begin{proof}
By Proposition \ref{prop:In the circular cone, isoperimetric exist for (inf,-2)U(o,inf)},
minimizers exist, and by Proposition \ref{pro:.isoperimetric curves are polar graphs containing origin},
they must be polar graphs with constant generalized curvature bounding
regions that contain the origin. If the minimizer has constant radius,
it is a circular arc. If the minimizer has nonconstant radius, it
either remains positive, in which case it is an undulary, or goes
through the origin. If the minimizer goes through the origin, it must
be part of a circle through the origin (\cite[Prop. 2.11]{Dahlberg et al - Isoperimetric regions in planes with density r^p}).
In order to meet the boundary perpendicularly, the curve must consist
of an integer number of semicircles. Suppose we have $n$ semicircles
each bounding a region of area $A_{i}$ with perimeter $P_{i}$. Since
all semicircles through the origin have the same ratio $P_{i}/A_{i}^{(p+1)/(p+2)}=c$,
we have\[
P=\sum P_{i}=c\sum A_{i}^{(p+1)/(p+2)}>c\left(\sum A_{i}\right)^{(p+1)/(p+2)}=P_{1},\]
where $P_{1}$ is the perimeter of a single semicircle through the
origin bounding area $A=\sum A_{i}$. Therefore, if a minimizer passes
through the origin, it is a single semicircle.\end{proof}
\begin{thm}
\label{thm:circular arcs minimize for pi/(p+1)}In the $\pi/(p+1)$-sector
with density $r^{p}$, $p>0$, isoperimetric curves are circular arcs.\end{thm}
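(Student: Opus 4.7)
The plan is to apply the power change of coordinates (Proposition \ref{pro:Power change of coordinates}) with $n=p+1$, namely the conformal map $w=z^{p+1}/(p+1)$, which identifies the $\pi/(p+1)$-sector with density $r^{p}$ (for both area and perimeter) with the half-plane carrying Euclidean perimeter density and area density $r^{-p/(p+1)}$, sending circular arcs about the origin to half-circles about the origin. It therefore suffices to show that half-circles about the origin are uniquely isoperimetric in this transformed half-plane. Doubling by reflection (Proposition \ref{pro:cone-sector equivalence for iso. regions}, whose argument only uses the rotational symmetry of the density structure and so applies in this mixed-density setting) further reduces the problem to the claim that disks about the origin are uniquely isoperimetric in the full plane endowed with Euclidean perimeter density and area density $r^{s}$, where $s=-p/(p+1)\in(-1,0)$.

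The heart of the argument is a direct comparison. Let $E$ be any region of finite weighted area, and let $B=B(0,R)$ denote the disk about the origin with the same weighted area. Subtracting the shared piece $E\cap B$ gives $|E\setminus B|_{s}=|B\setminus E|_{s}$. On $E\setminus B$ we have $r\ge R$, so $r^{s}\le R^{s}$; on $B\setminus E$ we have $r\le R$, so $r^{s}\ge R^{s}$. These two pointwise inequalities convert the weighted-area equality into the Euclidean-area inequality $|B\setminus E|_{\mathrm{Eucl}}\le|E\setminus B|_{\mathrm{Eucl}}$, and adding the common part $|E\cap B|_{\mathrm{Eucl}}$ yields $|E|_{\mathrm{Eucl}}\ge|B|_{\mathrm{Eucl}}=\pi R^{2}$. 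The classical Euclidean isoperimetric inequality in the plane then gives
\[
P_{\mathrm{Eucl}}(E)\;\ge\;2\sqrt{\pi\,|E|_{\mathrm{Eucl}}}\;\ge\;2\pi R\;=\;P_{\mathrm{Eucl}}(B).
\]
Equality throughout forces $E$ to be a Euclidean disk of area exactly $\pi R^{2}$, and the strict radial monotonicity of $r^{s}$ (for $s<0$) then pins its center to the origin, so $E=B$. Pulling back through the conformal identification and the doubling, circular arcs about the origin are uniquely isoperimetric in the $\pi/(p+1)$-sector.

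The main obstacle is not conceptual but bookkeeping: one must check that both Proposition \ref{pro:Power change of coordinates} and Proposition \ref{pro:cone-sector equivalence for iso. regions} carry over cleanly to the mixed-density regime (differing area and perimeter densities) that arises after the conformal change of coordinates. Both proofs rely only on the conformal nature of $w=z^{p+1}/(p+1)$ and the rotational symmetry of the densities, so these extensions are routine. A secondary care point is to ensure that the comparison region $E$ has finite Euclidean area, but if $|E|_{\mathrm{Eucl}}=\infty$ the classical isoperimetric inequality makes $P_{\mathrm{Eucl}}(E)=\infty$ and the conclusion is trivial.
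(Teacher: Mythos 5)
Your argument is correct, but it is not the route the paper's proof takes. After the same change of coordinates (Prop.~\ref{pro:Power change of coordinates}), the paper argues by contradiction through its regularity results: an isoperimetric curve is a polar graph with critical points of $r$ only at the two boundary rays (Props.~\ref{pro:.isoperimetric curves are polar graphs containing origin}, \ref{pro:Isoperimetric curves have strictly positive/negative derivative}), and reflecting and applying the four-vertex theorem together with the formula $\kappa_{\varphi}=cr^{p/(p+1)}\kappa$ forces an extra critical point unless the curve is a circular arc; this argument leans on existence and regularity of minimizers. Your proof is instead a direct comparison: fixing the weighted area, the pointwise monotonicity of $r^{s}$, $s=-p/(p+1)<0$, on the symmetric difference forces the competitor to have at least the Euclidean area of the centered disk, and the classical isoperimetric inequality finishes, with uniqueness coming from the equality case plus the observation (worth one more line) that an off-center disk of radius $R$ has strictly smaller weighted area by the same symmetric-difference comparison. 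This is essentially a rigorous version of the ``more geometric proof'' the paper only sketches in the remark following the theorem (there phrased by fixing Euclidean area and gaining weighted area); your version buys independence from existence, regularity, and the four-vertex theorem, at the cost of the doubling step, whose extension of Lemma~\ref{lem:there exist rays dividing iso. regions. in cone} and Proposition~\ref{pro:cone-sector equivalence for iso. regions} to the mixed-density setting is, as you say, routine since only rotational and reflection invariance of the radial densities is used (and in fact a one-line reflection comparison suffices there, without the full correspondence).
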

\begin{proof}
We change coordinates as in Proposition \ref{pro:Power change of coordinates}
to the half plane with Euclidean perimeter. By Proposition \ref{pro:.isoperimetric curves are polar graphs containing origin},
an isoperimetric curve is a polar graph $r(\theta)$. Suppose that
it is not a circular arc. By Proposition \ref{pro:Isoperimetric curves have strictly positive/negative derivative},
$r(0)\neq r(\pi)$, and $r'(\theta)=0$ at $0$ and $\pi$, and nowhere
else. Reflect $r$ over the x-axis, obtaining a closed curve. By the
four-vertex theorem \cite{Osserman - the four or more vertex theorem},
this curve has at least four extrema of classical curvature. Generalized
curvature in Euclidean coordinates is classical curvature divided
by the area density \cite[Def. 3.1]{Carroll et al - Isoperimetric problem on planes with density}.
That is:\[
\kappa_{\varphi}=cr^{p/p+1}\kappa\]
for some $c>0$. At an extremum of classical curvature we see\[
0=\frac{d}{d\theta}\kappa_{\varphi}=\kappa'cr^{p/p+1}+c\frac{p}{p+1}r^{-1/p+1}r'\kappa=c\frac{p}{p+1}r^{-1/p+1}r'\kappa,\]
which implies either $r'=0$ or $\kappa=0$. However, if $\kappa=0$,
the curve is the geodesic, which is a straight line in Euclidean coordinates,
which cannot be isoperimetric. Therefore $r'=0$, meaning $r$ must
have a critical point other than $0$ and $\pi$, so it cannot be
isoperimetric. \end{proof}
\begin{rem*}
\label{rem:semi-circles minimize in euclidean half plane}After finding
this proof and examining the isoperimetric inequality in Proposition
\ref{prop:substituting alpha in isoperimetric inequality}, we came
across a more geometric proof. We show that in the Euclidean $\pi$-sector
with area density $cr^{-p/p+1}$, isoperimetric curves are circles
about the origin. When $p=0$, a semicircle about the origin is isoperimetric.
Now, for any $p>0$, suppose some region $R$ is isoperimetric. Take
a semicircle about the origin bounding the same Euclidean area; clearly
it will have less perimeter. However, it also has more weighted area,
because we have moved sections of $R$ that were further away from
the origin towards the origin. Since the area density is strictly
decreasing in $r$, we must have increased area. Therefore isoperimetric
curves are circles about the origin for the Euclidean $\pi$-sector
with area density $cr^{-p/p+1}$, implying that isoperimetric curves
are circular arcs in the $\pi/(p+1)$-sector with density $r^{p}$.\end{rem*}
\begin{prop}
\label{pro:when p=00003D1, circles minimize until theta=00003D2}
In the sector of $2$ radians with density $r$, isoperimetric curves
are circles about the origin.\end{prop}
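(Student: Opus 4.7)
The plan is to apply the classical interpretation advertised in Subsection~\ref{sub:The-Sector w/density r^p}. Via Proposition~\ref{pro:cone-sector equivalence for iso. regions}, the $2$-radian sector with density $r$ is isoperimetrically equivalent to the $4$-radian cone over $\mathbb{S}^{1}$ with density $r$. For $p=1$, this weighted problem identifies (via the lift described in Subsection~\ref{sub:The-Sector w/density r^p}) with the unweighted isoperimetric problem in the $3$-dimensional Riemannian cone $C(T)$ over the rectangular flat $2$-torus $T = \mathbb{S}^{1}_{4} \times \mathbb{S}^{1}_{L}$, with cone metric $dr^{2} + r^{2}(d\theta^{2} + d\phi^{2})$, restricted to regions invariant under the $\mathbb{S}^{1}_{L}$-factor. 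Under this correspondence, circular arcs about the origin in the sector lift to metric balls about the vertex of $C(T)$.

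The critical choice is $L = \pi$, so that $T$ has total area $4L = 4\pi$, matching the area of the unit sphere $\mathbb{S}^{2}$. A metric ball of radius $R$ about the vertex of $C(T)$ then has volume $\tfrac{4}{3}\pi R^{3}$ and boundary area $4\pi R^{2}$, exactly the same as a Euclidean ball of radius $R$ in $\mathbb{R}^{3} = C(\mathbb{S}^{2})$. It therefore suffices to show that balls about the vertex are isoperimetric in $C(T)$ among $\mathbb{S}^{1}_{L}$-invariant regions; translating back through the equivalences completes the proof.

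The main tool is a Cartan--Hadamard-type isoperimetric inequality on $C(T)$. Away from the vertex $C(T)$ is smooth with nonpositive sectional curvature --- the radial-tangential planes are flat, while purely $T$-tangential planes have sectional curvature $-1/r^{2}$ since $T$ is flat. Moreover $C(T)$ itself is simply connected (every cone retracts to its vertex). The isoperimetric inequality for simply connected nonpositively curved $3$-manifolds (Kleiner's theorem) then yields $|\partial \Omega|^{3} \geq 36\pi|\Omega|^{2}$ for all regions $\Omega$, with equality realized precisely by balls about the vertex (matching the Euclidean ball ratio $|\partial B|^{3} = 36\pi|B|^{2}$).

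The main obstacle is the conical singularity at the vertex of $C(T)$, which lies outside the scope of Kleiner's theorem as usually stated. A standard approximation --- either smoothing the cone near the vertex and taking limits, or restricting attention to regions in the smooth open set $\{r > 0\}$ where the curvature hypothesis holds strictly --- should extend the inequality to all of $C(T)$. Alternatively one may bypass Cartan--Hadamard via slicewise symmetrization from $C(T)$ to $\mathbb{R}^{3}$: replace each torus strip $\Omega \cap \{r = R\}$ by a spherical cap of equal area on $R\mathbb{S}^{2}$, using that for $L = \pi$ the strip perimeter $2L = 2\pi$ dominates the cap perimeter $\sqrt{A(4\pi - A)} \leq 2\pi$ at every slice; however the accompanying radial-perimeter comparison is delicate and appears to genuinely require the cone-geometric input.
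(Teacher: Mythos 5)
Your setup is exactly the paper's: double the $2$-radian sector to the $4$-radian cone, lift the density $r$ to the Riemannian cone over the rectangular torus $\mathbb{S}^{1}(4)\times\mathbb{S}^{1}(\pi)$ of area $4\pi$, and reduce to showing that balls about the vertex are isoperimetric there. The gap is in the tool you use for that last step. Kleiner's theorem requires a smooth, complete, simply connected manifold of nonpositive curvature, and the vertex singularity of $C(T)$ is not a removable technicality: $C(T)\setminus\{r=0\}$ has $\pi_{1}=\mathbb{Z}^{2}$ and is incomplete, so the theorem does not apply there, and no smoothing of the vertex can keep nonpositive curvature (in a Cartan--Hadamard $3$-manifold distance spheres are $2$-spheres, whereas large distance spheres in $C(T)$ are tori). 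More decisively, the paper's remark immediately following this proposition exhibits a counterexample that kills the strategy outright: for a rectangular torus of area $4\pi$ with one very short direction, a region through the vertex (a ``circle through the origin across the short direction'') beats the vertex ball of the same volume, so the inequality $|\partial\Omega|^{3}\geq 36\pi|\Omega|^{2}$ is genuinely false in that cone --- even though it satisfies every hypothesis your argument actually uses (flat link of area $4\pi$, nonpositive curvature away from the vertex, contractibility). Since your approach never sees the aspect ratio of the torus, no approximation or limiting argument can rescue it; the $4{:}\pi$ ratio is exactly where the real work lies.

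The paper's proof instead cites Morgan's theorem that balls about the vertex are isoperimetric in cones over tori of area at most $|\mathbb{S}^{2}(1)|$, observing that his argument (via the Ros product theorem with density and the known isoperimetric profile of the flat torus) extends to rectangular tori with side ratio at most $4{:}\pi$, and then passes to the quotient by the short circle. Your closing symmetrization sketch is actually pointed in the right direction --- comparing the torus strip perimeter $2\pi$ with spherical cap perimeters is precisely the profile comparison that makes the ratio $4{:}\pi$ critical --- but the ``delicate'' radial comparison you defer is the substance of Morgan's argument, so as written the proof is incomplete and its advertised main tool cannot be repaired.
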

\begin{proof}
Morgan \cite[Prop. 1]{Morgan-isoperimetric balls in cones over tori.}
proves that in cones over the square torus $\mathrm{\mathbb{T}}^{2}=\mathbb{S}^{1}(a)\times\mathbb{S}^{1}(a)$
isoperimetric regions are balls about the origin as long as $|\mathbb{T}^{2}|\leq|\mathbb{S}^{2}(1)|$.
We note that this proof still holds for rectangular tori $\mathbb{T}^{2}=\mathbb{S}^{1}(a)\times\mathbb{S}^{1}(b),$
$a\geq b$ so long as the ratio $a:b$ is at most $4:\pi$. Taking
the cone over the rectangular torus with side lengths 4 and $\pi$,
and modding out by the shorter copy of $\mathbb{S}^{1}$ we get the
cone over an angle 4 with density $\pi r$. Since isoperimetric regions
are balls about the origin in the original space, their images, circles
about the origin, are isoperimetric in this quotient space.\end{proof}
\begin{rem*}
Morgan and Ritor$\acute{\text{e}}$ \cite[Rmk. 3.10]{Morgan et al. - isoperimetric regions in cones}
ask whether $|M^{n}|\leq|\mathbb{S}^{n}(1)|$ is enough to imply that
balls about the origin are isoperimetric in the cone over $M$. Trying
to take the converse to the above argument we found an easy counterexample
to this question. Namely taking $M$ to be a rectangular torus of
area $4\pi$ with one very long direction and one very short direction,
we see that balls about the origin are not isoperimetric, as you can
do better with a circle through the origin across the short direction.

Along the same lines as Proposition \ref{pro:when p=00003D1, circles minimize until theta=00003D2}
one might hope to obtain bounds for other values of $p$ by examining
when balls about the origin are isoperimetric in cones over $\mathbb{S}^{1}(\theta)\times M^{p}$
for $M$ compact with a transitive isometry group and then modding
out by the symmetry group of $M$ to get the cone over $\mathbb{S}^{1}(\theta)$
with density proportional to $r^{p}$. In order to see when balls
about the origin are isoperimetric in the cone over $\mathbb{T}^{2}$,
Morgan uses the Ros product theorem with density \cite[Thm. 3.2]{Morgan-In Polytopes Small Balls about Some Vertex Minimize Perimeter },
which requires the knowledge of the isoperimetric profile of the link
(in his case $\mathbb{T}^{2}$). One such manifold of the form $\mathbb{S}^{1}\times M$
for which the isoperimetric problem is solved is $\mathbb{S}^{1}\times\mathbb{S}^{2}$
\cite[Thm. 4.3]{Pedrosa Ritore-isoperimetric domains in the Riemannian Product}.
The three types of isoperimetric regions in $\mathbb{S}^{1}\times\mathbb{S}^{2}$
are balls or complements of balls, tubular neighborhoods of $\mathbb{S}^{1}\times\{\mathrm{point}\}$,
or regions bounded by two totally geodesic copies of $\mathbb{S}^{2}$.
By far the most difficult of the three cases to deal with are the
balls, where unlike the other two cases we cannot explicitly compute
the volume and surface area. Fixing the volume of $\mathbb{S}^{1}\times\mathbb{S}^{2}$
as $2\pi^{2}=|\mathbb{S}^{3}|$ we can apply the same argument Morgan
uses in \cite[Lemma 2]{Morgan-isoperimetric balls in cones over tori.}
to show that if the sectional curvature of $\mathbb{S}^{1}\times\mathbb{S}^{2}$
is bounded above by 1 (by taking $\mathbb{S}^{2}$ large and $\mathbb{S}^{1}$
small), then balls in $\mathbb{S}^{1}\times\mathbb{S}^{2}$ do worse
than balls in $\mathbb{S}^{3}$, as desired, but unfortunately tubular
neighborhoods of $\mathbb{S}^{1}$ then sometimes beat balls in $\mathbb{S}^{3}$
for certain volumes, making it so we cannot apply the Ros product
theorem. So without a better way to deal with balls in $\mathbb{S}^{1}\times\mathbb{S}^{2}$
that does not depend on such a strong assumption on the sectional
curvature, this method does not work even for the $p=2$ case. Perhaps
there is another way to prove when balls about the vertex are isoperimetric
in the cone over $\mathbb{S}^{1}\times\mathbb{S}^{2}$ without using
the Ros product theorem.\end{rem*}
\begin{prop}
\label{pro:semi-circles don't minimize before pi/2*(p+2)/(p+1)}In
the $\theta_{0}$-sector with density $r^{p}$, semicircles through
the origin are not isoperimetric for $\theta_{0}<\pi\left(p+2\right)/\left(2p+2\right)$.\end{prop}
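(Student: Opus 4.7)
The plan is to exhibit a curve with strictly smaller isoperimetric ratio than the semicircle for every $\theta_0 < \pi(p+2)/(2p+2)$.

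First, I would compute the semicircle's isoperimetric ratio $c_{\mathrm{semi}}$. Writing the semicircle as $r(\theta)=2R\cos\theta$ on $\theta\in[0,\pi/2]$, note that $\sqrt{r^2+(r')^2}=2R$ along this curve, so a direct integration in polar coordinates yields weighted area proportional to $R^{p+2}$ (with the beta integral $B(\tfrac12,\tfrac{p+3}{2})$) and weighted perimeter proportional to $R^{p+1}$ (with $B(\tfrac12,\tfrac{p+1}{2})$). Since the semicircle lies entirely in the subsector $\theta\in[0,\pi/2]$, it fits in every $\theta_0$-sector with $\theta_0\geq\pi/2$, and $c_{\mathrm{semi}}$ depends only on $p$.

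Second, I would try the circular arc about the origin as competitor. By Proposition~\ref{pro:more area/less perimeter argument works} its ratio is $c_{\mathrm{arc}}(\theta_0)=\theta_0^{1/(p+2)}(p+2)^{(p+1)/(p+2)}$. Setting $c_{\mathrm{arc}}(\theta_0)<c_{\mathrm{semi}}$ gives a critical angle $\theta^{*}(p)=\tfrac12\, B(\tfrac12,\tfrac{p+1}{2})^{p+2}/B(\tfrac12,\tfrac{p+3}{2})^{p+1}$. For $p=0$ this evaluates to $\pi=\pi(p+2)/(2p+2)$, which already proves the proposition in the Euclidean case. For $p>0$, however, $\theta^{*}(p)<\pi(p+2)/(2p+2)$ (for instance $\theta^{*}(1)=9/4<3\pi/4$), so the circular arc alone is not enough.

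Third, to cover the remaining range $\theta_0\in[\theta^{*}(p),\pi(p+2)/(2p+2))$ when $p>0$, I would use a constant-generalized-curvature undulary as competitor. For each such $\theta_0$, I would construct (using the machinery developed in Section~\ref{sec:CGCC-1}) a CGC polar graph of nonconstant positive radius whose half-period equals $\theta_0$, so that it fits in the sector perpendicularly to both boundary rays; as $\theta_0\uparrow\pi(p+2)/(2p+2)$ this undulary should degenerate to the semicircle. Comparing its isoperimetric ratio to $c_{\mathrm{semi}}$ should give strict inequality throughout the range.

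The main obstacle is this final step: establishing existence of such undularies for all $\theta_0$ in the required range and verifying the strict ratio inequality requires the quantitative CGC theory, and the identification of $\pi(p+2)/(2p+2)$ as the limiting half-period of undularies degenerating to the semicircle is the delicate point, matching the conjectural value of $\theta_2$ in Conjecture~\ref{con:Conjecture on major thm for MWD}.
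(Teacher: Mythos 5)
Your first two steps are sound: the semicircle's ratio is as you compute, and the circular arc comparison does fail before $\pi(p+2)/(2p+2)$ when $p>0$ (your value $\theta^{*}(1)=9/4<3\pi/4$ is correct). But the third step, which carries the whole remaining range $[\theta^{*}(p),\pi(p+2)/(2p+2))$, is a genuine gap rather than a deferred technicality. Producing, for every $\theta_{0}$ in that range, an undulary with half-period exactly $\theta_{0}$, and showing that the family of half-periods sweeps out everything up to $\pi(p+2)/(2p+2)$ as the curvature approaches that of the semicircle, is precisely the unproved content of Conjecture \ref{con:Conjecture on major thm for MWD} (see also open question 5 in the introduction, which asks exactly whether the half-periods near the semicircle approach $\pi(p+2)/(2p+2)$). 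Even granting existence, an undulary in equilibrium in the sector is only a critical point of the perimeter functional; you would still need a separate argument that its ratio beats the semicircle's. So your plan proves the proposition by first proving the harder conjecture.

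The paper's proof avoids competitors with better ratio altogether and instead improves the semicircle directly. Apply the change of coordinates $w=z^{p+1}/(p+1)$ of Proposition \ref{pro:Power change of coordinates}, which makes the perimeter Euclidean and multiplies angles by $p+1$. The image of the semicircle reaches the origin tangentially to the ray at angle $(\pi/2)(p+1)$, while the far boundary of the sector is the ray at angle $\theta_{0}(p+1)$. Exactly when $\theta_{0}<\pi(p+2)/(2p+2)$, i.e.\ when $\theta_{0}(p+1)$ exceeds $(\pi/2)(p+1)$ by less than $\pi/2$, one can drop a segment from a point $b$ of the curve perpendicularly onto that boundary ray and replace the arc of the semicircle from $b$ to the origin by this segment. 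Since perimeter is now Euclidean, the straight segment is strictly shorter than the arc it replaces, and the swap strictly increases enclosed area; hence the semicircle is not isoperimetric. This is a two-line local surgery requiring none of the quantitative CGC machinery. If you want to salvage your framework, the missing ingredient is exactly such an explicit competitor for $\theta_{0}$ near $\pi(p+2)/(2p+2)$; the truncated semicircle is that competitor.
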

\begin{proof}
In Euclidean coordinates, semicircles through the origin terminate
at the angle $\theta=\left(\pi/2\right)\left(p+1\right)$. Since the
semicircle approaches this axis tangentially, for any $\theta_{0}<\left(\pi/2\right)(p+2)/(p+1)$,
there is a line normal to the boundary $\theta_{0}(p+1)$ in Euclidean
coordinates which intersects the semicircle at a single point $b$.
Replacing the segment of the semicircle from $b$ to the origin with
this line increases area while decreasing perimeter. Therefore semicircles
are not isoperimetric.\end{proof}
\begin{lem}
\label{lem:Inequality on extending increasing of isoperimetric ratio}If
the $\theta_{0}$-sector with density $r^{p}$ has isoperimetric ratio
$I_{0}$ and \[
I_{0}<\left(\frac{p+2}{p+1}\right)(p+2)^{(p+1)/(p+2)}\theta_{0}^{1/(p+2)},\]
then there exists an $\epsilon>0$ such that the isoperimetric ratio
of any $(\theta_{0}+t)$-sector with $0<t<\epsilon$ is greater than
or equal to the isoperimetric ratio of the $\theta_{0}$-sector, with
equality if and only if the semicircle is isoperimetric in the $\theta_{0}$-sector,
in which case the semicircle is uniquely isoperimetric in the $(\theta_{0}+t)$-sector. \end{lem}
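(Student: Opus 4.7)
The plan is to pick $\gamma_t$ an isoperimetric curve in the $(\theta_0+t)$-sector for small $t>0$, with area $A_t$, perimeter $P_t$, and ratio $I(t)=P_t/A_t^{(p+1)/(p+2)}$, and to show $I(t)\geq I_0$ by analyzing each of the three possibilities for $\gamma_t$ from Lemma \ref{lem:Minimizers are circles, semicircles, or unduloids}. For a circular arc, $I(t)=(p+2)^{(p+1)/(p+2)}(\theta_0+t)^{1/(p+2)}$ is strictly increasing in $t$, and at $t=0$ is at least $I_0$ since the circular arc is always a competitor in the $\theta_0$-sector, so $I(t)>I_0$ strictly. For a semicircle through the origin, $I(t)=I_{sc}$ is independent of sector angle: if $\theta_0\geq \pi/2$, the semicircle is a competitor in the $\theta_0$-sector, giving $I_0\leq I_{sc}=I(t)$, while if $\theta_0<\pi/2$ we simply shrink $\epsilon$ below $\pi/2-\theta_0$ so the semicircle does not fit in the $(\theta_0+t)$-sector and this case does not arise.

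The heart of the argument is the undulary case, which I would handle by a strip-removal comparison. Write $\gamma_t$ as a polar graph $r_t(\alpha)$ on $[0,\theta_0+t]$ with $r_t'(0)=r_t'(\theta_0+t)=0$ and strictly monotonic in between by Corollary \ref{cor:Isoperimetric curves are monotonic}; without loss of generality $r_t$ is increasing. Remove the left strip $[0,t]$ where $r_t$ is smallest and translate the remainder to $[0,\theta_0]$, producing a competitor $\Omega_0'$ in the $\theta_0$-sector with area $A_0'(t)=A_t-V(t)$ and perimeter $P_0'(t)=P_t-S(t)$, where
\[
V(t)=\int_0^t \frac{r_t^{p+2}}{p+2}\,d\alpha,\qquad S(t)=\int_0^t r_t^p\sqrt{r_t^2+(r_t')^2}\,d\alpha.
\]
The isoperimetric inequality in the $\theta_0$-sector gives $P_0'(t)\geq I_0\,A_0'(t)^{(p+1)/(p+2)}$, and expanding $A_t^{(p+1)/(p+2)}-A_0'(t)^{(p+1)/(p+2)}$ via concavity of $x\mapsto x^{(p+1)/(p+2)}$ reduces the goal $I(t)\geq I_0$ to the estimate $S(t)/V(t)\geq I_0\,\frac{p+1}{p+2}\,A_0'(t)^{-1/(p+2)}$.

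Because $\sqrt{r^2+(r')^2}\geq r$ and $r_t$ attains its minimum on $[0,t]$ at $\alpha=0$ with $r_t'(0)=0$, one obtains $S(t)/V(t)\geq (p+2)/r_t(t)$, which as $t\to 0^+$ tends to $(p+2)/r_0(0)$, where $r_0$ is the limit CGCC in the $\theta_0$-sector; likewise $A_0'(t)\to A_0=\int_0^{\theta_0}r_0^{p+2}/(p+2)\,d\alpha$. So the needed inequality becomes $I_0\leq \frac{(p+2)^2}{p+1}\,A_0^{1/(p+2)}/r_0(0)$. Since $r_0(\alpha)\geq r_0(0)$ with strict inequality for $\alpha>0$, one has $(p+2)A_0/r_0(0)^{p+2}=\int_0^{\theta_0}(r_0/r_0(0))^{p+2}\,d\alpha\geq \theta_0$, which rearranges to
\[
\frac{(p+2)^2}{p+1}\cdot\frac{A_0^{1/(p+2)}}{r_0(0)}\ \geq\ \frac{p+2}{p+1}\,(p+2)^{(p+1)/(p+2)}\,\theta_0^{1/(p+2)},
\]
exactly the right-hand side of the hypothesis (and strictly greater when $r_0$ is nonconstant). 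Combined with the strict hypothesis $I_0<\mathrm{RHS}$ and continuity, this yields $I(t)>I_0$ strictly for $t$ small.

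The principal obstacle will be making the limit effective: the CGCC with half-period $\theta_0+t$ must vary continuously at $t=0$, and the strip-removal and concavity estimates must be upgraded from asymptotic to uniform on an explicit $(0,\epsilon)$, using the slack between the hypothesis bound and the sufficient bound just derived. Granted this, the equality clause follows from the case split: the circular-arc and undulary cases give strict inequality, so $I(t)=I_0$ forces $\gamma_t$ to be a semicircle; then $I_{sc}=I(t)=I_0$ means the semicircle is isoperimetric in the $\theta_0$-sector, and conversely if this holds then the semicircle remains a competitor achieving $I_0$ in the $(\theta_0+t)$-sector, while circular arcs give ratio $I_{ca}(\theta_0+t)>I_{ca}(\theta_0)\geq I_{sc}$ and undularies give ratio strictly greater than $I_0$, so the semicircle is uniquely isoperimetric.
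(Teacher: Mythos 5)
Your argument is essentially correct and takes a genuinely different route from the paper's. The paper partitions the $(\theta_{0}+t)$-sector into a $\theta_{0}$-sector and a $t$-sector, bounds $I_{t}\geq I_{0}\alpha_{0}^{(p+1)/(p+2)}+R_{t}\alpha_{t}^{(p+1)/(p+2)}$ where $R_{t}$ is the isoperimetric ratio of the $t$-sector (known to come from circular arcs, Thm. \ref{thm:circular arcs minimize for pi/(p+1)}), uses monotonicity to get $\alpha_{0}\geq\theta_{0}/(\theta_{0}+t)$, and then analyzes the concave function $f_{t}$, showing $\lim_{t\to0}g'(t)>0$ exactly under the stated hypothesis; this treats all three candidate types at once but genuinely needs $t$ small. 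You instead split into cases and, for undularies, do a strip-removal comparison with elementary bounds on the removed strip. That works, but the step you flag as the ``principal obstacle'' (continuity of the minimizers as $t\to0^{+}$, so that $r_{t}(t)\to r_{0}(0)$ and $A_{0}'(t)\to A_{0}$) is both unproven and unnecessary: your two estimates already close the argument at each fixed $t$. Since $r_{t}$ is increasing, $r_{t}(\alpha)\leq r_{t}(t)$ on $[0,t]$ gives $S(t)/V(t)\geq(p+2)/r_{t}(t)$, while $r_{t}(\alpha)\geq r_{t}(t)$ on $[t,\theta_{0}+t]$ gives $A_{t}-V(t)\geq\theta_{0}\,r_{t}(t)^{p+2}/(p+2)$; multiplying,
\[
\frac{S(t)}{V(t)}\,\bigl(A_{t}-V(t)\bigr)^{1/(p+2)}\;\geq\;(p+2)^{(p+1)/(p+2)}\theta_{0}^{1/(p+2)}\;>\;\frac{p+1}{p+2}\,I_{0}
\]
by the hypothesis, which is precisely your reduced inequality $S(t)/V(t)\geq I_{0}\tfrac{p+1}{p+2}(A_{t}-V(t))^{-1/(p+2)}$, pointwise in $t$ and with strictness -- no limit curve, no continuity, and in fact a strict bound for every $t>0$, not just small $t$. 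With that repair, the $\epsilon$ is only needed to exclude the semicircle case when $\theta_{0}<\pi/2$ (your fitting argument, or Prop. \ref{pro:semi-circles don't minimize before pi/2*(p+2)/(p+1)} as the paper does), and your equality/uniqueness discussion goes through. Comparing the two: the paper's method is uniform over the three candidate types and localizes the hypothesis in a single derivative computation, while yours is more elementary (no $R_{t}$, no differentiation of $g$) and, once the superfluous limit is excised, yields a quantitative strict inequality for undularies at all angles.
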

\begin{proof}
Consider an isoperimetric curve $\gamma$ in the $(\theta_{0}+t)$-sector
bounding area 1. By reflection we can assume it is nonincreasing (Cor.
\ref{cor:Isoperimetric curves are monotonic}). We partition the $(\theta_{0}+t)$-sector
into a $\theta_{0}$-sector followed by a $t$-sector and we let $\alpha_{0}$
denote the area bounded by $\gamma$ in the $\theta_{0}$-sector and
$\alpha_{t}$ the area bounded by $\gamma$ in the $t-$sector. Note
$\alpha_{0}+\alpha_{t}=1$, and since $\gamma$ must be either an
undulary, a circular arc, or a semicircle, $\alpha_{t}=0$ if and
only if $\gamma$ is a semicircle. Then we can bound the isoperimetric
ratio $I_{t}$ of the $(\theta_{0}+t)$-sector by using the isoperimetric
ratios $I_{0}$ for the $\theta_{0}$-sector and $R_{t}$ for the
$t$-sector as follows: \[
I_{t}=P(\gamma)\geq I_{0}\alpha_{0}^{(p+1)/(p+2)}+R_{t}\alpha_{t}^{(p+1)/(p+2)}.\]
Since the radius of $\gamma$ is nonincreasing we know that the $\theta_{0}$-sector
contains at least its angular proportion of the area and thus $\alpha_{0}\geq\theta_{0}/(\theta_{0}+t)$.
We now substitute $\alpha_{t}=1-\alpha_{0}$ and look at the right
side of the inequality as a function of $\alpha_{0}$: \[
f_{t}(\alpha_{0})=I_{0}\alpha_{0}^{(p+1)/(p+2)}+R_{t}(1-\alpha_{0})^{(p+1)/(p+2)}.\]
This function is concave, so it attains its minimum at an endpoint.
Since $\alpha_{0}$ is bounded between $\theta_{0}/(\theta_{0}+t)$
and $1$, we see that $I_{t}$ is greater than or equal to the minimum
of $f_{t}(\theta_{0}/(\theta_{0}+t))$ and $f_{t}(1)$. Since $f_{t}(1)=I_{0}$,
we want to show that there is some $\epsilon$ such that $t<\epsilon$
implies $f_{t}(\theta_{0}/(\theta_{0}+t))>f(1)=I_{0}$. So we define
a new function \[
g(t)=f_{t}(\theta_{0}/(\theta_{0}+t))-I_{0}.\]
We want to show that there is some positive neighborhood of $0$ where
$g(t)>0$. For $t<\pi/(p+1)$, isoperimetric regions are circular
arcs, so for $t\in(0,\delta)$, $R_{t}=(p+2)^{(p+1)/(p+2)}t^{1/p+2}$
and $g$ is differentiable. Furthermore, $\lim_{t\rightarrow0}g(t)=0$
and so it suffices to prove that $\lim_{t\rightarrow0}g'(t)>0$. We
calculate \begin{eqnarray*}
\lim_{t\rightarrow0}g'(t) & = & -I_{0}\left(\dfrac{p+1}{p+2}\right)\theta_{0}^{-1}+\\
 &  & (p+2)^{(p+1)/(p+2)}\theta_{0}^{-(p+1)/(p+2)}\end{eqnarray*}
and deduce that this is greater than $0$ if and only if\[
I_{0}<\left(\frac{p+2}{p+1}\right)(p+2)^{(p+1)/(p+2)}\theta_{0}^{1/(p+2)}.\]

Thus, we get a neighborhood where $g(t)$ is positive. Now, recall
$I_{t}\geq f_{t}(\alpha_{0})$ where $\alpha_{0}$ is the area bounded
by the isoperimetric curve $\gamma$ in the $\theta_{0}$-sector,
and $\alpha_{0}\in[\theta_{0}/(\theta_{0}+t),1]$. Since $g(t)$ is
positive, $f_{t}(\alpha_{0})\geq f_{t}(1)=I_{0}$, with equality holding
if and only if $\alpha_{0}=1,$ that is, only if $\gamma$ is a semicircle.
If $\gamma$ is a semicircle and the semicircle is not isoperimetric
in the $\theta_{0}$-sector then we must have $I_{t}>I_{0}$ since
otherwise the semicircle would be isoperimetric in the $\theta_{0}$-sector.
(Note that by Proposition \ref{pro:semi-circles don't minimize before pi/2*(p+2)/(p+1)}
if $\theta_{0}<\pi/2$ then we can decrease the size of our neighborhood
so that the semicircle is not isoperimetric anywhere in it and thus
we do not have to worry about non-existence of the semicircle here).
On the other hand if the semi-circle is isoperimetric in the $\theta_{0}$-sector
then we see that it is uniquely isoperimetric in the $(\theta_{0}+t)$-sector
since any other curve will have $\alpha_{0}<1$. \end{proof}
\begin{prop}
\label{pro:The-isoperimetric-ratio is increasing for theta_0 < theta_2, constant after, semicircle uniquely}For
fixed $p>0$, the isoperimetric ratio of the $\theta_{0}$-sectors
with density $r^{p}$ is an increasing function of $\theta_{0}$ for
$\theta_{0}<\theta_{2}$, where $\theta_{2}$ is the angle at which
the semicircle is first isoperimetric. On $[\theta_{2},\infty)$ the
isoperimetric ratio is constant and the semicircle is uniquely isoperimetric
on $(\theta_{2},\infty)$. \end{prop}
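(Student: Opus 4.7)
The plan is to apply Lemma~\ref{lem:Inequality on extending increasing of isoperimetric ratio} systematically, first observing that its hypothesis is automatic at every angle. Since the circular arc about the origin is always an admissible competitor, a direct computation of its perimeter and area yields
\[
I_0 \leq (p+2)^{(p+1)/(p+2)}\theta_0^{1/(p+2)} < \tfrac{p+2}{p+1}(p+2)^{(p+1)/(p+2)}\theta_0^{1/(p+2)},
\]
the last inequality following from $(p+2)/(p+1) > 1$. Thus for every $\theta_0 > 0$ the lemma supplies some $\epsilon(\theta_0) > 0$ with $I(\theta_0 + t) \geq I(\theta_0)$ on $t \in (0, \epsilon(\theta_0))$, and with strict inequality whenever the semicircle is not isoperimetric in the $\theta_0$-sector.

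I would then promote this local property to global nondecrease on $(0, \infty)$ by a standard open-closed sweep: given $\theta_a < \theta_b$, the set $S = \{c \in [\theta_a, \theta_b] : I(c) \geq I(\theta_a)\}$ is nonempty, relatively closed (using continuity of the isoperimetric profile, which is standard), and closed under small rightward extension by the local property, hence equals $[\theta_a, \theta_b]$. For strict increase on $(0, \theta_2)$, if $I(\theta_a) = I(\theta_b)$ with $\theta_a < \theta_b < \theta_2$, then nondecrease would force $I$ constant on $[\theta_a, \theta_b]$, contradicting the strict local increase supplied by the lemma at $\theta_a$ (since by definition of $\theta_2$ the semicircle is not isoperimetric there).

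On $[\theta_2, \infty)$, the semicircle is isoperimetric at $\theta_2$ by definition, with ratio $I_{\mathrm{semi}}$ depending only on $p$ (the semicircle's shape is independent of the ambient sector). Since a semicircle that fits in the $\theta_2$-sector also fits in every larger sector, $I(\theta_0) \leq I_{\mathrm{semi}}$ for $\theta_0 \geq \theta_2$, while nondecrease gives $I(\theta_0) \geq I(\theta_2) = I_{\mathrm{semi}}$; hence $I \equiv I_{\mathrm{semi}}$ on $[\theta_2, \infty)$.

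The main obstacle will be unique isoperimetry on $(\theta_2, \infty)$. Since $I \equiv I_{\mathrm{semi}}$ there, the semicircle is isoperimetric at every $\theta_0 \geq \theta_2$, so the lemma's uniqueness clause yields a right-neighborhood $(\theta_0, \theta_0 + \epsilon(\theta_0))$ on which the semicircle is uniquely isoperimetric. To extend this to all of $(\theta_2, \infty)$, for any target $\theta' > \theta_2$ one selects $\theta_0 \in [\theta_2, \theta')$ with $\theta' - \theta_0 < \epsilon(\theta_0)$. This requires $\epsilon(\theta_0)$ to stay bounded below as $\theta_0$ ranges over a compact subinterval; inspecting the lemma's proof, $\epsilon(\theta_0)$ is controlled by the positive derivative $g'(0)$, which depends continuously on $\theta_0$ (with $I_0 = I_{\mathrm{semi}}$ constant there) and so is bounded away from zero on compact subsets of $[\theta_2, \infty)$. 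This yields the required $\theta_0$ and completes the argument.
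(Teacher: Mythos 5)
Your proposal is correct and takes essentially the same route as the paper's proof: verify that the circular-arc competitor makes the hypothesis of Lemma~\ref{lem:Inequality on extending increasing of isoperimetric ratio} hold at every angle, propagate the lemma's local monotonicity and uniqueness statements to global statements (your open-closed sweep and compact-subinterval uniformity of $\epsilon$ are just more explicit versions of what the paper does), and conclude constancy and unique isoperimetry of the semicircle beyond $\theta_{2}$. The only ingredient the paper includes that you should add is the one-line justification that $\theta_{2}$ exists at all: the semicircle is isoperimetric in the $\pi$-sector (Prop.~\ref{pro:semi-circles minimize in half plane}) and the isoperimetric ratio is continuous in $\theta_{0}$, so the set of angles where the semicircle is isoperimetric is nonempty and has a minimum.
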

\begin{proof}
For a circle about the origin $P=\theta_{0}^{1/p+2}(p+2)^{(p+1)/(p+2)}A^{(p+1)/(p+2)}$,
so we see that for all $p>0$ the isoperimetric ratio $I_{0}$ satisfies
the conditions of Lemma \ref{lem:Inequality on extending increasing of isoperimetric ratio}
for every $\theta_{0}$. By Proposition \ref{pro:semi-circles minimize in half plane},
the semicircle is isoperimetric in the $\pi$-sector, and since the
isoperimetric ratio is continuous in $\theta_{0}$ there is some minimum
$\theta_{2}$ where the semicircle is isoperimetric. By Lemma \ref{lem:Inequality on extending increasing of isoperimetric ratio},
the semicircle is isoperimetric on $(\theta_{2},\infty)$. Thus, the
isoperimetric ratio is the same for all of these sectors, and so the
functions $f_{t}$ from the proof of the lemma are the same for all
$\theta_{0}\geq\theta_{2}$. Thus we see that we can pick the $\epsilon$
given by the lemma uniformly so that for all $\theta_{0}\geq\theta_{2}$,
the semicircle uniquely minimizes on $(\theta_{0},\theta_{0}+\epsilon)$,
giving us that the semicircle uniquely minimizes on $(\theta_{2},\infty)$.
On the other hand, since the isoperimetric ratio is continuous in
$\theta_{0}$, it is clear from Lemma \ref{lem:Inequality on extending increasing of isoperimetric ratio}
that it is strictly increasing on $\theta_{0}<\theta_{2}$.\end{proof}
\begin{cor}
\label{cor:The-semicircle-minimizes in half parking garage}The semicircle
is isoperimetric in the {}``half-infinite parking garage'' $\{(\theta,\text{ }r)|\theta\geq0,\text{ }r>0\}$
with density $r^{p}$.\end{cor}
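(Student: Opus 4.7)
The plan is to reduce the statement to the sharp inequality of Proposition \ref{pro:The-isoperimetric-ratio is increasing for theta_0 < theta_2, constant after, semicircle uniquely}. That proposition produces a constant $c$ such that, for every $\theta_0 \geq \theta_2$, the $\theta_0$-sector with density $r^p$ has isoperimetric ratio equal to $c$, uniquely realized by the semicircle through the origin. Since $\theta_2 \leq \pi$ (as the semicircle is already isoperimetric in the $\pi$-sector by Proposition \ref{pro:semi-circles minimize in half plane}), any semicircle through the origin fits inside a $\pi$-subsector of the parking garage, so it is a valid competitor bounding area $A$ with perimeter $cA^{(p+1)/(p+2)}$. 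It therefore suffices to show that every competitor $R$ of area $A$ in the parking garage satisfies $\mathrm{Per}(R) \geq cA^{(p+1)/(p+2)}$.

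The key step is to show that such an $R$ is automatically contained in a bounded sub-sector. I would apply the change of coordinates $w = z^{p+1}/(p+1)$ from Proposition \ref{pro:Power change of coordinates}, which sends the parking garage with density $r^p$ to an analogous parking garage with Euclidean perimeter and area density $r^{-q}$, $q = p/(p+1) \in (0,1)$. Under this map $R$ corresponds to a region $\tilde R$ with the same weighted area and perimeter as $R$. Finite weighted perimeter of $R$ becomes finite \emph{Euclidean} perimeter of $\tilde R$, so $\partial \tilde R$ has finite Euclidean length and hence bounded diameter. Because the transformed area density satisfies $\int_{r_0}^{\infty} r^{1-q}\,dr = \infty$, the unbounded component of the complement of any bounded set in the new parking garage carries infinite weighted area. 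Consequently, a finite weighted area set $\tilde R$ must coincide up to a null set with a bounded region, and pulling back, $R$ is contained in some $\theta_0$-sector with $\theta_0$ finite. Enlarging $\theta_0$ if necessary we may assume $\theta_0 \geq \theta_2$.

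Once $R$ lies in a $\theta_0$-sector with $\theta_0 \geq \theta_2$, Proposition \ref{pro:The-isoperimetric-ratio is increasing for theta_0 < theta_2, constant after, semicircle uniquely} immediately yields $\mathrm{Per}(R) \geq cA^{(p+1)/(p+2)}$, matching the perimeter of the semicircle through the origin and completing the proof. The main obstacle is the compact-containment step: for a general BV set one must replace the intuitive picture of $\partial \tilde R$ as a Jordan curve with the geometric measure theory fact that a set of finite perimeter and finite weighted area whose boundary has bounded diameter must itself be essentially bounded. This is routine and is the same topological input underlying the existence arguments of Section \ref{sec:Isoperimetric-problems-on-Manifolds-with-Density}, so once it is in place the corollary is immediate.
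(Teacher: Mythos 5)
The reduction to the finite-sector result is the right instinct, but the compact-containment step is a genuine gap, not a routine fact. Finite Euclidean perimeter of $\tilde R$ gives finite total length of the (reduced) boundary, and finite length implies bounded diameter only when the boundary is connected. A competitor in the parking garage need not be connected: take countably many disjoint disks centered at the points $(\theta_n,1)$ with $\theta_n\to\infty$ and radii $\rho_n$ summable. Such a region has finite weighted area and finite weighted perimeter, yet is contained in no $\theta_0$-sector with $\theta_0$ finite, in either coordinate system. So the assertion that ``a set of finite perimeter and finite weighted area whose boundary has bounded diameter must be essentially bounded'' never gets off the ground, because the boundary does not have bounded diameter; the divergence of $\int r^{1-q}\,dr$ only rules out competitors containing the complement of a bounded set, it does not force boundedness. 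With that, the reduction of an arbitrary competitor to a single finite sector collapses.

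The paper avoids boundedness entirely: given any competitor $\gamma$, it restricts $\gamma$ to the $\theta_0$-sector; since the cut along the ray $\theta=\theta_0$ lies on the sector's free boundary, the restriction is a legitimate competitor there, so by Proposition \ref{pro:The-isoperimetric-ratio is increasing for theta_0 < theta_2, constant after, semicircle uniquely} (with $\theta_2\le\pi$ from Prop. \ref{pro:semi-circles minimize in half plane}) its isoperimetric ratio is at least the semicircle's; letting $\theta_0\to\infty$, the perimeters and areas of the restrictions converge to those of $\gamma$, so $\gamma$'s ratio is also at least the semicircle's, and the semicircle itself exists in the garage. If you want to keep your containment strategy, you must first decompose $R$ into indecomposable components, argue that each component of finite relative perimeter is bounded and hence lies in some finite sector where $P_i\ge cA_i^{(p+1)/(p+2)}$ holds, and then recover the bound for $R$ by the concavity of $A\mapsto A^{(p+1)/(p+2)}$, exactly as in Proposition \ref{pro: minimizers minimize in sector w/multiplicity}; that summation step is essential and is absent from your argument.
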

\begin{proof}
Suppose $\gamma$ is isoperimetric in the half-infinite parking garage.
For any $\theta_{0}\geq\pi$ the restriction of $\gamma$ to the $\theta_{0}$
sector has isoperimetric ratio greater than or equal to that of the
semicircle. Since $\lim_{\theta_{0}\rightarrow\infty}P(\gamma|_{\theta_{0}})=P(\gamma)$
and $\lim_{\theta_{0}\rightarrow\infty}A(\gamma|_{\theta_{0}})=A(\gamma)$
the limit of the isoperimetric ratios of $\gamma|_{\theta_{0}}$ is
the isoperimetric ratio of $\gamma$ and so we see it is also greater
than or equal to that of the semicircle. Since the semicircle exists
in the half-infinite parking garage we are done. \end{proof}
\begin{rem*}
Studying the isoperimetric ratio turns out to be an extremely useful
tool in determining the behavior of semicircles for $\theta_{0}>\pi$.
However, it is not the only such tool. Here we give an entirely different
proof that semicircles are isoperimetric for all $n\pi$-sectors.\end{rem*}
\begin{prop}
\label{pro: minimizers minimize in sector w/multiplicity}In the $\theta_{0}$-sector
with density $r^{p}$, $p>0$, even when allowing multiplicity greater
than one, isoperimetric regions will not have multiplicity greater
than one.\end{prop}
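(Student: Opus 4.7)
The plan is to apply the same strict concavity argument already used in Lemma \ref{lem:Minimizers are circles, semicircles, or unduloids} for the case of multiple semicircles through the origin. First, I would reformulate a candidate isoperimetric region-with-multiplicity as a sum $\sum_{i} [R_i]$ of multiplicity-one pieces (one piece per sheet of the multiplicity function). This gives a decomposition of the total weighted area as $A = \sum_i A(R_i)$ and a bound on the total weighted perimeter of the form $P \geq \sum_i P(R_i)$, since any cancellation of oppositely oriented boundary pieces between sheets only decreases $P$, strengthening our inequality.

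Next, apply the isoperimetric scaling $I(A) = cA^{(p+1)/(p+2)}$ from Proposition \ref{pro:more area/less perimeter argument works} to each piece to get $P(R_i) \geq c\, A(R_i)^{(p+1)/(p+2)}$, so that
\[ P \;\geq\; c \sum_i A(R_i)^{(p+1)/(p+2)}. \]
Since $(p+1)/(p+2) \in (0,1)$, the function $t \mapsto t^{(p+1)/(p+2)}$ is strictly concave on $[0,\infty)$, giving the strict superadditivity
\[ \sum_i A(R_i)^{(p+1)/(p+2)} \;>\; \Bigl(\sum_i A(R_i)\Bigr)^{(p+1)/(p+2)} = A^{(p+1)/(p+2)} \]
whenever at least two of the $A(R_i)$ are positive. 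Chaining these bounds yields $P > cA^{(p+1)/(p+2)} = I(A)$, so an ordinary multiplicity-one region of total area $A$ strictly beats our configuration, contradicting isoperimetricity.

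The main obstacle is formalizing the decomposition of a region-with-multiplicity (viewed as an integral current or an integer-valued $BV$ function) into multiplicity-one sheets so that the area additivity and the perimeter inequality simultaneously hold; this is essentially the coarea formula applied to the multiplicity function and is standard in geometric measure theory. Once that decomposition is in hand, the concavity computation above finishes the argument immediately, exactly paralleling the multi-semicircle bound in Lemma \ref{lem:Minimizers are circles, semicircles, or unduloids}.
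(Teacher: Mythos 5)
Your argument is essentially the paper's own proof: decompose the region with multiplicity into multiplicity-one (nested, superlevel-set) pieces so that weighted area adds and weighted perimeter is at least the sum of the pieces' perimeters, apply $P(R_i)\geq c\,A(R_i)^{(p+1)/(p+2)}$ from Proposition \ref{pro:more area/less perimeter argument works}, and conclude by strict subadditivity of $t\mapsto t^{(p+1)/(p+2)}$. The paper does exactly this, citing the nested decomposition from Morgan's \emph{Geometric Measure Theory} (for which perimeter is in fact additive, so your inequality holds with equality), and your concavity step is the same as its final chain of inequalities.
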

\begin{proof}
A region $R$ with multiplicity may be decomposed as a sum of nested
regions $R_{j}$ with perimeter and area \cite[Fig. 10.1.1]{Morgan - GMT}:\[
P(R)=\sum P(R_{j}),\]
\[
A(R)=\sum A(R_{j}).\]
Let $R'$ be an isoperimetric region of multiplicity one and the same
area as $R$. Since isoperimetric regions remain isoperimetric under
scaling, for each region $P_{j}\geq cA_{j}^{\left(p+1\right)/\left(p+2\right)}$,
where $c=P_{R'}/A_{R'}^{(p+1)/(p+2)}$ (Prop. \ref{pro:more area/less perimeter argument works}).
By concavity\[
P(R)=\sum P(R_{j})\geq c\sum\left(A(R_{j})^{\frac{p+1}{p+2}}\right)\geq c\left(\sum A(R_{j})\right)^{\frac{p+1}{p+2}}=P(R'),\]
with equality only if $R$ has multiplicity one. Therefore no isoperimetric
region can have multiplicity greater than one.\end{proof}
\begin{rem*}
For $p<-2$ the isoperimetric function $I(A)=cA^{(p+1)/(p+2)}$ is
now convex. Therefore, regions with multiplicity greater than one
can do arbitrarily better than regions with multiplicity one.\end{rem*}
\begin{cor}
\label{cor:semi-circles minimize for n*pi sector}In the $n\pi$-sector
$(n\in\mathbb{Z})$, semicircles through the origin are uniquely isoperimetric.\end{cor}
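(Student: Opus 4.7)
The plan is to fold the $n\pi$-sector onto the $\pi$-sector by a piecewise isometry and combine uniqueness of the semicircle in the $\pi$-sector (Prop.~\ref{pro:semi-circles minimize in half plane}) with the multiplicity-irrelevance result (Prop.~\ref{pro: minimizers minimize in sector w/multiplicity}). First I would define $\phi$ from the $n\pi$-sector to the $\pi$-sector by sending $(r,\theta)$ with $\theta\in[(k-1)\pi,k\pi]$ to $(r,\theta-(k-1)\pi)$ when $k$ is odd and to $(r,k\pi-\theta)$ when $k$ is even. Since $\phi$ preserves $r$ it preserves the density $r^{p}$, and on each $\pi$-strip $\phi$ is an isometry. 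Given an isoperimetric region $R$ in the $n\pi$-sector bounding weighted area $A$, let $R' = \phi_{*} R$ be the pushforward, regarded as a region with multiplicity (at most $n$) in the $\pi$-sector. Weighted area is preserved with multiplicity, so $A(R')=A$. Any piece of $\partial R$ lying on an interior ray $\theta=k\pi$ with $1\leq k\leq n-1$ maps into the $\pi$-sector boundary $\theta=0$ or $\theta=\pi$ and hence does not contribute to the perimeter of $R'$; every other piece of $\partial R$ maps isometrically to $\partial R'$. Therefore $P(R')\leq P(R)$.

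Next I would apply Proposition~\ref{pro: minimizers minimize in sector w/multiplicity} to $R'$: in the $\pi$-sector, multiplicity confers no benefit, so $P(R')\geq cA^{(p+1)/(p+2)}$, where $c$ is the isoperimetric constant realized uniquely by the semicircle through the origin (Prop.~\ref{pro:semi-circles minimize in half plane}). Chaining the inequalities yields $P(R)\geq P(R')\geq P_{\mathrm{semi}}(A)$, and since a semicircle through the origin sitting inside any one of the sub-sectors $[(k-1)\pi,k\pi]$ is a valid region in the $n\pi$-sector attaining this bound, semicircles are isoperimetric.

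For uniqueness I would trace the equality cases. Strict concavity of $t\mapsto t^{(p+1)/(p+2)}$ in the proof of Proposition~\ref{pro: minimizers minimize in sector w/multiplicity}, together with uniqueness in the $\pi$-sector, forces the multiplicity decomposition of $R'$ to reduce to a single layer equal to the unique semicircle minimizer; the equality $P(R)=P(R')$ then forces $\partial R$ to avoid the interior rays $\theta=k\pi$, so $R$ itself is a semicircle lying in a single $\pi$-sub-sector. The step I expect to be most delicate is the bookkeeping for the folding $\phi$ on the interior rays---verifying rigorously, via the pushforward formula for integer-valued BV functions, that the perimeter inequality $P(R')\leq P(R)$ holds with the correct multiplicity convention---together with confirming that the equality cases in the concavity step really do pin down multiplicity one and agreement with the unique semicircle minimizer.
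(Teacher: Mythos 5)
Your proposal is correct and is essentially the paper's own argument: fold the $n\pi$-sector onto the half plane (the paper uses $r(\theta)\mapsto r(\theta\bmod\pi)$, you use an accordion of reflections, an immaterial difference since the density is radial), view the image as a region with multiplicity, and conclude from Proposition~\ref{pro: minimizers minimize in sector w/multiplicity} together with the uniqueness of the semicircle in the half plane (Prop.~\ref{pro:semi-circles minimize in half plane}). Your extra care with the perimeter bookkeeping on the fold rays and the equality/multiplicity-one case is a welcome elaboration of what the paper leaves implicit, not a different method.
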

\begin{proof}
Assume there is an isoperimetric curve $r(\theta)$ bounding a region
$R$ which is not the semicircle. Consider $R$ as a region with multiplicity
in the half plane by taking $r(\theta)\rightarrow r(\theta\mbox{\text{ }}mod\,\pi)$.
Since semicircles are uniquely isoperimetric in the half plane, by
Proposition \ref{pro: minimizers minimize in sector w/multiplicity},
$r$ cannot be isoperimetric. This implies that $r$ could not have
been isoperimetric in the $n\pi$-sector.\end{proof}
\begin{prop}
\label{pro:Circles are stable until pi/root p+1}In the $\theta_{0}$-sector
with density $r^{p}$, $p>-1$, circular arcs about the origin have
positive second variation if and only if $\theta_{0}<\pi/\sqrt{p+1}$.
When $p<-1$, circular arcs about the origin always have positive
second variation.\end{prop}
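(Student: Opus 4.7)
The plan is to compute the second variation of the weighted perimeter at the circular arc $r(\theta) = R$ under area-preserving perturbations parameterized in polar form as
\[
r(\theta) = R + \epsilon u(\theta) + \epsilon^{2} v(\theta) + O(\epsilon^{3}),
\]
where $u, v \in C^{\infty}([0,\theta_{0}])$ carry no boundary constraint (the endpoints of the curve slide freely along the two rays of the sector). The weighted area and perimeter are
\[
A(\epsilon) = \int_{0}^{\theta_{0}} \frac{r(\theta)^{p+2}}{p+2}\, d\theta, \qquad P(\epsilon) = \int_{0}^{\theta_{0}} r(\theta)^{p} \sqrt{r(\theta)^{2} + r'(\theta)^{2}}\, d\theta.
\]

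I will first expand $A(\epsilon)$ and use $A(\epsilon) \equiv A(0)$ to extract the two area constraints: the $O(\epsilon)$ term yields $\int_{0}^{\theta_{0}} u\, d\theta = 0$, and the $O(\epsilon^{2})$ term yields $\int v = -\frac{p+1}{2R}\int u^{2}$. I will then expand $P(\epsilon)$ to order $\epsilon^{2}$ by separately expanding $r^{p}$ and $\sqrt{r^{2}+(r')^{2}}$ and multiplying. The $O(\epsilon)$ term is $(p+1)R^{p}\int u$, which vanishes by the first constraint (confirming the arc is critical). Substituting the $O(\epsilon^{2})$ area relation into the $O(\epsilon^{2})$ terms of $P$ makes the explicit $\int v$ contribution combine with the other pieces to give, up to a positive constant,
\[
\delta^{2}P(u) \;=\; R^{p-1}\int_{0}^{\theta_{0}} \bigl[(u')^{2} - (p+1)u^{2}\bigr]\, d\theta.
\]

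It then remains to determine when this quadratic form $Q(u)$ is positive for every nonzero $u$ with $\int u\, d\theta = 0$. Since $u$ is unconstrained at the endpoints, the natural boundary conditions produced by the constrained variational problem are Neumann: $u'(0) = u'(\theta_{0}) = 0$. The eigenfunctions of $-u'' = \lambda u$ with Neumann boundary conditions on $[0,\theta_{0}]$ are $\cos(n\pi\theta/\theta_{0})$, $n = 0, 1, 2, \ldots$, with eigenvalues $(n\pi/\theta_{0})^{2}$; the mean-zero constraint kills only $n = 0$, so $Q(u) \geq \bigl[(\pi/\theta_{0})^{2} - (p+1)\bigr]\int u^{2}$, with equality attained by the admissible test function $u(\theta) = \cos(\pi\theta/\theta_{0})$. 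Hence for $p > -1$, positivity of the second variation is equivalent to $(\pi/\theta_{0})^{2} > p+1$, i.e.\ $\theta_{0} < \pi/\sqrt{p+1}$. For $p \leq -1$ the coefficient $-(p+1)$ is nonnegative, so both terms in $Q$ are nonnegative and $(u')^{2}$ vanishes only on constants (eliminated by the mean-zero condition), giving unconditional positivity.

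The main obstacle is the bookkeeping in the second-order expansion: the function $v$ appears in both $A$ and $P$, and one must confirm that after using the $O(\epsilon^{2})$ area identity to substitute for $\int v$, the scattered contributions from $r^{p}$, from $\sqrt{r^{2}+(r')^{2}}$, and from the cross-term in their product assemble into the clean expression $(u')^{2} - (p+1)u^{2}$. A secondary subtlety is justifying Neumann as the correct natural boundary condition; this follows from the fact that the endpoints of the varied curve may translate freely along the two bounding rays, so no endpoint term survives the integration by parts in the Euler--Lagrange derivation.
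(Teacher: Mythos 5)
Your proposal is correct, and I verified the key computation: with $r=R+\epsilon u+\epsilon^{2}v$, the area constraints $\int u=0$ and $\int v=-\tfrac{p+1}{2R}\int u^{2}$ do collapse the second-order terms of $P$ to $\tfrac{1}{2}R^{p-1}\int\bigl[(u')^{2}-(p+1)u^{2}\bigr]d\theta$, and the spectral step is sound, since the infimum of $\int (u')^{2}/\int u^{2}$ over mean-zero functions with free endpoint values is the first nonzero Neumann eigenvalue $(\pi/\theta_{0})^{2}$, attained by the admissible mode $\cos(\pi\theta/\theta_{0})$, which also furnishes the ``only if'' direction. However, your route differs from the paper's. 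The paper does not compute the second variation directly in the sector: it doubles the $\theta_{0}$-sector to the $2\theta_{0}$-cone (using its sector--cone correspondence), writes the cone as a disk of revolution with metric $ds^{2}=dr^{2}+(r\theta_{0}/\pi)^{2}d\theta^{2}$ and density $e^{\psi}$ with $\psi=p\log r$, and invokes the criterion of Engelstein \emph{et al.} that circles of revolution have positive second variation if and only if $f'(r)^{2}-f(r)f''(r)-f(r)^{2}\psi''(r)<1$, which here reads $(p+1)(\theta_{0}/\pi)^{2}<1$. The trade-off: the paper's argument is two lines once the cited theorem is accepted, and on the cone there are no boundary terms to worry about (though one implicitly uses that the lowest destabilizing mode on the cone is reflection-symmetric, so it descends to the sector); your argument is self-contained, makes the quadratic form and the explicit destabilizing mode $\cos(\pi\theta/\theta_{0})$ visible, and handles the free-boundary (Neumann) issue head-on rather than by passing to the doubled space. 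Both yield the same threshold $\theta_{0}=\pi/\sqrt{p+1}$ for $p>-1$ and unconditional positivity for $p<-1$.
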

\begin{proof}
By Proposition \ref{pro:cone-sector equivalence for iso. regions}
we think of the $\theta_{0}$-sector as the cone of angle $2\theta_{0}$.
A circle of radius $r$ in the $\theta_{0}$-sector corresponds with
a circle about the axis with radius $r\theta_{0}/\pi$, giving the
cone the metric $ds^{2}=dr^{2}+(r\theta_{0}/\pi)^{2}d\theta^{2}$.
For a smooth Riemannian disk of revolution with metric $ds^{2}=dr^{2}+f(r)^{2}d\theta^{2}$
and density $e^{\psi(r)}$, circles of revolution at distance $r$
have positive second variation if and only if $Q(r)=f'(r)^{2}-f(r)f''(r)-f(r)^{2}\psi''(r)<1$
\cite[Thm. 6.3]{Engelstein et al - Isoperimetric problems on the sphere and on surfaces with density}.
This corresponds to $(\theta_{0}/\pi)^{2}+p(\theta_{0}/\pi)^{2}<1$,
which, for $p<-1$, always holds. When $p>-1$, the condition becomes
$\theta_{0}<\pi/\sqrt{p+1}$, as desired.
\end{proof}
The following theorem is the main result of this paper.
\begin{thm}
\label{thm:Major theorem on MWD}Given $p>0$, there exist $0<\theta_{1}<\theta_{2}<\infty$
such that in the $\theta_{0}$-sector with density $r^{p}$, isoperimetric
curves are (see Fig. \ref{fig:Possible minimizers for sector w/density r^p}):

1. for $0<\theta_{0}<\theta_{1}$, circular arcs about the origin,

2. for $\theta_{1}<\theta_{0}<\theta_{2}$ , undularies,

3. for $\theta_{2}<\theta_{0}<\infty$, semicircles through the origin.

Moreover, \[
\pi/(p+1)\leq\theta_{1}\leq\pi/\sqrt{p+1},\]
\[
\pi(p+2)/(2p+2)\leq\theta_{2}\leq\pi.\]
When $p=1$, $\theta_{1}\geq2>\pi/2\approx1.57$.\end{thm}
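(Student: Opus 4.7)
The plan is to assemble the theorem from the dichotomy and monotonicity results already established in this section. I would begin by defining
\[ \theta_1 = \sup\{\theta : \text{the circular arc about the origin is isoperimetric in the } \theta\text{-sector}\}, \]
\[ \theta_2 = \inf\{\theta : \text{the semicircle through the origin is isoperimetric in the } \theta\text{-sector}\}. \]
The bounds on $\theta_1$ fall out directly: Theorem \ref{thm:circular arcs minimize for pi/(p+1)} puts $\pi/(p+1)$ in the set, so $\theta_1 \geq \pi/(p+1)$, and Proposition \ref{pro:Circles are stable until pi/root p+1} shows the circle has strictly negative second variation past $\pi/\sqrt{p+1}$, hence cannot be isoperimetric there, giving $\theta_1 \leq \pi/\sqrt{p+1}$. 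For $\theta_2$, Proposition \ref{pro:semi-circles minimize in half plane} furnishes the upper bound $\theta_2 \leq \pi$, and Proposition \ref{pro:semi-circles don't minimize before pi/2*(p+2)/(p+1)} furnishes the lower bound $\theta_2 \geq \pi(p+2)/(2p+2)$.

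Next I would verify that the two candidate sets are intervals with the stated behavior at their endpoints. The remark following Proposition \ref{pro:if circle minimizes, it minimizes for all theta less} shows that the set of angles where the circle is isoperimetric is downward closed, so the circle is uniquely isoperimetric on $(0,\theta_1)$ and not isoperimetric on $(\theta_1,\infty)$. Proposition \ref{pro:The-isoperimetric-ratio is increasing for theta_0 < theta_2, constant after, semicircle uniquely} gives the analogous statement for the semicircle on either side of $\theta_2$. The separation $\theta_1 < \theta_2$ amounts to checking that $\pi/\sqrt{p+1} < \pi(p+2)/(2(p+1))$; after squaring and clearing positive factors this becomes $4(p+1) < (p+2)^2$, i.e.\ $0 < p^2$, which holds for all $p>0$.

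For the middle regime $\theta_1 < \theta_0 < \theta_2$, Lemma \ref{lem:Minimizers are circles, semicircles, or unduloids} says a minimizer must be a circular arc about the origin, a semicircle through the origin, or an undulary; the first two are ruled out by the definitions of $\theta_1$ and $\theta_2$ respectively, leaving only undularies. The sharper bound $\theta_1 \geq 2$ in the $p=1$ case is then immediate from Proposition \ref{pro:when p=00003D1, circles minimize until theta=00003D2}.

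The heavy lifting has already been invested in the supporting propositions, so no substantial obstacle remains; the one genuinely new step is the short arithmetic verification that $\pi/\sqrt{p+1} < \pi(p+2)/(2(p+1))$ for every $p>0$. This is the single point where the argument would actually break if any of the prior bounds on $\theta_1$ or $\theta_2$ were even slightly weaker, since it is what guarantees that the middle (undulary) regime is nonempty.
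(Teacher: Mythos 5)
Your proposal is correct and follows essentially the same route as the paper's proof: defining the transitional angles via the monotonicity results (Prop. \ref{pro:if circle minimizes, it minimizes for all theta less} and Prop. \ref{pro:The-isoperimetric-ratio is increasing for theta_0 < theta_2, constant after, semicircle uniquely}), reading off the four bounds from Theorem \ref{thm:circular arcs minimize for pi/(p+1)} and Propositions \ref{pro:Circles are stable until pi/root p+1}, \ref{pro:semi-circles don't minimize before pi/2*(p+2)/(p+1)}, \ref{pro:semi-circles minimize in half plane}, deducing $\theta_1<\theta_2$ from $\pi/\sqrt{p+1}<\pi(p+2)/(2p+2)$, and settling the middle regime by elimination via Lemma \ref{lem:Minimizers are circles, semicircles, or unduloids} and the $p=1$ bound via Proposition \ref{pro:when p=00003D1, circles minimize until theta=00003D2}. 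No gaps; this matches the paper's argument.
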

\begin{rem*}
For $p<-2$, circular arcs about the origin bounding area away from
the origin are isoperimetric for all sectors, and for $-2\leq p<0$
isoperimetric regions do not exist. The proofs given by Carroll \emph{et
al. }\cite[Prop. 4.3]{Carroll et al - Isoperimetric problem on planes with density}
generalize immediately from the plane to the sector. In Section \ref{sec:R^n w/ radial}
we give a generalization to $\mathbb{R}^{n},\; n\geq2$ of these statements
about $p<0$.\end{rem*}
\begin{proof}
By Lemma \ref{lem:Minimizers are circles, semicircles, or unduloids},
minimizers exist and must be circles, undularies, or semicircles.
As $\theta$ increases, if the circle is not minimizing, it remains
not minimizing (Prop. \ref{pro:if circle minimizes, it minimizes for all theta less}).
If the semicircle is minimizing, it remains uniquely minimizing (Prop.
\ref{pro:The-isoperimetric-ratio is increasing for theta_0 < theta_2, constant after, semicircle uniquely}).
Therefore transitional angles $0\leq\theta_{1}\leq\theta_{2}\leq\infty$
exist. Strict inequalities are trivial consequences of the following
inequalities:

To prove $\theta_{1}\geq\pi/(p+1)$, note that circular arcs are the
unique minimizers for $\theta_{0}=\pi/(p+1)$ (Thm. \ref{thm:circular arcs minimize for pi/(p+1)}), 

To prove $\theta_{1}\leq\pi/\sqrt{p+1}$, recall that circular arcs
do not have nonnegative second variation for $\theta_{0}>\pi/\sqrt{p+1}$
(Prop. \ref{pro:Circles are stable until pi/root p+1}). To prove
$\theta_{2}\geq\pi(p+2)/(2p+2)$, recall that semicircles cannot minimize
for $\theta_{0}<\pi(p+2)/(2p+2)$ (Prop. \ref{pro:semi-circles don't minimize before pi/2*(p+2)/(p+1)}).
To prove $\theta_{2}\leq\pi$, recall that semicircles minimize for
$\theta_{0}=\pi$ (Prop. \ref{pro:semi-circles minimize in half plane}).
Since $\pi/\sqrt{p+1}<\pi(p+2)/(2p+2)$ for all $p>0$, we have $\theta_{1}<\theta_{2}$.
For $\theta_{1}<\theta_{0}<\theta_{2}$, neither the circle nor the
semicircle minimizes, so minimizers are undularies. Finally, when
$p=1$, circles minimize for $\theta_{0}=2$ (Prop. \ref{pro:when p=00003D1, circles minimize until theta=00003D2}).
\end{proof}
We conjecture that the circle is isoperimetric as long as it has nonnegative
second variation, and that the semicircle is isoperimetric for all
angles greater than $\pi\left(p+2\right)/\left(2p+2\right)$.
\begin{conjecture}
\label{con:Conjecture on major thm for MWD}In Theorem \ref{thm:Major theorem on MWD},
the transitional angles $\theta_{1}$, $\theta_{2}$ are given by
$\theta_{1}=\pi/\sqrt{p+1}$ and $\theta_{2}=\pi(p+2)/(2p+2)$. \end{conjecture}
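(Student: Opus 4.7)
The plan is to reduce Conjecture \ref{con:Conjecture on major thm for MWD} to a study of the half-period function on the moduli space of undularies in the sector with density $r^{p}$, as already signaled by Proposition \ref{pro:If period of CGCC increasing, then...-1}. By Lemma \ref{lem:Minimizers are circles, semicircles, or unduloids} combined with Theorem \ref{thm:Major theorem on MWD}, for any $\theta_{0}\in(\theta_{1},\theta_{2})$ an isoperimetric curve must be an undulary. By the perpendicularity of isoperimetric curves to the sector boundary (Prop. \ref{des:Proposition - isoperimetric curves meet boundary perpendicular-1}), the strict monotonicity in Corollary \ref{cor:Isoperimetric curves are monotonic}, and the reflection symmetry of CGCCs across critical radii (Prop. \ref{pro:CGCC symmetric about crit pts}), an isoperimetric undulary has exactly one critical point of $r$ at each endpoint of the sector, so that the sector angle $\theta_{0}$ equals precisely the half-period $T$ of the undulary. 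Thus the set of $\theta_{0}$ for which some undulary is isoperimetric coincides with the image of $T$ on the moduli space.

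I would parametrize undularies (up to the radial scaling that preserves the density $r^{p}$) by a single parameter, for concreteness $\mu:=r_{\min}/r_{\max}\in(0,1)$, and study the half-period function $T(\mu)$. Writing the constant generalized curvature equation in polar coordinates and using the conserved first integral coming from rotational symmetry, one obtains a separable ODE of the form $(r')^{2}=F_{\mu}(r)$ whose zeros are $r_{\min}$ and $r_{\max}$, so that
\begin{equation*}
T(\mu)=\int_{r_{\min}}^{r_{\max}}\frac{dr}{\sqrt{F_{\mu}(r)}}.
\end{equation*}
Three facts would together establish the conjecture: (i) $\lim_{\mu\to 1}T(\mu)=\pi/\sqrt{p+1}$; (ii) $\lim_{\mu\to 0}T(\mu)=\pi(p+2)/(2p+2)$; (iii) $T$ is strictly monotone on $(0,1)$ with image the open interval between these two limits. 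Given (i)--(iii), Proposition \ref{pro:If period of CGCC increasing, then...-1} yields $\theta_{1}=\pi/\sqrt{p+1}$ and $\theta_{2}=\pi(p+2)/(2p+2)$.

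Step (i) is routine Sturm--Liouville analysis: linearizing the CGCC ODE about a circle of fixed radius gives a second-order linear equation whose fundamental period is $2\pi/\sqrt{p+1}$, matching exactly the threshold appearing in the second-variation calculation of Proposition \ref{pro:Circles are stable until pi/root p+1}. Step (iii) can be attacked by differentiating the above integral representation of $T(\mu)$ under the integral sign, rescaling $r=r_{\max}s$ to a fixed interval $[\mu,1]$, and checking that the resulting integrand has constant sign; this is the kind of monodromy-type computation that one does for classical elastica and Delaunay unduloids.

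The main obstacle is step (ii), which is precisely Open Question 5 in the introduction. The degeneration $\mu\to 0$ is singular because the limiting semicircle passes through the origin, where the density $r^{p}$ vanishes and the CGCC ODE becomes degenerate; a naive Taylor expansion does not converge uniformly near $r=0$. I would attempt a matched asymptotic expansion, treating the region $r\ll r_{\max}$ as an inner zone where a Euclidean-type half-circle approximation (as in the proof of Prop. \ref{pro:semi-circles don't minimize before pi/2*(p+2)/(p+1)}, which uses the tangent line at the origin at Euclidean angle $(\pi/2)(p+1)$) controls the angular swing contributed near the origin, and matching to an outer zone where the undulary behaves like a small perturbation of the tangentially approached ray. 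The angle $\pi(p+2)/(2p+2)$ is exactly the angle between the two bounding rays for such a semicircle in the original $r^{p}$ coordinates, so if the asymptotic matching can be made uniform the limit falls out; making this rigorous, with explicit estimates on the defect, is the step where I expect the real work of a proof to lie.
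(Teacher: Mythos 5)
There is a genuine gap: the statement you are asked to prove is a \emph{conjecture} in the paper, and your proposal does not close it. Your reduction is exactly the one the paper already records: Proposition \ref{pro:If period of CGCC increasing, then...-1} shows that the conjecture follows once the half periods of undularies (CGCCs with $0<\lambda<p+1$) are bounded below by $\pi/\sqrt{p+1}$ and above by $\pi(p+2)/(2p+2)$, and Remark \ref{rem:CGCC remark} already writes down the same first-integral representation of the half period $T$ as a function of the maximum radius, notes that the near-circle limit $\pi/\sqrt{p+1}$ can be proved from the second-order CGCC equation (your step (i)), and observes only \emph{numerically} that $T$ appears monotone with the conjectured range. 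So the substantive content of the conjecture is precisely your steps (ii) and (iii), and for these you offer only a plan: a matched asymptotic expansion near the degenerate semicircle limit (which you yourself identify as Open Question 5 of the paper) and a sign check after differentiating under the integral sign, neither of which is carried out or accompanied by estimates. As written, the argument proves nothing beyond what Proposition \ref{pro:If period of CGCC increasing, then...-1} and Remark \ref{rem:CGCC remark} already contain.

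Two smaller points. First, the assertion that ``the set of $\theta_{0}$ for which some undulary is isoperimetric coincides with the image of $T$'' is too strong: the image of $T$ is the set of angles at which an undulary is in \emph{equilibrium} (meets both edges perpendicularly); being a critical candidate is necessary, not sufficient, for being isoperimetric. What the reduction actually needs, and what Proposition \ref{pro:If period of CGCC increasing, then...-1} uses, is only that outside the interval $[\pi/\sqrt{p+1},\,\pi(p+2)/(2p+2)]$ no undulary is even in equilibrium, so only the circular arc or semicircle can minimize there; you should phrase the reduction that way. Second, your step (iii) in full strength (strict monotonicity of $T$) is more than the conjecture requires --- boundedness of the image suffices --- so if you do pursue this, the hard and essential analysis is the uniform control of $T$ near the semicircle limit, where the density vanishes and the ODE degenerates; until that estimate is proved with explicit error control, the conjecture remains open.
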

\begin{rem*}
This conjecture is supported by numeric evidence as in Figure \ref{fig:Transition-from-circular arc to semi-circle-1}. 
\end{rem*}

\begin{rem*}
Our isoperimetric undularies give explicit examples of the abstract
existence result of Rosales \emph{et al.} \cite[Cor. 3.13]{Rosales et al - On the Isoperimetric Problem in Euclidean Space with Density}
of isoperimetric regions not bounded by lines or circular arcs.
\end{rem*}
One potential avenue for proving this conjecture is discussed in Proposition
\ref{pro:If period of CGCC increasing, then...-1}. We also believe
the transition between the circle and the semicircle is parametrized
smoothly by curvature, which is discussed in Section \ref{sec:CGCC-1}.

Theorem \ref{thm:Polygon-r^p} will allow us to apply Theorem \ref{thm:Major theorem on MWD}
to the problem of classifying isoperimetric regions of small area
in planar polygons with density $r^{p}$, as suggested to us by Antonio
Ca$\tilde{\text{n}}$ete. It is similar to an isoperimetric theorem
of Morgan on regions of small area in polytopes \cite[Thm. 3.8]{Morgan-In Polytopes Small Balls about Some Vertex Minimize Perimeter }.
First we need a lemma:
\begin{lem}
\label{lem:Polygon-Ineq-With-Density}Consider a polygon in the plane
with density $f$. Then, for any region $R$ inside the polygon where
the density is bounded away from 0 and $\infty$, there exists a constant
$c>0$ such that any sub-region $S$ bounding Euclidean area less
than half the Euclidean area of the polygon has weighted area $A$
and weighted perimeter $P$ satisfying\[
P\geq cA^{1/2}\]
\end{lem}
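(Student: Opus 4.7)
The plan is to combine the classical (unweighted) relative isoperimetric inequality on a Lipschitz planar domain with the uniform bounds on $f$ over $R$. The exponent $1/2$ in the desired bound matches the Euclidean isoperimetric exponent exactly, so no rescaling of the inequality is required; we only need to pay factors of the density bounds.

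First, since the polygon $\Omega$ is a Lipschitz domain, the classical relative isoperimetric inequality supplies a constant $c'>0$ such that every measurable subset $S\subseteq\Omega$ whose Euclidean area $A_0(S)$ is at most $A_0(\Omega)/2$ satisfies
\[
P_0(S)\geq c'\,A_0(S)^{1/2},
\]
where $A_0$ and $P_0$ denote unweighted area and perimeter, with $P_0$ counting only the portion of $\partial S$ interior to $\Omega$. I would cite this from a standard reference (e.g.\ Morgan's \emph{Geometric Measure Theory}).

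Next, using the hypothesis that $f$ is bounded on $R$ with constants $0<m\leq f\leq M<\infty$, any sub-region $S\subseteq R$ has weighted area $A(S)=\int_S f\,dA_0\leq M\,A_0(S)$ and weighted perimeter $P(S)=\int_{\partial S} f\,dP_0\geq m\,P_0(S)$. Substituting into the Euclidean inequality,
\[
P(S)\geq m\,P_0(S)\geq m c'\,A_0(S)^{1/2}\geq \frac{mc'}{\sqrt{M}}\,A(S)^{1/2},
\]
so the conclusion holds with $c=mc'/\sqrt{M}$.

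There is no real obstacle here: the argument is just a bookkeeping transfer through the density bounds, and the only mild subtlety is the convention that "sub-region" means a subset of $R$ (so that the density bounds apply) rather than an arbitrary subset of $\Omega$. If $S$ were allowed to extend outside $R$, one would instead localize the relative isoperimetric inequality to $R$ (which is itself a region with sufficiently nice boundary) and run the same argument inside $R$.
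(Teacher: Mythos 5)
Your proposal is correct and is essentially the paper's own argument: both use the bounds $P\geq m P_0$, $A\leq M A_0$ together with the standard Euclidean relative isoperimetric inequality (cited from Morgan's \emph{Geometric Measure Theory}) for subsets bounding at most half the Euclidean area, and combine the three inequalities to get $P\geq cA^{1/2}$.
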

\begin{proof}
Let $P'$ and $A'$ denote the Euclidean perimeter and area of $S$
and let $m$ be the minimum of the density in $R$ and $M$ the maximum
of the density in $R$. Then $P\geq m\cdot P'$ and $A\leq M\cdot A'$.
By a standard result $P'\geq cA'^{1/2}$ for some constant $c$ (see,
e.g., \cite[p. 112]{Morgan - GMT}), and combining the three inequalities
we see that for a new constant the desired inequality holds. \end{proof}
\begin{thm}
\label{thm:Polygon-r^p}Let $B$ be a planar polygon containing the
origin and let $p>0$. Let $M$ be:
\begin{itemize}
\item The plane with density $r^{p}$ if the origin is contained in the
interior of $B$
\item The half plane with density $r^{p}$ if the origin is contained in
the boundary of $B$ but not at a vertex.
\item The $\theta_{0}$-sector with density $r^{p}$ if the origin is a
vertex of $B$ of angle $\theta_{0}$. 
\end{itemize}
Then for sufficiently small areas, isoperimetric regions in $B$ with
density $r^{p}$ are the same as those in $M$ in the natural sense.\end{thm}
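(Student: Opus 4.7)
The plan is to show that for very small prescribed area, an isoperimetric region in $B$ must concentrate near the origin, at which point the local picture of $B$ coincides with the model space $M$, and the known minimizers of $M$ (from Theorem \ref{thm:Major theorem on MWD}, together with its $p<0$ analogues or Euclidean comparisons) become candidates on both sides.

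First I would record the two competing perimeter-area scalings. By Proposition \ref{pro:more area/less perimeter argument works} (applied to whichever of plane, half plane, or $\theta_0$-sector $M$ happens to be), the isoperimetric profile of $M$ satisfies $I_M(A)=c_M A^{(p+1)/(p+2)}$; the model minimizer of area $A$ has diameter $O(A^{1/(p+2)})$ by the radial scaling argument, so it fits inside $B$ once $A$ is small. On the other hand, for any region $R\subset B$ whose weighted center of mass lies a fixed distance $\delta>0$ from the origin, the density $r^p$ is bounded above and below on a neighborhood containing $R$, so Lemma \ref{lem:Polygon-Ineq-With-Density} yields $P(R)\geq c' A(R)^{1/2}$. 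Since $p>0$ gives $(p+1)/(p+2)>1/2$, the model profile $c_M A^{(p+1)/(p+2)}$ is strictly smaller than $c'A^{1/2}$ once $A$ is sufficiently small.

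Next I would use this to localize. Fix a small ball $B_\delta$ around the origin contained in $B$ and consider an isoperimetric region $R$ in $B$ of area $A$. Decompose $R=R_1\cup R_2$ with $R_1=R\cap B_\delta$ and $R_2=R\setminus B_\delta$. The part $R_2$ sits in a region where the density is bounded away from $0$ and $\infty$, so $P(R_2)\geq c' A(R_2)^{1/2}$ by the lemma. Comparing $R$ to the model minimizer of area $A$ placed at the origin (which, for small $A$, fits inside $B_\delta\cap B$ and sits in a part of $B$ identical to $M$), we get $P(R)\leq I_M(A)=c_M A^{(p+1)/(p+2)}$. If a positive fraction of $A$ lies in $R_2$, the scaling comparison above forces $c'A(R_2)^{1/2}\leq P(R)\leq c_M A^{(p+1)/(p+2)}$, which fails for small enough $A$. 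Hence $A(R_2)\to 0$, and a further argument (using that pushing $R_2$ across the boundary of $B_\delta$ and absorbing it near the origin, in the style of the rearrangements in Remark after Theorem \ref{thm:circular arcs minimize for pi/(p+1)}, only decreases weighted perimeter for small enough area) lets me assume $R\subset B_\delta$.

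Finally I would conclude. Near the origin, $B$ agrees with $M$: in the interior case $B$ contains a full Euclidean disk, in the edge case $B$ contains a half-disk, and in the vertex case $B$ coincides with the $\theta_0$-sector up to radius $\delta$. Therefore any region in $B_\delta\cap B$ is a valid competitor in $M$ and vice versa, so $I_B(A)=I_M(A)$ and the isoperimetric regions in $B$ are precisely the model minimizers characterized by Theorem \ref{thm:Major theorem on MWD} (translated/rotated as appropriate so that the origin of the model sits at the corresponding point of $B$). The main obstacle is the localization step: rigorously ruling out isoperimetric regions that straddle $\partial B_\delta$, i.e., verifying that a small piece of $R$ sitting away from the origin can always be excised and replaced by additional area concentrated near the origin with a net decrease in weighted perimeter. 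This requires the quantitative scaling comparison above together with a continuity argument showing that for area below an explicit threshold the model minimizer strictly beats any hybrid configuration.
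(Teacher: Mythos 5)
Your outline gets the right heuristic (two competing scalings, $A^{(p+1)/(p+2)}$ near the origin versus $A^{1/2}$ away from it, so small mass must concentrate at the origin), but the step you yourself flag as ``the main obstacle'' is a genuine gap, and it is exactly the step the paper's proof is built to handle. Two concrete problems. First, in your decomposition $R=R_1\cup R_2$ with $R_2=R\setminus B_\delta$, the chain $c'A(R_2)^{1/2}\leq P(R_2)\leq P(R)$ is not justified as written: the boundary of $R_2$ contains the interface $R\cap\partial B_\delta$, which is not part of $\partial R$ and is not counted in $P(R)$, and Lemma \ref{lem:Polygon-Ineq-With-Density} bounds perimeter of subregions of the polygon, not relative perimeter in the annular domain $B\setminus B_\delta$ with $\partial B_\delta$ as free boundary. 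This can be repaired with a relative isoperimetric inequality in $B\setminus B_\delta$, but it needs to be stated and proved, not read off the lemma. Second, and more seriously, your argument only yields $A(R_2)$ small, while the theorem requires $R$ to coincide exactly with a model minimizer; the excision move (``push $R_2$ across $\partial B_\delta$ and absorb it near the origin'') is not supplied by anything you cite --- the remark after Theorem \ref{thm:circular arcs minimize for pi/(p+1)} is a different comparison and gives no such rearrangement with a perimeter decrease for a straddling region.

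The paper avoids straddling configurations altogether rather than excising them. It fixes two circles $C_1\subset C_2$ about the origin such that any curve running from $C_1$ to $C_2$ has weighted length exceeding $P_1$; hence every boundary curve of a low-perimeter region lies entirely inside $C_2$ or entirely outside $C_1$, and a component-by-component analysis (using that a component cannot be the ``large'' side of its boundary curves once the area is small) shows each connected component of an isoperimetric $R$ lies wholly in $C_2$ or wholly outside $C_1$. Components outside $C_1$ are then replaced by circles about the origin of the same weighted area, which is strictly better by the $A^{1/2}$ versus $A^{(p+1)/(p+2)}$ comparison; the crucial point you are missing is that the resulting configuration may overlap itself, and the paper legitimizes the comparison via Proposition \ref{pro: minimizers minimize in sector w/multiplicity} (isoperimetric regions with multiplicity in $M$ do not beat multiplicity-one minimizers). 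Some mechanism of this kind --- either the annulus barrier plus the multiplicity argument, or a genuinely worked-out excision-and-absorption lemma with a quantitative perimeter gain --- is needed; without it your proof does not close.
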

\begin{proof}
Let $C_{1}$, and $C_{2}$ be circles (or semi-circles or circular
arcs, depending on $M$) about the origin of radii $r_{1}<r_{2}$
and such that the following hold:
\begin{enumerate}
\item $C_{2}$ (and therefore $C_{1}$) intersects the polygon only at the
side(s) containing the origin, or not at all if the origin is on the
interior of $B$.
\item The weighted length of any curve between $C_{1}$ and $C_{2}$ is
greater than $P_{1}$ where $P_{1}$ is the weighted perimeter of
$C_{1}$. 
\item $C_{2}$ contains less than half of the Euclidean area of $B$.
\end{enumerate}
Now, suppose we have a single closed curve inside $B$ (by which we
mean either closed in the traditional sense or intersecting the boundary
at both endpoints) of weighted length less than $P_{1}$. Then because
of $2$ in the above list, the curve must lie completely inside of
$C_{2}$ or completely outside of $C_{1}$. Because the density is
bounded away from 0 and $\infty$ outside of $C_{1}$, if we take
the length of the curve to be small enough that it cannot contain
all of $C_{1}$ on its side of smaller Euclidean area, then by applying
Lemma \ref{lem:Polygon-Ineq-With-Density} we find that for any such
curve completely outside of $C_{1}$, $P\geq c_{1}A^{1/2}$ for some
constant $c_{1}>0$ where $P$ and $A$ are the weighted perimeter
and weighted area of the region bounded by the curve with smaller
Euclidean area. On the other hand, we see by 3 in the above list that
any curve contained completely in $C_{2}$ bounds its region of smaller
Euclidean area completely inside $C_{2}$, and so by Prop. \ref{pro:more area/less perimeter argument works}
satifies $P\geq c_{2}A^{(p+1)/(p+2)}$ for some constant $c_{2}>0$
where $P$ and $A$ are as before. 

Now, $B$ has finite weighted area, and so by standard geometric measure
theory isoperimetric regions exist for all possible weighted areas.
So, let $R$ be an isoperimetric region of weighted area $A_{0}$
smaller than the weighted area of $C_{1}$, and let $P_{0}$ be the
weighted perimeter of a circle about the origin of weighted area $A_{0}$.
We will take $A_{0}$ to be small enough such that for any closed
curve in $B$ (again closed in the sense that it separates two regions
in $B$) with weighted length less than $P_{0}$ the region it bounds
with smaller weighted area also has smaller Euclidean area. Now, we
note that $R$ must have weighted perimeter less than or equal to
$P_{0}$ and thus less than $P_{1}$. 

Consider a single connected component of $R$. Suppose there is no
curve in its boundary such that the entire component is contained
in the region of smaller weighted area bounded by the curve. Then
the weighted area of the component is equal to the total weighted
area of $B$ minus the sum of all the smaller of the weighted areas
bounded by the curves forming the boundary. Because of the inequalities
we have proven above for weighted perimeter in terms of weighted area
in $B$, if we take $A_{0}$ to be small enough then any such component
must have weighted area larger than that of $A_{0}$.

So, any single connected component of $R$ is contained completely
in the region of smaller weighted area bounded by one of its boundary
curves, and thus is contained completely in $C_{2}$ or completely
in $P\backslash C_{1}$. Now, a circle about the origin of weighted
area $A$ has weighted perimeter $P$ such that $P=cA^{(p+1)/(p+2)}$
for some constant $c$ independent of the radius. Since any region
outside of $C_{1}$ satisfies $P\geq c_{1}A^{1/2}$, we see that for
$A\leq A_{0}$ and $A_{0}$ sufficiently small the circle about the
origin of the same weighted area will have strictly less weighted
perimeter than any component of weighted area $A$ lying outside $C_{1}$.
Thus, by replacing every component of $R$ that lies in $P\backslash C_{1}$
with a circle about the origin of the same weighted area, we obtain
a region $R'$ with multiplicity contained in $C_{2}$ having the
same weighted area as $R$ but less weighted perimeter than $R$.
This same region exists as a region with multiplicity in $M$, and
thus by Prop. \ref{pro: minimizers minimize in sector w/multiplicity},
has weighted perimeter greater than or equal to an isoperimetric region
with the same weighted area in $M$. Since such a region will also
exist in $P$ (as long as we have chosen $A_{0}$ small enough) with
the same weighted area and perimeter, we see that such a region is
isoperimetric, and in fact that $R$ must have been such a region
in the first place (otherwise it would have had strictly larger weighted
perimeter). 
\end{proof}
Using Proposition \ref{pro:Power change of coordinates} and our results
in the sector (Thm. \ref{thm:Major theorem on MWD} and the remark
following it about $p<0$) we obtain the following proposition:
\begin{prop}
\label{pro:MainTheoremPlanePerimeterDensity}In the plane with perimeter
density $r^{k}$, $k>-1$, and area density $r^{m}$ the following
are isoperimetric curves:
\begin{enumerate}
\item for $m\in(-\infty,-2]\cup(2k,\infty)$ there are none.
\item for $k\in(-1,0)$ and $m\in(-2,2k)$ the circle about the origin. 
\item for $k\in[0,\infty)$ and $m\in(-2,k-1]$ the circle about the origin.
\item for $k\in[0,\infty)$ and $m\in[k,2k]$ pinched circles through the
origin.
\end{enumerate}
\end{prop}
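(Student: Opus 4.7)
The plan is to use the coordinate change of Proposition \ref{pro:Power change of coordinates} to reduce the plane (viewed as a $2\pi$-sector) to a sector with equal perimeter and area density, and then to invoke Theorem \ref{thm:Major theorem on MWD} together with the remark about $p<0$ following it. Setting $n=m-k+1$ equates the transformed density exponents, sending the problem to a sector of angle $2\pi|m-k+1|$ with common density $r^{p'}$, where $p'=(2k-m)/(m-k+1)$. Circles about the origin pull back to circles about the origin, while curves through the origin in the new sector correspond to ``pinched circles through the origin'' in the plane, so isoperimetric conclusions transfer between the two settings.

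For Case 1 a direct computation shows $p'\in[-2,0)$: the sub-range $m>2k$ gives $p'\in(-1,0)$ and $m\leq -2$ gives $p'\in[-2,-1)$, and in either situation Proposition \ref{pro:Non-existence-pin-2,0} rules out isoperimetric regions in the new sector and hence in the plane. For Case 3, and also for the portion of Case 2 with $m<k-1$ (so $n<0$), the computation gives $p'<-2$; the remark after Theorem \ref{thm:Major theorem on MWD} says circles about the origin bounding area on the outside are the unique minimizers in the new sector, and these correspond to circles about the origin in the plane. For Case 4, $p'\in[0,k]$ and the new sector angle $2\pi n\geq 2\pi$ exceeds $\theta_2(p')\leq\pi$, so Theorem \ref{thm:Major theorem on MWD} gives semicircles through the origin as minimizers in the new sector, which pull back to pinched circles through the origin in the plane.

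The main obstacle will be the remaining sub-range of Case 2 with $k-1<m<2k$ and $k\in(-1,0)$, where the above reduction yields $p'>0$ on a new sector of angle $2\pi(m-k+1)<2\pi$ that can exceed the lower bound $\pi/(p'+1)$ for $\theta_1$ coming from Theorem \ref{thm:circular arcs minimize for pi/(p+1)}. I would address this case with the alternative choice $n=k+1$, which sends the plane to a sector of angle $2\pi(k+1)$ with Euclidean perimeter and strictly decreasing area density $r^{(m-2k)/(k+1)}$, and then adapt the decreasing-density comparison from the remark following Theorem \ref{thm:circular arcs minimize for pi/(p+1)}: replace a candidate region by a circular arc about the origin of the same Euclidean area (which has no greater Euclidean perimeter yet strictly greater weighted area), then shrink to match the weighted area. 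This comparison works cleanly whenever $2\pi(k+1)\leq\pi$, and for the hardest range $k\in(-1/2,0)$ a more delicate symmetrization argument is needed. The boundary cases $m=k-1$ (where $n=0$ and the first coordinate change degenerates) and $m\in\{-2,2k\}$ should follow by continuity of the isoperimetric ratio in $(k,m)$.
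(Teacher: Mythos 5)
Your overall route is exactly the paper's: the proposition is derived by applying Proposition \ref{pro:Power change of coordinates} to equalize the two densities and then quoting Theorem \ref{thm:Major theorem on MWD} and the remark after it about $p<0$. However, your reduction carries a factor-of-two error in the sector angle, and that error is what creates your ``hard case.'' The plane is the cone of angle $2\pi$, so by Proposition \ref{pro:cone-sector equivalence for iso. regions} it corresponds to the $\pi$-sector, and the power map with $n=m-k+1$ then lands in the sector of angle $\pi|m-k+1|$, not $2\pi|m-k+1|$, with common density $r^{p'}$, $p'=(2k-m)/(m-k+1)$. (As a consistency check, the remark after Conjecture \ref{con:Conjecture in plane with different densities} identifies the plane with perimeter density $r^{p}$ and Euclidean area with the sector of angle $\pi/(p'/2+1)=\pi(1-p)$, i.e.\ angle $\pi|n|$.) With the correct angle your problematic sub-range of Case 2 evaporates: since $n(p'+1)=(2k-m)+(m-k+1)=k+1$, for $k\in(-1,0)$ and $k-1<m<2k$ the reduced angle is $\pi(m-k+1)=\pi(k+1)/(p'+1)<\pi/(p'+1)\le\theta_{1}$, so circular arcs about the origin are isoperimetric by Theorem \ref{thm:circular arcs minimize for pi/(p+1)} and the remark after Proposition \ref{pro:if circle minimizes, it minimizes for all theta less}. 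As written, your proposal has a genuine gap precisely there: the unspecified ``more delicate symmetrization argument'' for $k\in(-1/2,0)$ is never supplied, and it is needed only because of the misplaced factor of $2$. Indeed, even your backup reduction with $n=k+1$ (Euclidean perimeter, strictly decreasing area density $r^{(m-2k)/(k+1)}$) then takes place in a sector of angle $\pi(k+1)<\pi$, so the comparison of the remark after Theorem \ref{thm:circular arcs minimize for pi/(p+1)} covers all of Case 2 at once, including $m=k-1$.

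Three smaller points. First, the endpoint $m=-2$ needs no continuity argument: it maps exactly to $p'=-2$, which is already covered by Proposition \ref{pro:Non-existence-pin-2,0}. Second, the endpoint $m=2k$ gives $p'=0$, so Theorem \ref{thm:Major theorem on MWD} does not literally apply and ``continuity of the isoperimetric ratio'' does not by itself identify the minimizing curves; one should instead quote the Euclidean sector of angle at least $\pi$ directly. Third, $m=k-1$ is the one genuinely degenerate value (no choice of $n$ equalizes the densities): for $k\in(-1,0)$ it is absorbed by the Euclidean-perimeter comparison above, but as the right endpoint of Case 3 (with $k>0$, reduced angle $\pi(k+1)>\pi$) it does require a separate argument, which neither your proposal nor the paper's one-line derivation spells out; flagging it, as you do, is reasonable, though ``continuity in $(k,m)$'' would at best give that the circle is a minimizer, and would need to be argued with some care.
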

We also obtain a conjecture on the area density range between $k-1$
and $k$, which is missed by Proposition \ref{pro:MainTheoremPlanePerimeterDensity},
by doing the same analysis with Proposition \ref{pro:Power change of coordinates}
and the values for $\theta_{1}$ and $\theta_{2}$ in Conjecture \ref{con:Conjecture on major thm for MWD}.
\begin{conjecture}
\label{con:Conjecture in plane with different densities} In the plane
with perimeter density $r^{k}$, $k>-1$, and area density $r^{m}$
the following are isoperimetric curves:
\begin{enumerate}
\item for $k\in[0,\infty)$ and $m\in(k-1,k-1+\frac{1}{k+1}]$ the circle
about the origin.
\item for $k\in[0,\infty)$ and $m\in(k-1+\frac{1}{k+1},k-1+\frac{k+1}{2k+1})$
undularies.
\item for $k\in[0,\infty)$ and $m\in[k-1+\frac{k+1}{2k+1},k]$ pinched
circles through the origin.
\end{enumerate}
\end{conjecture}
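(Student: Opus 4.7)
The plan is to reduce Conjecture \ref{con:Conjecture in plane with different densities} entirely to Conjecture \ref{con:Conjecture on major thm for MWD} by the change-of-coordinates device of Proposition \ref{pro:Power change of coordinates}, following exactly the template used to prove Proposition \ref{pro:MainTheoremPlanePerimeterDensity}. First, I would view the plane with perimeter density $r^{k}$ and area density $r^{m}$ as the $\pi$-sector with the same pair of densities (using the cone-sector equivalence of Proposition \ref{pro:cone-sector equivalence for iso. regions}), and then apply Proposition \ref{pro:Power change of coordinates} with the unique exponent $n$ that equalizes the two density exponents. Setting $(k+1)/n - 1 = (m+2)/n - 2$ yields $n = m-k+1$, which is positive on the range of interest $m > k-1$. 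After this change of variables, the problem becomes: identify isoperimetric curves in the $(m-k+1)\pi$-sector with single radial density $r^{p}$, where
\[
p \;=\; \frac{2k-m}{m-k+1}, \qquad p+1 \;=\; \frac{k+1}{m-k+1}.
\]
For $k\geq 0$ and $m\in(k-1,k)$ one checks $p>0$, so Conjecture \ref{con:Conjecture on major thm for MWD} is in force.

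Second, I would translate its two conjectured transitional angles back through the substitution. Setting the sector angle $\theta_{0}=(m-k+1)\pi$ equal to $\theta_{1}=\pi/\sqrt{p+1}$ and using the formula for $p+1$ above gives $(m-k+1)(k+1)=1$, i.e.\ $m=k-1+1/(k+1)$; setting $\theta_{0}=\theta_{2}=\pi(p+2)/(2p+2)$ and using $p+2=(m+2)/(m-k+1)$ together with $2p+2=2(k+1)/(m-k+1)$ yields $2(k+1)(m-k+1)=m+2$, which simplifies to $m=k-1+(k+1)/(2k+1)$. These two critical values partition $(k-1,k]$ into the three sub-intervals listed in the conjecture, and the unconditional results Prop.\ \ref{pro:MainTheoremPlanePerimeterDensity}(3) and (4) (which cover $m\leq k-1$ and $m\geq k$) dovetail exactly at the endpoints. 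The three shapes pull back cleanly under $w=z^{n}/n$: circular arcs about the origin in the sector pull back to circles about the origin in the plane (since $|w|$ depends only on $|z|$); undularies pull back to positive polar graphs of constant generalized curvature, again called undularies; and semicircles through the origin pull back to the \emph{pinched} circles through the origin appearing in the statement, the pinching reflecting the vanishing Jacobian of $z\mapsto z^{n}/n$ at the origin.

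The main obstacle, as is clear from the above, is that this reduction does not actually prove the conjecture: every step is a rigorous and reversible computation, but the input needed at the crucial step is Conjecture \ref{con:Conjecture on major thm for MWD} itself, which is unproven. In other words, a genuine proof of Conjecture \ref{con:Conjecture in plane with different densities} is precisely equivalent to a genuine proof of Conjecture \ref{con:Conjecture on major thm for MWD}, so the real work lies in establishing the exact values of $\theta_{1}$ and $\theta_{2}$ in the sector problem. The most promising route to that remains the period-monotonicity strategy of Proposition \ref{pro:If period of CGCC increasing, then...-1} (controlling constant generalized curvature curves as curvature is varied), and the present reduction shows that any such result would immediately yield the plane statement as a corollary.
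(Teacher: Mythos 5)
Your derivation is exactly the paper's: the conjecture is obtained by applying Proposition \ref{pro:Power change of coordinates} (with $n=m-k+1$, giving density exponent $p=(2k-m)/(m-k+1)$ in the $(m-k+1)\pi$-sector) and translating the conjectured angles $\theta_{1}=\pi/\sqrt{p+1}$ and $\theta_{2}=\pi(p+2)/(2p+2)$ of Conjecture \ref{con:Conjecture on major thm for MWD} back to the thresholds $m=k-1+\frac{1}{k+1}$ and $m=k-1+\frac{k+1}{2k+1}$, and your algebra checks out. You are also right that this is only a conditional reduction, which is precisely why the paper states the result as a conjecture rather than a theorem.
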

\begin{rem*}
The circular arc being isoperimetric up to the $\pi/(p/2+1)$ sector
is equivalent to the circle being isoperimetric in the Euclidean plane
with any perimeter density $r^{p}$, $p\in[0,1]$ and Euclidean area.
As $p/2+1$ is the tangent line to $\sqrt{p+1}$ at 0, this is the
best possible bound we could obtain which is linear in the denominator. 
\end{rem*}
We now consider a more analytic formulation of the isoperimetric problem
in the $\theta_{0}$-sector with density $r^{p}$, and give an integral
inequality that is equivalent to proving the conjectured angle of
$\theta_{1}$. 
\begin{prop}
\label{prop:substituting alpha in isoperimetric inequality}In the
$\theta_{0}$-sector with density $r^{p}$, $p>0$, circles about
the origin are isoperimetric if and only if the inequality

\[
\left[\int_{0}^{1}r^{\frac{p+2}{p+1}}d\alpha\right]^{\frac{p+1}{p+2}}\leq\int_{0}^{1}\sqrt{r^{2}+\frac{r'^{2}}{[(p+1)\theta_{0}]^{2}}}d\alpha\]
holds for all $C^{1}$ functions $r(\alpha)$.\end{prop}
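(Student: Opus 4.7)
The plan is to translate the isoperimetric inequality ``circles minimize perimeter for given area'' into the stated analytic form by writing out $P$ and $A$ as explicit integrals on polar graphs, then reading off the iff. By Proposition \ref{pro:.isoperimetric curves are polar graphs containing origin}, every candidate isoperimetric curve is a polar graph containing the origin, so asking that circles be isoperimetric is the same as asking the inequality $P(\gamma)\ge P_{\mathrm{circ}}(A(\gamma))$ to hold for all $C^1$ polar graphs.

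First, I would apply Proposition \ref{pro:Power change of coordinates} with $n=p+1$ to replace the $\theta_0$-sector with density $r^p$ by the $(p+1)\theta_0$-sector with Euclidean perimeter density and area density $r^{-p/(p+1)}$. In this model a polar graph $R(\phi)$ has
\[
P=\int_0^{(p+1)\theta_0}\sqrt{R^2+R'^2}\,d\phi,\qquad A=\frac{p+1}{p+2}\int_0^{(p+1)\theta_0}R^{(p+2)/(p+1)}\,d\phi.
\]
Then I would rescale the angle via $\phi=(p+1)\theta_0\,\alpha$ with $\alpha\in[0,1]$, so that $R'(\phi)=r'(\alpha)/[(p+1)\theta_0]$, obtaining
\[
P=(p+1)\theta_0\int_0^1\sqrt{r^2+\tfrac{r'^2}{[(p+1)\theta_0]^2}}\,d\alpha,\qquad A=\frac{(p+1)^2\theta_0}{p+2}\int_0^1 r^{(p+2)/(p+1)}\,d\alpha.
\]

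Next, evaluate on the constant function $r\equiv R_0$ (a circle about the origin) to obtain $P_{\mathrm{circ}}=(p+1)\theta_0\,R_0$ and $A_{\mathrm{circ}}=\frac{(p+1)^2\theta_0}{p+2}R_0^{(p+2)/(p+1)}$. Eliminating $R_0$ gives the circle relation $P_{\mathrm{circ}}=c\,A_{\mathrm{circ}}^{(p+1)/(p+2)}$ with $c=(p+1)\theta_0\bigl[(p+2)/((p+1)^2\theta_0)\bigr]^{(p+1)/(p+2)}$, and plugging this $c$ into $P\ge cA^{(p+1)/(p+2)}$ causes the prefactors to cancel cleanly, leaving exactly the inequality in the statement.

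For the iff, I would use Proposition \ref{pro:.isoperimetric curves are polar graphs containing origin}: circles about the origin are isoperimetric precisely when $P(\gamma)\ge P_{\mathrm{circ}}(A(\gamma))$ holds for every polar graph $\gamma$, and we have just shown this is the same as the displayed integral inequality holding for every $C^1$ function $r(\alpha)$ (equality being realized on the constants). The argument is essentially a bookkeeping of exponents with no real obstacle; the only nontrivial ingredients are Propositions \ref{pro:Power change of coordinates} and \ref{pro:.isoperimetric curves are polar graphs containing origin}, both already established.
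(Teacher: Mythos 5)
Your proposal is correct and takes essentially the same route as the paper: restrict to polar graphs via Proposition \ref{pro:.isoperimetric curves are polar graphs containing origin}, then rewrite the weighted perimeter and area integrals, which is exactly the ``manipulation of the integral formulas'' the paper leaves implicit (your explicit use of Proposition \ref{pro:Power change of coordinates} with $n=p+1$ and the rescaling to $\alpha\in[0,1]$ is the natural way to carry it out, and the constants cancel as you compute).
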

\begin{proof}
By Proposition \ref{pro:.isoperimetric curves are polar graphs containing origin},
an isoperimetric curve is a polar graph. The rest follows from manipulating
the integral formulas for weighted area and perimeter for polar graphs. \end{proof}
\begin{rem*}
\label{rem:Analytic proof of circles to theta<pi/p+1}This gives a
nice analytic proof that circles about the origin are isoperimetric
for $\theta_{0}=\pi/(p+1)$; letting $\theta_{0}=\pi/(p+1)$, we have\[
\left[\int_{0}^{1}r^{\frac{p+2}{p+1}}d\alpha\right]^{\frac{p+1}{p+2}}\leq\int_{0}^{1}\sqrt{r^{2}+\frac{r'^{2}}{\pi^{2}}}d\alpha.\]
When $p=0$, this corresponds to the isoperimetric inequality in the
half-plane with density $1$. As pointed out by Leonard Schulman of
CalTech, the left hand side is nonincreasing as a function of $p$,
meaning the inequality holds for all $p>0$.\end{rem*}
\begin{cor}
\label{cor:L^p norm inequality}In the $\theta_{0}$-sector with density
$r^{p}$, $p>0$, circles about the origin are isoperimetric for $\theta_{0}=\pi/\sqrt{p+1}$
if and only if the inequality\[
\left[\int_{0}^{1}r^{q}d\alpha\right]^{1/q}\leq\int_{0}^{1}\sqrt{r^{2}+\left(q-1\right)\frac{r'^{2}}{\pi^{2}}}d\alpha\]
holds for all $C^{1}$ functions $r(\alpha)$ for $1<q\leq2$.\end{cor}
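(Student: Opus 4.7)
The plan is to deduce this directly from Proposition \ref{prop:substituting alpha in isoperimetric inequality} by specializing $\theta_{0}=\pi/\sqrt{p+1}$ and performing the change of parameter $q=(p+2)/(p+1)$. No new geometry or analysis is needed; the corollary is essentially a bookkeeping exercise matching coefficients on both sides.

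First I would record the two identities that drive the substitution. Setting $q=(p+2)/(p+1)=1+1/(p+1)$ gives $q-1=1/(p+1)$ and $1/q=(p+1)/(p+2)$. With $\theta_{0}=\pi/\sqrt{p+1}$ one computes $[(p+1)\theta_{0}]^{2}=\pi^{2}(p+1)$, so
\[
\frac{1}{[(p+1)\theta_{0}]^{2}}=\frac{1}{\pi^{2}(p+1)}=\frac{q-1}{\pi^{2}}.
\]
These identities are exactly what is needed to convert the inequality of Proposition \ref{prop:substituting alpha in isoperimetric inequality} into the one stated in the corollary: the exponent $(p+2)/(p+1)$ inside the left integral becomes $q$, the outer exponent $(p+1)/(p+2)$ becomes $1/q$, and the coefficient $1/[(p+1)\theta_{0}]^{2}$ on the derivative term becomes $(q-1)/\pi^{2}$.

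Next I would check that the range of $q$ matches. As $p$ ranges over $(0,\infty)$, the map $p\mapsto 1+1/(p+1)$ is a decreasing bijection onto $(1,2)$, with $q=2$ corresponding to the limiting Euclidean case $p=0$ (where the inequality is the familiar isoperimetric inequality in the half-plane, as noted in the remark after Proposition \ref{prop:substituting alpha in isoperimetric inequality}). So the range $1<q\leq 2$ in the corollary parameterizes exactly the cases $p>0$ together with the Euclidean boundary case.

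There is really no main obstacle here; the content of the corollary is packaged in Proposition \ref{prop:substituting alpha in isoperimetric inequality}, and the only thing one must verify carefully is that the algebraic substitution is reversible, i.e.\ that every $q\in(1,2]$ corresponds to a unique $p\geq 0$ under $q=(p+2)/(p+1)$, which is immediate. I would then conclude by stating the equivalence: circles about the origin are isoperimetric in the $(\pi/\sqrt{p+1})$-sector with density $r^{p}$ if and only if the displayed inequality holds for the corresponding $q$, for all $C^{1}$ functions $r(\alpha)$.
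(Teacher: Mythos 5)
Your proposal is correct and is essentially the paper's own proof, which simply sets $q=(p+2)/(p+1)$ and $\theta_{0}=\pi/\sqrt{p+1}$ in Proposition \ref{prop:substituting alpha in isoperimetric inequality}; your verification of the coefficient identity $1/[(p+1)\theta_{0}]^{2}=(q-1)/\pi^{2}$ and of the parameter range $1<q\leq 2$ just spells out the same substitution in more detail.
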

\begin{proof}
In the inequality from Corollary \ref{prop:substituting alpha in isoperimetric inequality},
let $q=(p+2)/(p+1)$, and let $\theta_{0}=\pi/\sqrt{p+1}$.\end{proof}
\begin{rem*}
We wonder if an interpolation argument might work here. When $q=1$,
the result holds trivially (equality for all functions $r$), and
when $q=2$, the inequality follows from the isoperimetric inequality
in the half plane.
\end{rem*}

\section{\label{sec:CGCC-1}Constant Generalized Curvature Curves}

We look at constant generalized curvature curves in greater depth.
Theorem \ref{thm:classification of CGCC} classifies constant generalized
curvature curves. Proposition \ref{pro:If period of CGCC increasing, then...-1}
proves that if the half period of constant generalized curvature curves
is bounded above by $\pi(p+2)/(2p+2)$ and below by $\pi/\sqrt{p+1}$,
then Conjecture \ref{con:Conjecture on major thm for MWD} holds.
Our major tools for studying constant generalized curvature curves
are the second order constant generalized curvature equation and its
first integral. 

\begin{figure}
\begin{tabular}{ccc}
\begin{tabular}{c}
\includegraphics[width=1.4in,height=1.4in]{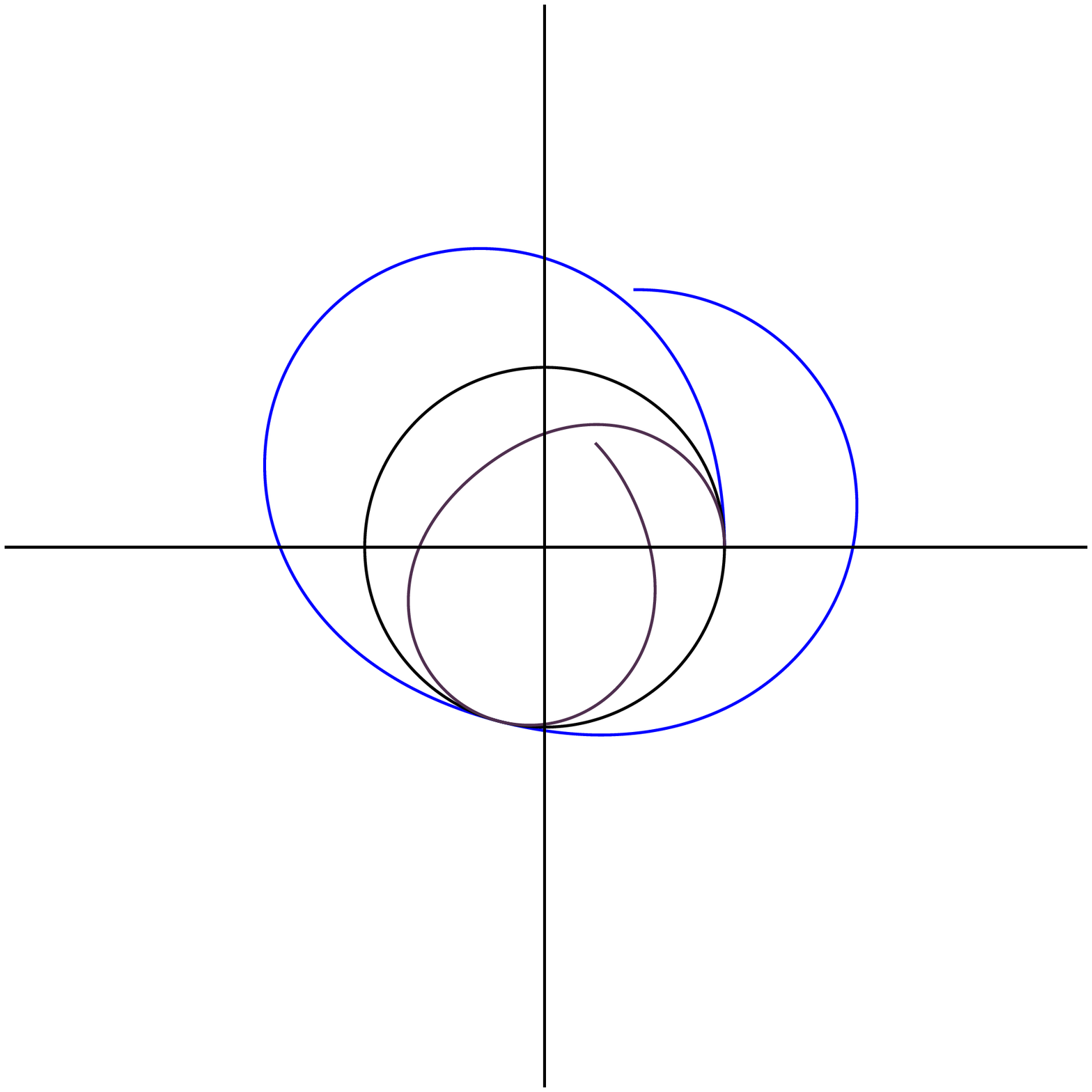}\tabularnewline
(a)\tabularnewline
\end{tabular} & \begin{tabular}{c}
\includegraphics[width=1.4in,height=1.4in]{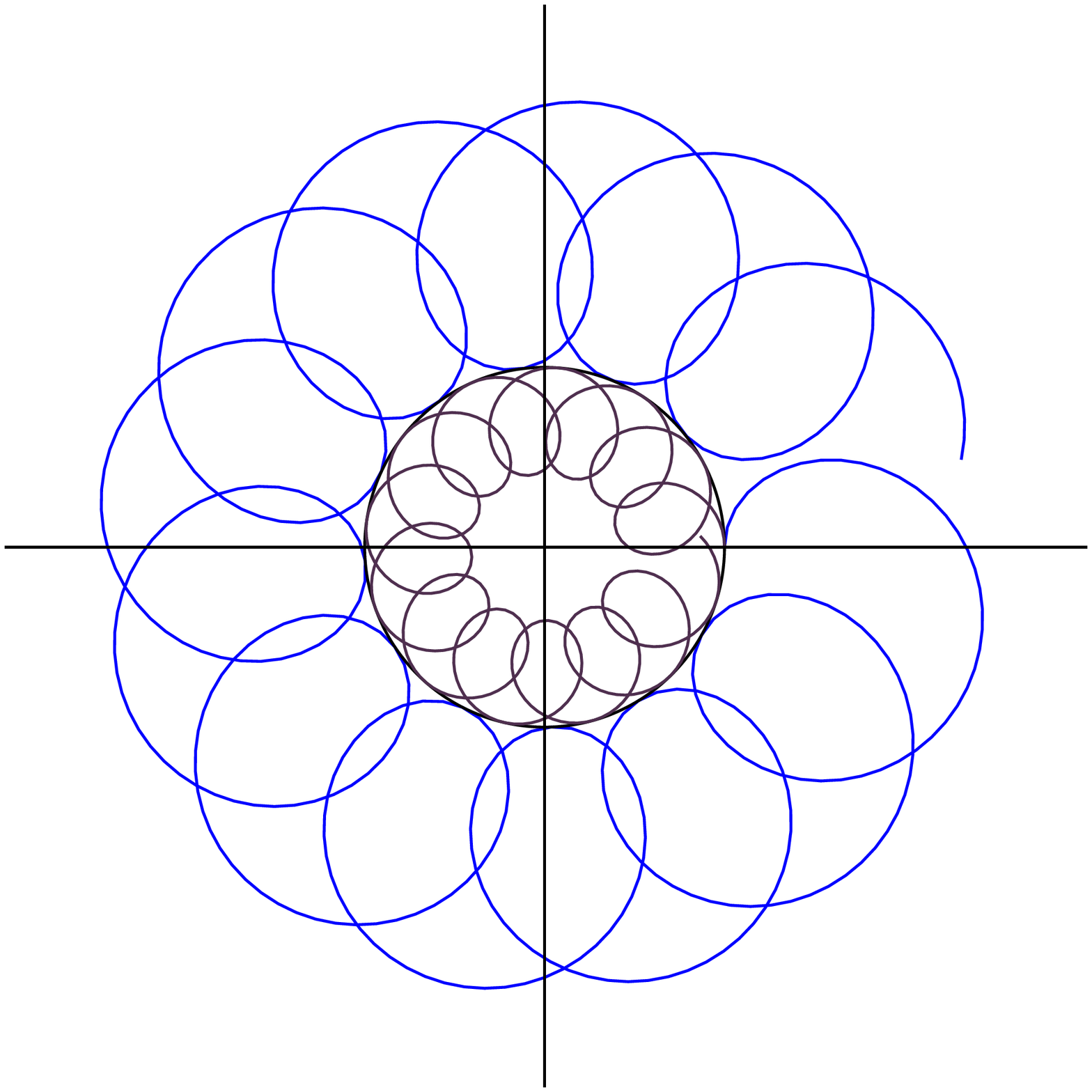}\tabularnewline
(b)\tabularnewline
\end{tabular}  & \begin{tabular}{c}
\includegraphics[width=1.4in,height=1.4in]{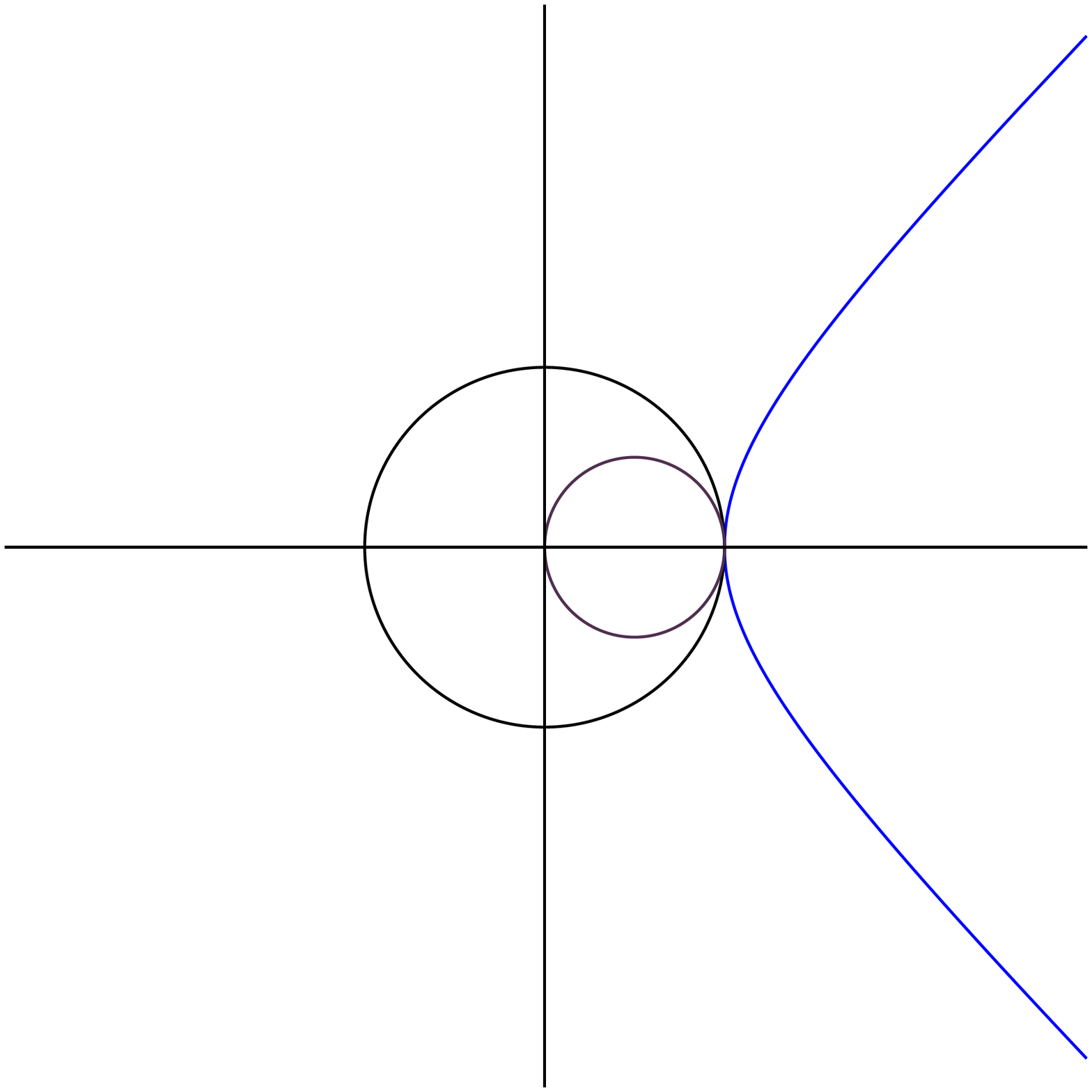}\tabularnewline
(c)\tabularnewline
\end{tabular}\tabularnewline
\end{tabular}

\caption{\label{fig:Possible-types-of CGCC}Types of constant generalized curvature
curves: a) for $0<\lambda<p+2$ nonconstant periodic polar graphs
(undularies), b) for $\lambda<0$ or $\lambda>p+2$ periodic nodoids,
c) for $\lambda=p+2$, a circle through the origin, for $\lambda=p+1$
a circle about the origin, for $\lambda=0$ a curve asymptotically
approaching the radial lines $\theta=\pm\pi/(2p+2)$.}

\end{figure}

\begin{thm}
\label{thm:classification of CGCC}In the plane with density $r^{p}$,
a curve with constant generalized curvature $\lambda$ normal at $(1,0)$
is (see Fig. \ref{fig:Possible-types-of CGCC}):
\begin{enumerate}
\item for $\lambda\in\left(0,p+2\right)-\{p+1\}$, a periodic undulary,
\item for $\lambda\notin\left[0,p+2\right]$, a periodic nodoid,
\item for $\lambda=p+2$, a circle through the origin,
\item for $\lambda=p+1$, a circle about the origin,
\item for $\lambda=0$, the geodesic \[
r(\theta)=\left(\sec((p+1)\theta)\right)^{1/(p+1)}\]
which asymptotically approaches the radial line $\theta=\pm\pi/(2p+2)$.
\end{enumerate}
\end{thm}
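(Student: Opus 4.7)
The plan is to set up the constant-generalized-curvature ODE for a polar graph $r(\theta)$ in the plane with density $r^p$, extract a first integral from the rotational symmetry, and then read off the five cases by examining the level sets of that first integral. Letting $\phi$ denote the angle between the tangent and $\hat{r}$, so that $\sin\phi = r/\sqrt{r^2+r'^2}$, the weighted perimeter minus $\lambda$ times the weighted area of the radial wedge under the graph is
\[
\int \left[r^p\sqrt{r^2+r'^2} - \frac{\lambda}{p+2}\,r^{p+2}\right]d\theta,
\]
whose Euler--Lagrange equation is exactly the CGCC condition $\lambda = \kappa - d\psi/dn$. Since this Lagrangian $L$ is independent of $\theta$, the Hamiltonian $H = r'L_{r'} - L$ is conserved, yielding the first integral
\[
\frac{\lambda}{p+2}\,r^{p+2} - r^{p+1}\sin\phi = H.
\]
Imposing the normality condition at $(1,0)$, i.e.\ $r(0)=1$ and $\phi(0)=\pi/2$, fixes $H = \lambda/(p+2) - 1$, so
\[
\sin\phi \;=\; \mu\, r + \frac{1-\mu}{r^{p+1}} \;=:\; f(r), \qquad \mu := \frac{\lambda}{p+2}.
\]

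The five cases now follow by phase-plane analysis of $\sin\phi = f(r)$ subject to $|\sin\phi|\le 1$. When $\mu = (p+1)/(p+2)$, i.e.\ $\lambda = p+1$, a direct calculation shows $f$ is strictly convex with $f(1) = 1 = f_{\min}$, forcing $r\equiv 1$ (the circle about the origin). When $\mu = 1$, i.e.\ $\lambda = p+2$, we get $f(r) = r$; substituting into $dr/d\theta = r\cot\phi$ separates to $dr/d\theta = \pm\sqrt{1-r^2}$, giving $r(\theta) = \cos\theta$, which traces the circle $x^2+y^2 = x$ through the origin. When $\mu = 0$, i.e.\ $\lambda = 0$, $f(r) = r^{-(p+1)}$, and the substitution $u = r^{p+1}$ integrates $d\theta = dr/(r\sqrt{r^{2p+2}-1})$ to $\theta = (p+1)^{-1}\sec^{-1}(r^{p+1})$, which inverts to the stated geodesic formula and blows up at the asymptote $\theta = \pi/(2p+2)$. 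For $\mu \in (0,1)\setminus\{(p+1)/(p+2)\}$, both coefficients of $f$ are positive, so $f$ is strictly convex with $f(1)=1$ but $r=1$ not the minimum; there is a second root $r_1\ne 1$ of $f(r)=1$, and $r$ oscillates between $1$ and $r_1$ with $\sin\phi \in (0,1]$ throughout, producing a nonconstant positive polar graph, an undulary. Finally, for $\mu > 1$ or $\mu < 0$, one of the two terms in $f$ has negative coefficient, $f$ is monotonic from $-\infty$ to $+\infty$ (or vice versa), $\sin\phi$ sweeps the full range $[-1,1]$, and $\sin\phi = -1$ forces the curve out of the polar-graph regime into a periodic nodoid.

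The main obstacle is proving true periodicity in the undulary and nodoid regimes and showing that $r$ stays bounded away from both $0$ and $\infty$ there, so that the pictures in Figure~\ref{fig:Possible-types-of CGCC} really are realized. I would handle this using Proposition~\ref{pro:CGCC symmetric about crit pts}, which gives reflective symmetry across any ray through the origin and a critical point of $r$: once a quarter-period between a minimum and adjacent maximum of $r$ is understood, the full periodic curve follows by iterating reflections. Combined with the convexity or monotonicity of $f(r)$ identified above, this confirms closure of the trajectory in the $(r,\phi)$-cylinder and rules out $r\to 0$ or $r\to\infty$ except in the degenerate cases $\mu\in\{0,1\}$ already handled explicitly.
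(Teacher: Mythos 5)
Your proposal is correct and follows essentially the same route as the paper: the paper's (very brief) proof likewise rests on the constant generalized curvature ODE and its first integral — the same relation $\sin\phi=\mu r+(1-\mu)r^{-(p+1)}$ that underlies the half-period formula in Remark \ref{rem:CGCC remark} — together with the reflection symmetry of Proposition \ref{pro:CGCC symmetric about crit pts} to propagate periodicity. The only point to tighten is the one the paper singles out: you must show the second critical radius is actually attained in finite angle (no asymptotic spiraling), which in your setup follows because in the undulary and nodoid ranges the roots of $f(r)=\pm1$ are simple (by the strict convexity or strict monotonicity of $f$), so the turning radius is not an equilibrium of the $(r,\phi)$ system — equivalently, the circle of that radius has generalized curvature different from $\lambda$ — and the corresponding period integral converges.
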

\begin{proof}
These results follow from the differential equations for constant
generalized curvature curves. Two points deserve special mention.
First, reflection and scaling provides a correspondence between curves
outside the circle and curves inside the circle. Second, to prove
a curve $\gamma$ which is not the circle, semicircle or geodesic
is periodic, it suffices to show $\gamma$ has a critical point after
$(1,0)$. It follows directly from the differential equation that
there is a radius at which $\gamma$ would attain a critical point.
To prove $\gamma$ actually attains this radius, we note that otherwise
$\gamma$ would have to asymptotically spiral to this radius. This
implies $\gamma$ has the same generalized curvature as a circle of
this radius, which the equations confirm cannot happen.
\end{proof}
Isoperimetric curves in the $\theta_{0}$-sector are polar graphs
with constant generalized curvature that intersect the boundary perpendicularly.
Given these restrictions and the classification of Theorem \ref{thm:classification of CGCC},
we can give a sufficient condition for Conjecture \ref{con:Conjecture on major thm for MWD}
on the value of the transitional angles.
\begin{prop}
\label{pro:If period of CGCC increasing, then...-1}In the plane with
density $r^{p}$, assume the half period of constant generalized curvature
curves normal at $(1,0)$ with generalized curvature $0<\lambda<p+1$
is bounded below by $\pi/\sqrt{p+1}$ and above by $\pi(p+2)/(2p+2)$.
Then the conclusions of Conjecture \ref{con:Conjecture on major thm for MWD}
hold.\end{prop}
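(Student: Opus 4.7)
The plan is to combine the structural classification of isoperimetric curves with the hypothesized half-period bounds to rule out undularies in each extreme angle range, thereby pinning down both transitional angles $\theta_1$ and $\theta_2$ simultaneously.

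First I would establish the geometric meaning of ``half period'' inside the sector problem. By Corollary \ref{cor:Isoperimetric curves are monotonic}, any isoperimetric undulary in the $\theta_0$-sector is a strictly monotonic polar graph, and by Proposition \ref{des:Proposition - isoperimetric curves meet boundary perpendicular-1} it meets both bounding rays perpendicularly. Hence it starts at a critical point of the radius function at angle $0$ and ends at the next critical point at angle $\theta_0$. By Proposition \ref{pro:CGCC symmetric about crit pts}, this arc is exactly one half period of the associated periodic constant generalized curvature curve in the full plane. The homogeneity of $r^p$ lets me rescale and rotate so that the maximum radius is $1$ and occurs at $(1,0)$; using the reflection-and-scaling correspondence noted in the proof of Theorem \ref{thm:classification of CGCC}, the generalized curvature of this normalized curve can be taken in $(0,p+1)$. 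Since rescaling leaves the half period (an angle) unchanged, the hypothesis applies: if an undulary is isoperimetric in the $\theta_0$-sector, then $\theta_0\in[\pi/\sqrt{p+1},\,\pi(p+2)/(2p+2)]$.

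For the lower transitional angle, suppose $\theta_0<\pi/\sqrt{p+1}$. By the previous step no undulary is isoperimetric. An elementary check shows $\pi/\sqrt{p+1}<\pi(p+2)/(2p+2)$ for all $p>0$ (it reduces to $p^2>0$), so by Proposition \ref{pro:semi-circles don't minimize before pi/2*(p+2)/(p+1)} no semicircle through the origin minimizes either. Lemma \ref{lem:Minimizers are circles, semicircles, or unduloids} then forces the isoperimetric curve to be a circular arc about the origin, so $\theta_1\geq\pi/\sqrt{p+1}$. Combined with $\theta_1\leq\pi/\sqrt{p+1}$ from Theorem \ref{thm:Major theorem on MWD} (via Prop. \ref{pro:Circles are stable until pi/root p+1}), this gives $\theta_1=\pi/\sqrt{p+1}$. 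A symmetric argument handles the upper angle: if $\theta_0>\pi(p+2)/(2p+2)$ then no undulary is isoperimetric by hypothesis, and since $\theta_0>\pi/\sqrt{p+1}$, Proposition \ref{pro:Circles are stable until pi/root p+1} shows circular arcs fail to have nonnegative second variation, so they cannot minimize. Lemma \ref{lem:Minimizers are circles, semicircles, or unduloids} leaves only the semicircle, giving $\theta_2\leq\pi(p+2)/(2p+2)$; combined with Theorem \ref{thm:Major theorem on MWD} we obtain $\theta_2=\pi(p+2)/(2p+2)$.

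The main obstacle is the scaling-reduction step: justifying that bounding the half period for $\lambda\in(0,p+1)$ genuinely controls every undulary, including those whose natural normalization at $(1,0)$ places a minimum rather than a maximum there. This requires verifying that under the rescaling $r\mapsto \alpha r$ the generalized curvature transforms as $\lambda\mapsto\lambda/\alpha$ while the half period (being an angle) is invariant, and that the resulting correspondence exhausts the undulary family described in Theorem \ref{thm:classification of CGCC}. Once this reduction is in place, the remaining argument is just the case split from Lemma \ref{lem:Minimizers are circles, semicircles, or unduloids} together with the matching inequalities of Theorem \ref{thm:Major theorem on MWD}.
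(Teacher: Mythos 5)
Your proposal is correct and follows essentially the same route as the paper: reduce via the reflection‑and‑scaling correspondence to curves with $\lambda\in(0,p+1)$, note that an undulary is in equilibrium in the $\theta_0$-sector only when $\theta_0$ equals its half period, and then settle both extreme ranges using the trichotomy of Lemma \ref{lem:Minimizers are circles, semicircles, or unduloids} together with the bounds of Theorem \ref{thm:Major theorem on MWD} (Props. \ref{pro:Circles are stable until pi/root p+1} and \ref{pro:semi-circles don't minimize before pi/2*(p+2)/(p+1)}). Your write-up simply makes explicit the perpendicularity/monotonicity justification that the paper compresses into one sentence.
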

\begin{proof}
Given the correspondence mentioned in the proof of Theorem \ref{thm:classification of CGCC},
it suffices to only check curves with $0<\lambda<p+1$. By Proposition
\ref{lem:Minimizers are circles, semicircles, or unduloids}, isoperimetric
curves are either circles about the origin, semicircles through the
origin, or undularies. An undulary is only in equilibrium when the
sector angle is equal to its half period. If there are no undularies
with half period less than $\pi/\sqrt{p+1}$ or greater than $\pi(p+2)/(2p+2)$,
the only possible isoperimetric curves outside of that range are the
circle and the semicircle. The proposition follows by the bounds given
in Theorem \ref{thm:Major theorem on MWD}.\end{proof}
\begin{rem}
\label{rem:CGCC remark}We now discuss some results on the periods
of constant generalized curvature curves, and provide numeric evidence.
Using the second order constant generalized curvature equation, one
can prove that curves with generalized curvature near that of the
circle have periods near $\pi/\sqrt{p+1}$ . Similarly, using the
first integral of the constant generalized curvature equation for
$dr/d\theta$, we see that the half period of a curve with constant
generalized curvature is given by\[
T=\int_{1}^{r_{1}}\frac{dr}{r\sqrt{\frac{r^{2p+2}}{\left(\frac{1-r_{1}^{p+1}}{1-r_{1}^{p+2}}\left(1-r^{p+2}\right)-1\right)^{2}}-1}}\]
where $r_{1}$ is the curve's maximum radius. Using this formula,
we generated numeric evidence for the bounds given in Proposition
\ref{pro:If period of CGCC increasing, then...-1}, as seen in Figure
\ref{fig:Numerical-evidence-for period of CGCC-1}. This plot is given
for $p=2$, and is representative of all $p$. Moreover, the integral
seems to be monotonic in $r_{1}$ which would imply that every undulary
with $0<\lambda<p+2$ is isoperimetric for exactly one $\theta_{0}$-sector.
Francisco L$\acute{\text{o}}$pez has suggested studying the integral
above with the techniques of complex analysis.
\end{rem}
\begin{figure}
\includegraphics[scale=0.75]{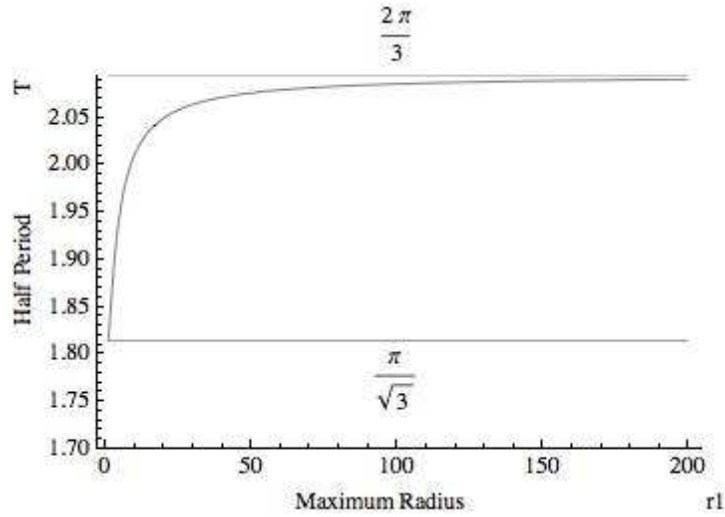}\caption{\label{fig:Numerical-evidence-for period of CGCC-1}This Mathematica
plot of undulary half period $T$ as a function of maximum radius
$r_{1}$ gives strong evidence for our main Conjecture \ref{con:Conjecture on major thm for MWD}
(see Prop. \ref{pro:If period of CGCC increasing, then...-1}).}

\end{figure}

\section{\label{sec:Ball Density}The Isoperimetric Problem in Sectors with
Disk Density}

In this section we classify the isoperimetric curves in the $\theta_{0}$-sector
with density 1 outside the unit disk $D$ centered at the origin and
$a>1$ inside $D$. Ca$\tilde{\text{n}}$ete \emph{et al.} \cite[Sect. 3.3]{Canete et al - Some isoperimetric problems in planes with density}
consider this problem in the plane, which is equivalent to the $\pi$-sector.
Proposition \ref{pro:anyarea-isosets-are} gives the potential isoperimetric
candidates. Theorems \ref{thm:thetalessthanpi}, \ref{thm:Pi<thetanot<aPI},
\ref{thm:aPI<thetanot} classify the isoperimetric curves for every
area and sector angle.
\begin{defn}
A \emph{bite }is an arc of $\partial D$ and another internal arc
(inside $D$), the angle between them equal to $\arccos(1/a)$ (see
Fig. \ref{fig:Isoperimetric-sets-for-thetanot-sector}(c)).
\end{defn}
\begin{figure}[h]
$\qquad$(a)\includegraphics{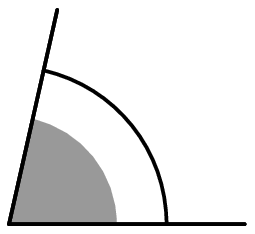} \includegraphics{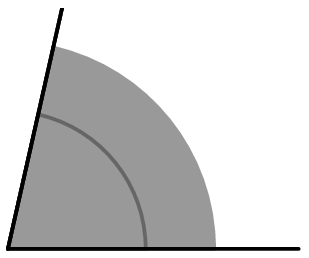}

$\qquad$(b)\includegraphics{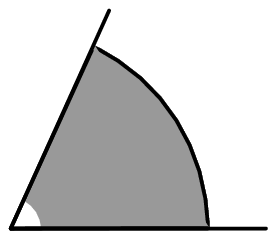} (c)\includegraphics{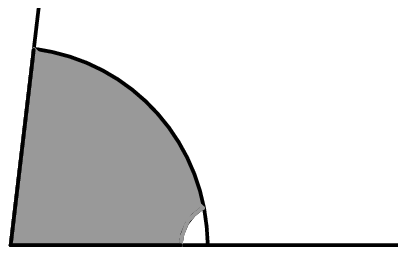}

$\qquad$(d)\includegraphics{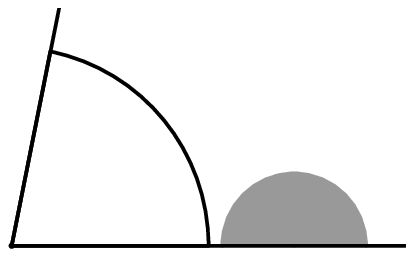} (e) \includegraphics{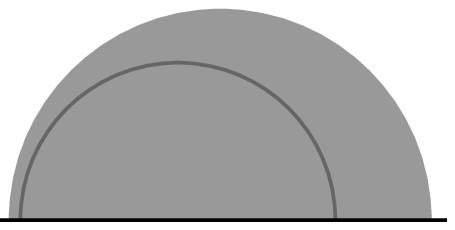}

\caption{\label{fig:Isoperimetric-sets-for-thetanot-sector}Isoperimetric sets
in sectors with disk density: (a) an arc about the origin inside or
outside $D$, (b) an annulus inside $D$, (c) a bite, (d) a semicircle
on the edge disjoint from the interior of $D$, (e) a semicircle centered
on the x-axis enclosing $D$ for $\theta_{0}=\pi$.}

\end{figure}

\begin{prop}
\label{pro:anyarea-isosets-are}In the $\theta_{0}$-sector with density
$a>1$ inside the unit disk $D$ and $1$ outside, for area $A>0$,
an isoperimetric set is one of the following (see Fig. \ref{fig:Isoperimetric-sets-for-thetanot-sector}):

(a) an arc about the origin inside or outside $D$;

(b) an annulus inside $D$ with $\partial D$ as a boundary;

(c) a bite;

(d) a semicircle on the edge of the sector disjoint from the interior
of $D$;

(e) a semicircle centered on the x-axis enclosing $D$ for $\theta_{0}=\pi$.\end{prop}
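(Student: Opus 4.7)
The plan is to combine the existing regularity machinery with a careful case analysis on how the boundary of an isoperimetric set interacts with $\partial D$ and the two rays of the sector. By Prop. \ref{pro:Isoperimetric exist in unit ball problem}, isoperimetric regions exist for every area; their boundaries are smooth constant-generalized-curvature curves except at $\partial D$, where Snell's rule $\cos\alpha_+/\cos\alpha_- = 1/a$ holds. Since the density is locally constant away from $\partial D$, constant generalized curvature there is just constant Euclidean curvature, so each boundary component consists of arcs of Euclidean circles (or straight lines) joined at $\partial D$ by refraction. Prop. \ref{des:Proposition - isoperimetric curves meet boundary perpendicular-1} forces perpendicular incidence at the two bounding rays, and by Lem. \ref{lem:there exist rays dividing iso. regions. in cone} and Prop. \ref{pro:cone-sector equivalence for iso. regions} I may replace the sector by its double, a $2\theta_0$-cone, which often lets me symmetrize across a diameter.

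I would then split into cases according to how the boundary meets $\partial D$. If the boundary avoids $\partial D$, then it is a single circular arc in one of the two (constant-density) regions meeting both rays perpendicularly; elementary plane geometry shows the only such arcs are circles centered at the origin (case (a)) and semicircles centered on one of the rays and disjoint from the interior of $D$ (case (d)). If the boundary contains a positive-length piece of $\partial D$, comparing with the concentric competitor shows the configuration must actually consist of an arc of $\partial D$ together with a concentric circular arc inside $D$, giving an annulus (case (b)), or, when $\theta_0=\pi$, a semicircle whose interior contains $\partial D$ entirely (case (e)). If the boundary meets $\partial D$ transversally, the refraction rule together with constancy of the Lagrange multiplier pins down the geometry of each such crossing: in the minimal nontrivial configuration we get exactly one inside-arc and one arc of $\partial D$ meeting at angle $\arccos(1/a)$, which is a bite (case (c)).

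The main obstacle will be ruling out the exotic multi-component configurations: several bites chained around $\partial D$, combinations of an arc about the origin with additional bites, or disconnected regions straddling $\partial D$. The strategy here is to mimic Cañete \emph{et al.}'s treatment in the plane \cite[Thm.\ 3.20]{Canete et al - Some isoperimetric problems in planes with density}: since all arcs on a given side of $\partial D$ have the same Euclidean radius (determined by the common value of the generalized curvature) and all crossings occur at the same Snell angle, two or more bites at the same radius can be merged into a single bite of the same weighted area but strictly smaller weighted perimeter, and a bite together with a concentric-arc piece can be replaced by an annulus or by a single bite of equal area and smaller perimeter. One also verifies that no component can float free of the sector boundary: by doubling to the cone and applying the symmetrization of Lem. \ref{lem:there exist rays dividing iso. regions. in cone}, one could otherwise reflect a cheaper half and contradict isoperimetry. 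Carrying out these reductions in each topological type whittles the list of candidates down to (a)--(e), giving the proposition.
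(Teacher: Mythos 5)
Your overall strategy (existence and Snell regularity, constant generalized curvature forcing piecewise circular arcs, perpendicularity at the rays, then a case analysis on the interaction with $\partial D$ with Cañete \emph{et al.} quoted to kill many-arc configurations) is the same as the paper's, but two steps have genuine gaps. First, your exclusion of components that do not touch the sector edges does not work as stated: after doubling to the cone there is no sector edge left to contradict, and reflecting a cheaper half across the rays of Lemma \ref{lem:there exist rays dividing iso. regions. in cone} merely produces another isoperimetric region, not a contradiction. The argument the paper uses (and which is needed) is a rotation argument: since the density is radial, a component disjoint from the rays can be rotated about the origin, remaining isoperimetric, until it first touches a ray; at first contact the curve, smooth by Proposition \ref{pro:Isoperimetric exist in unit ball problem}, meets the ray tangentially, contradicting the perpendicularity of Proposition \ref{des:Proposition - isoperimetric curves meet boundary perpendicular-1}. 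Without this, a circle or a lens straddling $\partial D$ floating in the interior of the sector is never excluded.

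Second, your case analysis misfiles the configurations at $\partial D$ and omits the one comparison that actually requires work. A bite is not a transversal crossing: it is the degenerate Snell configuration in which the free arc terminates on $\partial D$ at angle $\arccos(1/a)$ and the region's boundary continues along $\partial D$; so bites belong to your ``contains a positive-length piece of $\partial D$'' case, where your claim that only the annulus or case (e) can occur is false (bites are in fact isoperimetric for a range of areas, by Theorems \ref{thm:thetalessthanpi} and \ref{thm:Pi<thetanot<aPI}). Conversely, the genuinely transversal configurations---a single free arc crossing $\partial D$, with one circular piece inside and one outside joined by Snell's law, for instance a half-disk centered on a sector edge that dips into $D$---are candidates that your sketch never eliminates; ``pinning down the geometry'' by the refraction rule does not rule them out. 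The paper disposes of them by an explicit comparison: such a curve meeting the sector edge normally must also meet $\partial D$ normally, and then $P^{2}/A=2\left(\pi+\beta(a-1)\right)>2\pi$, so it loses to the semicircle (d). A further explicit comparison is needed (and given in the paper) for $\theta_{0}>\pi$ and $A>a\theta_{0}/2$, where a semicircle tangent to $\partial D$ together with the rest of $\partial D$ is in equilibrium but is beaten by an arc about the origin. Your merging argument for several arcs within one density region and the appeal to Cañete \emph{et al.} for three or more arcs along $\partial D$ do match the paper and are fine.
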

\begin{proof}
Any component of an isoperimetric curve has to meet the sector edge
perpendicularly since any component that meets the boundary at a different
angle contradicts Proposition \ref{des:Proposition - isoperimetric curves meet boundary perpendicular-1},
and any component that does not meet the sector edge can be rotated
about the origin (which will still give an isoperimetric region) until
it hits the sector edge at which point, since by Proposition \ref{pro:Isoperimetric exist in unit ball problem}
the curve is smooth, the intersection will be tangent so it will then
contradict Proposition \ref{des:Proposition - isoperimetric curves meet boundary perpendicular-1}.
Since each part of the boundary has to have constant generalized curvature
it must be made up of circular arcs (here we mean any circular arcs,
not necessarily arcs of circles about the origin as earlier in the
paper). We can discard the possibility of combinations of circular
arcs within the same density since one circular arc is better than
$n$ circular arcs (just as in the proof of Lemma \ref{lem:Minimizers are circles, semicircles, or unduloids}).
Therefore, there are five possibles cases: 

$\,$
\begin{enumerate}
\item A circular arc from one boundary edge to itself (including possibly
the origin).

\noindent There are three possibilities according to whether the semicircle
has 0, 1, or 2 endpoints inside the interior of $D$. A semicircle
with two endpoints inside $D$ has an isoperimetric ratio of $2\pi a$.
For a semicircle with one endpoint inside $D$ (Fig. \ref{fig:Semicircle-meeting-DB-perpendicular}),
by Proposition \ref{pro:Isoperimetric exist in unit ball problem}
Snell's Law holds. Then the only possible curve that intersects the
boundary normally would also have to intersect $D$ normally. Its
perimeter and area satisfy:%
\begin{figure}[h]
\includegraphics[scale=0.5]{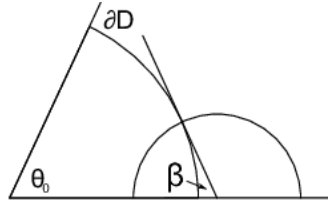}

\caption{\label{fig:Semicircle-meeting-DB-perpendicular}A semicircle meeting
$\partial D$ perpendicularly has more perimeter than a semicircle
disjoint from the interior of $D$ (Fig. \ref{fig:Isoperimetric-sets-for-thetanot-sector}d).}

\end{figure}

\[
P=r(\pi-\beta+a\beta),\]
\[
A=\frac{r^{2}}{2}\left(\pi-\beta+a\beta\right),\]
\[
\frac{P^{2}}{A}=2\left(\pi+\beta\left(a-1\right)\right).\]

\noindent Therefore a semicircle (d) outside $D$ with isoperimetric
ratio $2\pi$ is the only possibility.

$\,$

\item A circular arc from one boundary edge to another.

\noindent An arc (a) or annulus (b) about the origin are the only
possibilities for any $\theta_{0}<\pi$ or $A\leq a\theta_{0}/2$.
At $\theta_{0}=\pi$ and area $A>a\theta_{0}/2$ a semicircle (e)
centered on the x-axis enclosing $D$ is equivalent to a semicircle
centered at the origin. For $\theta_{0}>\pi$ and $A>a\theta_{0}/2$,
a semicircle tangent to $\partial D$ together with the rest of $\partial D$
(Fig. \ref{fig:Combinations of arcs}) is in equilibrium, but we will
show that it is not isoperimetric. Its perimeter and area satisfy:

\[
P=\pi R+(\theta_{0}-\pi),\qquad\qquad A=\frac{\pi}{2}\left(R^{2}-1\right)+\frac{\theta_{0}a}{2}.\]

\begin{figure}
\includegraphics[scale=0.5]{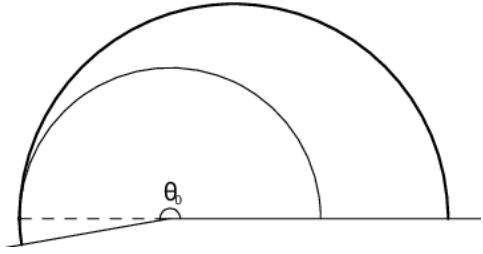}

\caption{\label{fig:Combinations of arcs} A semicircle tangent to $\partial D$
together with the rest of $\partial D$ is never isoperimetric.}

\end{figure}

\noindent Comparing it with an arc (a) about the origin we see that
it is not isoperimetric.

$\,$

\item Two circular arcs (c) meeting along $\partial D$ according to Snell's
Law (Prop. \ref{pro:Isoperimetric exist in unit ball problem}).

$\,$

\item Three or more circular arcs meeting along $\partial D$.

\noindent By Cañete \emph{et al.} \cite[Prop. 3.19]{Canete et al - Some isoperimetric problems in planes with density}
this is never isoperimetric.

$\,$

\item Infinitely many circular arcs meeting along $\partial D$.

\noindent By Cañete \emph{et al.} \cite[Prop. 3.19]{Canete et al - Some isoperimetric problems in planes with density}
this is never isoperimetric.

\end{enumerate}
\end{proof}
\begin{thm}
\label{thm:thetalessthanpi}For some $\theta_{2}<\pi$, in the $\theta_{0}$-sector
with density $a>1$ inside the unit disk $D$ and 1 outside, for $\theta_{0}\leq\pi$,
there exists $0<A_{0}<A_{1}<a\theta_{0}/2$, such that an isoperimetric
curve for area $A$ is (see Fig. \ref{fig:Isoperimetric-sets-for-thetanot-sector}):

(1) if $0<A<A_{0}$ , an arc about the origin if $\theta_{0}<\pi/a$,
semicircles on the edge disjoint from the interior of $D$ if $\theta_{0}>\pi/a$,
and both if $\theta_{0}=\pi/a$;

(2) if $A=A_{0}$, both type (1) and (3);

(3) if $A_{0}\leq A<A_{1}$ a bite, if $A_{1}<A<a\theta_{0}/2$ an
annulus inside $D$, and if $A=A_{1}$ both; if $\theta_{0}>\theta_{2}$,
$A_{1}=a\theta_{0}/2$ and the annulus is never isoperimetric;

(4) if $A\geq a\theta_{0}/2$, an arc about the origin; at $\theta_{0}=\pi$
any semicircle centered on the x-axis enclosing $D$.
\end{thm}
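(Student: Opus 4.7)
The plan is to use Proposition \ref{pro:anyarea-isosets-are} to reduce the problem to comparing the five explicit candidate families, and then to compute and compare their isoperimetric ratios as functions of area. First I would observe that for $\theta_0 \leq \pi$, candidate (e) only arises at $\theta_0 = \pi$ and is merely an alternative realization of an arc about the origin outside $D$ with the same perimeter and area; so genuine competitors are (a)--(d) only.

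The next step is to write down, for each candidate, the weighted perimeter $P$ as a function of the weighted area $A$ (or a single geometric parameter such as radius, from which both $P$ and $A$ can be read off). For the two candidates in (1), scaling gives a constant isoperimetric ratio $P^2/A$: it equals $2a\theta_0$ for the arc about the origin inside $D$ and $2\pi$ for the semicircle disjoint from the interior of $D$. These are equal precisely when $\theta_0 = \pi/a$, so comparing them yields the trichotomy in statement (1). For the bite, I would parametrize by the radius $R$ of the internal arc (meeting $\partial D$ at the Snell angle $\arccos(1/a)$) and derive $P(R)$, $A(R)$; for the annulus inside $D$, I would parametrize by inner radius $r \in (0,1)$, getting $A = a\theta_0(1-r^2)/2$ and $P = a\theta_0 r + \theta_0$ (the $\partial D$ piece contributing density $1$). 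For the arc about the origin outside $D$, similar scaling gives a monotone $P(A)$ starting from $A = a\theta_0/2$.

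Now for fixed $\theta_0 \leq \pi$ I would plot these $P(A)$ curves on $[0, a\theta_0/2]$ and on $[a\theta_0/2, \infty)$ and take their lower envelope. The small-$A$ winner is whichever of the two type-(1) candidates has the lower (constant) ratio, because near $A = 0$ the bite has $P(A)/\sqrt{A}$ bounded below by the type-(1) ratios (one can check that any sufficiently small bite has larger weighted perimeter than an arc about the origin of the same area by comparing angular contributions). The bite function, however, is sublinear in the same sense and its $P^2/A$ decreases in $R$, so at a unique crossing area $A_0$ the bite becomes preferable; this defines $A_0$. As $R \to 0$ the bite degenerates into a nearly full disk $D$ with weighted perimeter approaching $a \cdot (\text{some arc of } \partial D) + \theta_0$, while the annulus $P(A)$ can undercut the bite for $A$ close to $a\theta_0/2$ provided the savings from replacing the high-density $a$ arc with the unit-density $\partial D$ arc outweighs the cost of the extra inner arc. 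Comparing these two curves explicitly gives a second crossing $A_1$ (if any) and defines the critical angle $\theta_2 < \pi$ as the largest $\theta_0$ for which the bite and the annulus coincide exactly at $A_1 = a\theta_0/2$ --- above $\theta_2$ the bite strictly beats the annulus on the entire interval $[A_0, a\theta_0/2)$. Finally, for $A \geq a\theta_0/2$ the region must contain all of $D$ and a simple scaling/rearrangement argument shows the arc about the origin outside $D$ uniquely wins, with the $\theta_0 = \pi$ equivalence of statement (4) following from the rotational symmetry of the half plane.

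The main obstacle will be the two-parameter comparison between the bite and the annulus: establishing the existence and uniqueness of the transition $A_1(\theta_0)$, proving that $\theta \mapsto A_1(\theta)$ is monotone, and identifying the precise threshold $\theta_2$ beyond which the annulus is dominated everywhere. This requires a careful monotonicity analysis of the bite's $P(A)$ curve (using the fact that Snell's law fixes the meeting angle so only one parameter is free) together with a convexity or crossing-number argument to rule out more than one intersection with the annulus curve. Once this monotonicity is in hand, the claimed structure of parts (2)--(3), including the coincidence cases $A = A_0$ and $A = A_1$, follows directly from taking the lower envelope of the four ratios.
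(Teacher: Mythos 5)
Your overall framework---reduce to the five candidates of Proposition \ref{pro:anyarea-isosets-are} and take the lower envelope of their perimeter-versus-area curves---is exactly the paper's approach (the paper explicitly describes the proof as ``simple perimeter comparison'' given that proposition), and your constant ratios $2a\theta_{0}$ and $2\pi$ for the two small-area candidates correctly produce the threshold $\theta_{0}=\pi/a$ in part (1). The gap is in the one step you yourself flag as the main obstacle: you never actually prove that the bite/annulus comparison changes sign at most once, only that it ``requires a careful monotonicity analysis \dots together with a convexity or crossing-number argument.'' That single-crossing property is what defines $A_{1}$, what gives the case structure of part (3), and what makes the threshold $\theta_{2}<\pi$ (with $A_{1}=a\theta_{0}/2$ beyond it) meaningful; without it your ``second crossing $A_{1}$ (if any)'' could a priori be several crossings and the theorem does not follow from the envelope picture. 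A direct convexity analysis of the bite's $P(A)$ is genuinely awkward, since the Snell condition $\arccos(1/a)$ does not yield a tractable closed form, so gesturing at convexity does not close the argument. (Similarly, your claims that the bite's ratio $P^{2}/A$ is decreasing, giving a unique $A_{0}$, and that small bites lose to the type-(1) candidates are asserted rather than argued, though these are among the comparisons the paper also omits.)

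The paper closes precisely the hard gap with Lemma \ref{lem:Once bite wins, it always does}, whose mechanism you should adopt: rather than study $P(A)$ for bites analytically, compare a bite directly with the rescaling of a smaller bite. Scaling a smaller bite by $1+\epsilon$ and eliminating the perimeter pushed outside $D$ produces another bite with less weighted perimeter and more weighted area than the scaled copy; since the competing annulus changes exactly by scaling, it follows that once a bite does better than the annulus of the same area it continues to do so, so there is at most one bite/annulus transition. With that scaling lemma in hand (and the same trick available against the other, exactly scaling, candidate families), the remaining comparisons in (1), (2) and (4) are elementary and your outline goes through; without it, the heart of the theorem is missing.
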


\begin{thm}
\label{thm:Pi<thetanot<aPI}In the $\theta_{0}$-sector with density
$a>1$ inside the unit disk $D$ and 1 outside, for $\pi<\theta_{0}\leq a\pi$,
there exists $A_{0}$, $A_{1}$, such that an isoperimetric curve
for area $A$ is (see Fig. \ref{fig:Isoperimetric-sets-for-thetanot-sector}):

(1) if $0<A<A_{0}$ , a semicircle on the edge disjoint from the interior
of $D$;

(2) if $A=A_{0}$, both type (1) and (3);

(3) if $A_{0}<A<a\theta_{0}/2$, a bite;

(4) if $a\theta_{0}/2\leq A<\theta_{0}^{2}(a-1)/2(\theta_{0}-\pi)$,
an arc about the origin;

(5) if $A=\theta_{0}^{2}(a-1)/2(\theta_{0}-\pi)$, both type (4) and
(6);

(6) if $A>\theta_{0}^{2}(a-1)/2(\theta_{0}-\pi)$, a semicircle on
the edge disjoint from the interior of $D$.
\end{thm}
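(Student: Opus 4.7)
The plan is to invoke Proposition \ref{pro:anyarea-isosets-are} to restrict to the five candidate shapes, write down each candidate's weighted perimeter as a function of weighted area, and then compare the resulting profiles to separate out the six regimes. Since $\pi < \theta_{0} \leq a\pi$, the type (e) semicircle enclosing $D$ (which exists only when $\theta_{0} = \pi$) is excluded at once, leaving arcs about the origin, annuli inside $D$, bites, and semicircles on an edge disjoint from $D$.

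Next I would record each candidate's explicit $P$-versus-$A$ relation. The semicircle on an edge has $P^{2} = 2\pi A$ with no upper bound on area, since $\theta_{0} > \pi$ leaves room for arbitrarily large such semicircles on either edge. An arc about the origin of radius $r$ satisfies $P^{2} = 2a\theta_{0} A$ while $r \leq 1$, and $P^{2} = 2\theta_{0} A - \theta_{0}^{2}(a - 1)$ once $r > 1$, valid for $A \geq a\theta_{0}/2$. The bite profile is obtained from a one-parameter computation that uses the Snell angle $\arccos(1/a)$ between its internal arc and $\partial D$ from Proposition \ref{pro:Isoperimetric exist in unit ball problem}; it is defined on $[0, a\theta_{0}/2]$ and matches continuously with the outside-$D$ arc-about-origin curve at the right endpoint, where the bite has consumed all of $D$.

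The transitions then follow from algebraic comparison of these profiles. For small area, $\theta_{0} > \pi/a$ gives $2a\theta_{0} > 2\pi$, so the semicircle strictly beats every arc inside $D$, yielding regime (1). The semicircle curve $P^{2} = 2\pi A$ and the bite profile cross at a unique $A_{0}$, producing regimes (2) and (3). At $A = a\theta_{0}/2$ the bite exhausts $D$ and merges with the arc-about-origin profile, after which only arc and semicircle remain. Equating $2\theta_{0} A - \theta_{0}^{2}(a-1) = 2\pi A$ gives the crossover $A = \theta_{0}^{2}(a - 1)/[2(\theta_{0} - \pi)]$, and the inequality $a\theta_{0}/2 < \theta_{0}^{2}(a-1)/[2(\theta_{0}-\pi)]$ is equivalent to $\theta_{0} < a\pi$, which is our hypothesis; this guarantees regime (4) has positive length, and the two thresholds coalesce exactly at $\theta_{0} = a\pi$.

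The main obstacle is the detailed bite analysis: one must show that the bite's $P$-$A$ profile is strictly concave so that it meets $P^{2} = 2\pi A$ transversally at a single value $A_{0}$, and that the annulus inside $D$ is never better than the bite throughout $[A_{0}, a\theta_{0}/2]$. Ruling out the annulus is the subtlest point; I would use the Snell refraction rule from Proposition \ref{pro:Isoperimetric exist in unit ball problem} together with a direct comparison of the annulus' weighted boundary against that of a bite of the same weighted area to show that the annulus is strictly dominated in this regime. The equalities at $A = A_{0}$ and $A = \theta_{0}^{2}(a-1)/[2(\theta_{0}-\pi)]$ then follow from continuity of the profiles once the uniqueness of the crossings is established.
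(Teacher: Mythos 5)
Your reduction to the candidate list of Proposition \ref{pro:anyarea-isosets-are} and your explicit profiles are exactly how the easy comparisons go: $P^{2}=2\pi A$ for the edge semicircle, $P^{2}=2a\theta_{0}A$ for an arc inside $D$ (so the semicircle always wins that comparison since $\theta_{0}>\pi>\pi/a$), and $P^{2}=2\theta_{0}A-\theta_{0}^{2}(a-1)$ for an arc outside $D$, whose crossing with $2\pi A$ at $A=\theta_{0}^{2}(a-1)/[2(\theta_{0}-\pi)]$ together with the equivalence $a\theta_{0}/2<\theta_{0}^{2}(a-1)/[2(\theta_{0}-\pi)]\Leftrightarrow\theta_{0}<a\pi$ gives regimes (4)--(6). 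This is the same strategy as the paper, which treats these as routine perimeter comparisons.

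However, the two comparisons you flag as ``the main obstacle'' are precisely the content of the theorem for $A<a\theta_{0}/2$, and your proposal does not actually supply them. You assert (i) that the bite profile is strictly concave so that it crosses $P^{2}=2\pi A$ at a unique $A_{0}$, and (ii) that the annulus is dominated by the bite, to be shown by ``a direct comparison'' using Snell's law; neither is proved, and strict concavity of the bite profile is a stronger statement than needed and not evident from the Snell condition alone. The paper's device for both points is different and more elementary: since the semicircle, the annulus, and the arc all have exactly self-similar $P$--$A$ profiles under scaling, it suffices to show that a bite does better than the $(1+\epsilon)$-scaling of a smaller bite --- scale the smaller bite and discard the perimeter pushed outside $D$, which gives $P_{a}<(1+\epsilon)P$ and $A_{a}>(1+\epsilon)^{2}A_{1}$ (Lemma \ref{lem:Once bite wins, it always does}). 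This yields ``once the bite beats a scaling family, it always does,'' which gives at most one transition against the semicircle (hence a well-defined $A_{0}$, with the endpoint behavior at $A\to0$ and $A\to a\theta_{0}/2$ pinning down that the semicircle wins first and the bite wins last), and, combined with the check that the bite has already beaten the annulus by the time the annulus could compete for $\theta_{0}>\pi$, eliminates the annulus in this range. Without this monotone-transition argument (or a genuine proof of your concavity claim and of the annulus comparison), the partition into regimes (1)--(3) is not established, so as written the proposal has a real gap exactly where the paper concentrates its effort.
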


\begin{thm}
\label{thm:aPI<thetanot}In the $\theta_{0}$-sector with density
$a>1$ inside the unit disk $D$ and 1 outside, for $\theta_{0}>a\pi$,
the isoperimetric curves for area $A$ are semicircles on the edge
disjoint from the interior of $D$ (Fig. \ref{fig:Isoperimetric-sets-for-thetanot-sector}e).
\end{thm}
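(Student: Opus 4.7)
The plan is to invoke Proposition \ref{pro:anyarea-isosets-are}, which reduces the isoperimetric candidates to the five types (a)--(e) of Figure \ref{fig:Isoperimetric-sets-for-thetanot-sector}. Type (e) is excluded immediately because it requires $\theta_0=\pi$. It then suffices to show that at every area $A>0$ the semicircle of type (d)---placed on the edge far enough from the origin to be disjoint from $D$, so that its weighted perimeter is $P_d(A)=\sqrt{2\pi A}$---strictly beats every region of type (a), (b), and (c).

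For (a), an arc about the origin of radius $r\le 1$ inside $D$ satisfies $P^2/A = 2a\theta_0 > 2\pi$, which is trivial from $\theta_0>a\pi$. An arc of radius $r\ge 1$ (area $A\ge a\theta_0/2$) satisfies $P^2 = 2\theta_0 A - \theta_0^2(a-1)$, and the desired inequality $P^2 > 2\pi A$ rearranges to $2(\theta_0-\pi)A > \theta_0^2(a-1)$; since the left side is increasing in $A$ (as $\theta_0>\pi$), the worst case is $A=a\theta_0/2$, at which it reduces algebraically to $\theta_0>a\pi$, the hypothesis. For (b), the annulus inside $D$ always contains the $\partial D$-arc of weighted length $\theta_0$ in its boundary, so $P_{(b)}\ge \theta_0$; meanwhile the annulus has area at most $a\theta_0/2$, so $P_d(A)\le \sqrt{\pi a\theta_0}<\theta_0$, again using $\theta_0>a\pi$. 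Hence (d) beats (b) at every admissible area.

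The main obstacle is (c), the bite. The plan is to parameterize a bite by the radius $R$ of its non-$\partial D$ circular arc, imposing the Snell condition $\cos\alpha_+/\cos\alpha_-=1/a$ at the two points of intersection with $\partial D$ (Prop.\ \ref{pro:Isoperimetric exist in unit ball problem}); to write $P_{(c)}$ and $A_{(c)}$ explicitly as a sum of the weighted $\partial D$-arc (density $1$) and the interior or exterior circular arc (density $a$ or $1$); and then to verify $P_{(c)}^2>2\pi A_{(c)}$ throughout the admissible parameter range. By an argument analogous to the corresponding step in Cañete \emph{et al.}\ \cite[Sect.\ 3.3]{Canete et al - Some isoperimetric problems in planes with density} and in the preceding Theorems \ref{thm:thetalessthanpi} and \ref{thm:Pi<thetanot<aPI}, this should reduce to a one-parameter calculus check, with the hypothesis $\theta_0>a\pi$ providing exactly the margin needed for the inequality to hold uniformly over all admissible bites. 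Combining the three strict inequalities against (a), (b), and (c) then forces the semicircle (d) to be the unique isoperimetric curve for every area.
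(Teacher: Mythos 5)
Your overall strategy is the paper's own: reduce to the five candidates of Proposition \ref{pro:anyarea-isosets-are} and then settle the theorem by perimeter comparisons (the paper omits these as routine, supplying only the sample Lemma \ref{lem:Once bite wins, it always does}). The comparisons you do carry out are correct: the semicircle on the edge has $P^{2}/A=2\pi$; an arc about the origin inside $D$ has $P^{2}/A=2a\theta_{0}>2\pi$; an arc of radius $r\geq1$ has $P^{2}=2\theta_{0}A-\theta_{0}^{2}(a-1)$, and since $\theta_{0}>\pi$ the worst case $A=a\theta_{0}/2$ reduces exactly to the hypothesis $\theta_{0}>a\pi$; and the bound $P_{(b)}\geq\theta_{0}>\sqrt{\pi a\theta_{0}}\geq\sqrt{2\pi A}$ disposes of the annulus cleanly.

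The gap is case (c): you never actually compare the semicircle with the bite, you only announce that a parameterization by the internal arc and the Snell angle $\arccos(1/a)$ ``should'' reduce to a calculus check for which $\theta_{0}>a\pi$ ``provides exactly the margin.'' That margin is precisely what must be proved, and it is not slack: a bite whose internal arc degenerates tends to the full disk-sector, with $P\rightarrow\theta_{0}$ and $A\rightarrow a\theta_{0}/2$, so $P^{2}/A\rightarrow2\theta_{0}/a$, which exceeds $2\pi$ only because $\theta_{0}>a\pi$ and with vanishing room as $\theta_{0}\downarrow a\pi$. Thus the bite family is exactly where the constant $a\pi$ is sharp, and the inequality $P_{(c)}^{2}>2\pi A_{(c)}$ must be verified uniformly over all admissible bites --- for instance by writing $P_{(c)}$ and $A_{(c)}$ explicitly in terms of the internal arc (using the $\arccos(1/a)$ contact condition of Prop.\ \ref{pro:Isoperimetric exist in unit ball problem} and perpendicularity at the sector edge) and showing the degenerate bite is extremal, or by a scaling/monotonicity argument in the spirit of Lemma \ref{lem:Once bite wins, it always does}. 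Until that comparison is done, the hardest and decisive case of the theorem is missing, so the proof as written is incomplete.
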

\begin{figure}[H]
\includegraphics[scale=0.7]{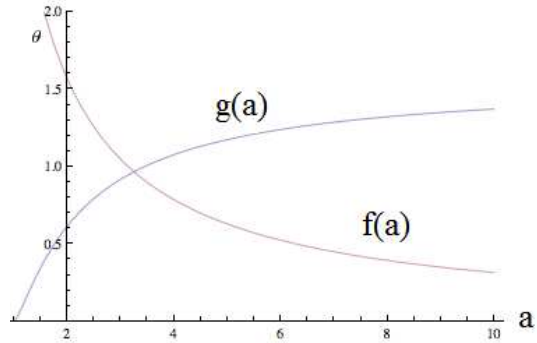}

\caption{\label{fig:Transition angle in function of density-1}For $\theta_{0}<f$
an arc about the origin is isoperimetric, while for $\theta_{0}>f$
a semicircle in the edge is isoperimetric. For area infinitesimally
less than $a\theta_{0}/2$, for $\theta_{0}<g$ an annulus inside
$D$ is isoperimetric, while for $\theta_{0}>g$ a bite is isoperimetric.}

\end{figure}

Given Proposition \ref{pro:anyarea-isosets-are}, most of the proofs
of Theorems \ref{thm:thetalessthanpi}, \ref{thm:Pi<thetanot<aPI},
\ref{thm:aPI<thetanot} are by simple perimeter comparison, which
we omit here. Figure \ref{fig:Transition angle in function of density-1}
shows numerically the transition between different isoperimetric curves.
The hardest comparison is handled by the following sample lemma.
\begin{lem}
\label{lem:Once bite wins, it always does}In the $\theta_{0}$-sector
with density $a>1$ inside the unit disk $D$ and 1 outside, for a
fixed angle and area $A<a\theta_{0}/2$, once a bite encloses more
area with less perimeter than an annulus inside $D$, it always does.\end{lem}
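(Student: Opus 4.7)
The plan is to show that the function $F(A) := P_a(A) - P_b(A)$, the difference between the weighted perimeters of an annulus and a bite of the same area $A$, is strictly monotonic on the common domain $A \in (0, a\theta_0/2)$. Strict monotonicity implies that $F$ changes sign at most once, so the set of areas on which the bite beats the annulus is a single interval---the bite cannot ``lose, then win again'' as $A$ varies.

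For the annulus inside $D$ with inner radius $r \in (0,1)$, the parametrization $A_a(r) = (a\theta_0/2)(1-r^2)$ and $P_a(r) = \theta_0(1+ar)$ yields $dP_a/dA = -1/r(A) < 0$ at once. For the bite, Proposition \ref{des:Proposition - isoperimetric curves meet boundary perpendicular-1} places the center of the internal arc on the sector edge at distance $d$ from the origin, and the Snell condition at $\partial D$ forces $d^2 = 1 + \rho^2 - 2\rho/a$ in terms of the arc's radius $\rho$. From this geometry one also derives the identities $\cos\phi = (1-\rho/a)/d$, $\cos t_P = (1/a - \rho)/d$, and the Pythagorean consequence $\sin\phi = \rho\sin t_P$, where $\phi$ and $t_P$ denote the angle subtended at the origin and at the internal arc's center respectively by the intersection point of the internal arc with $\partial D$.

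I would then write
\[
P_b = a\rho(\pi - t_P) + \phi, \qquad A_b = \tfrac{a}{2}\bigl(\phi - d\sin\phi + \rho^2(\pi - t_P)\bigr)
\]
and differentiate with respect to $\rho$. The crucial step is the algebraic identity
\[
\dot\phi\,(1-a\rho) = a\sin t_P\cos t_P,
\]
which follows from the Snell identities above together with the formula $\sin\phi\,\dot\phi = \rho(a^2-1)/(a^2 d^3)$ obtained by differentiating $\cos\phi = (1-\rho/a)/d$. This cancellation makes the contribution of the moving endpoint $P$ drop out and leaves $dP_b/dA = +1/\rho$; geometrically, this is the first-variation formula inside $D$ (where the density is constant), so the generalized curvature reduces to the classical curvature $1/\rho$ of the internal arc, with the sign fixed by the bite's outward normal pointing away from the arc's center.

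Combining the two calculations gives $F'(A) = -1/r - 1/\rho < 0$, so $F$ is strictly decreasing throughout, which yields the lemma. The main technical obstacle is the cancellation identity displayed above; without it one is stuck comparing the somewhat unwieldy closed forms of $P_a(A)$ and $P_b(A)$ directly. A useful sanity check is to verify $dP_b/dA = 1/\rho$ numerically at a sample radius, confirming that the motion of $P$ along $\partial D$ really does cancel out as claimed and reducing the final step to the trivial comparison of $-1/r$ and $1/\rho$.
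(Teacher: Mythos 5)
Your overall strategy---show that $F(A)=P_{a}(A)-P_{b}(A)$ is strictly monotone by computing $dP/dA$ for each family from the first variation---is a genuinely different route from the paper's, but it has a real gap at the decisive step. The relations you impose, $d^{2}=1+\rho^{2}-2\rho/a$, $\cos\phi=(1-\rho/a)/d$, and a $\partial D$ contribution of length $\phi$ in $P_{b}$, describe the bite that lies \emph{inside} the circle of its internal arc: a small lens at the corner where $\partial D$ meets the sector edge. For that family $dP_{b}/dA=+1/\rho$ is indeed correct, but its area is bounded away from $a\theta_{0}/2$, whereas the bite that competes with the annulus near the transition---the regime the lemma is about, with $A$ decreasing from $a\theta_{0}/2$, and with $A_{1}$ up to $a\theta_{0}/2$ in Theorem \ref{thm:thetalessthanpi}---is the complementary configuration: the region contains the origin, its $\partial D$ portion has length $\theta_{0}-\phi$, the center of the internal arc lies beyond $\partial D$ with $d^{2}=1+\rho^{2}+2\rho/a$, and the region sits on the concave side of the arc. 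There the same first-variation argument gives $dP_{b}/dA=-1/\rho$; one can see the sign without computation, since carving the corner lens out of the full disk sector adds weighted length $\approx a\rho\arccos(1/a)$ while removing only $\approx\rho\sin(\arccos(1/a))$ of $\partial D$, so $P_{b}$ increases as $A$ decreases. With the correct sign your derivative becomes $F'(A)=-1/r+1/\rho$, which is negative only if $\rho>r$ at equal areas---an additional comparison you neither state nor prove. It is not automatic: expanding at $A\to a\theta_{0}/2$ gives $r/\rho\to\sqrt{\bigl(\arccos(1/a)-\tfrac{1}{a}\sqrt{1-1/a^{2}}\bigr)/\theta_{0}}$, so the sign of $F'$ there depends on $\theta_{0}$, which is precisely the threshold $g$ of Figure \ref{fig:Transition angle in function of density-1} below which the annulus, not the bite, is isoperimetric for areas just under $a\theta_{0}/2$. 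Your claimed $F'<0$ everywhere would force the bite to beat the annulus for all $A<a\theta_{0}/2$ and all angles, contradicting Theorem \ref{thm:thetalessthanpi}(3).

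For comparison, the paper sidesteps all of this with a scaling argument: the annulus family is exactly self-similar (its extra perimeter scales linearly when the removed area scales quadratically), while a bite strictly beats the rescaled-and-truncated copy of a smaller bite, so the perimeter comparison can change sign at most once. If you want to salvage the derivative approach, you must work with the correct bite family (center beyond $\partial D$), prove $\rho(A)>r(A)$ on the relevant range of areas and angles, and handle the fact that the optimal bite configuration can change character as $A$ decreases.
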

\begin{proof}
Increasing the difference $a\theta_{0}/2-A$, it will be sufficient
to prove that a bite is better than a scaling of a smaller bite because
an annulus changes by scaling. Therefore, there will be at most one
transition.

\begin{figure}[H]
\includegraphics[scale=0.5]{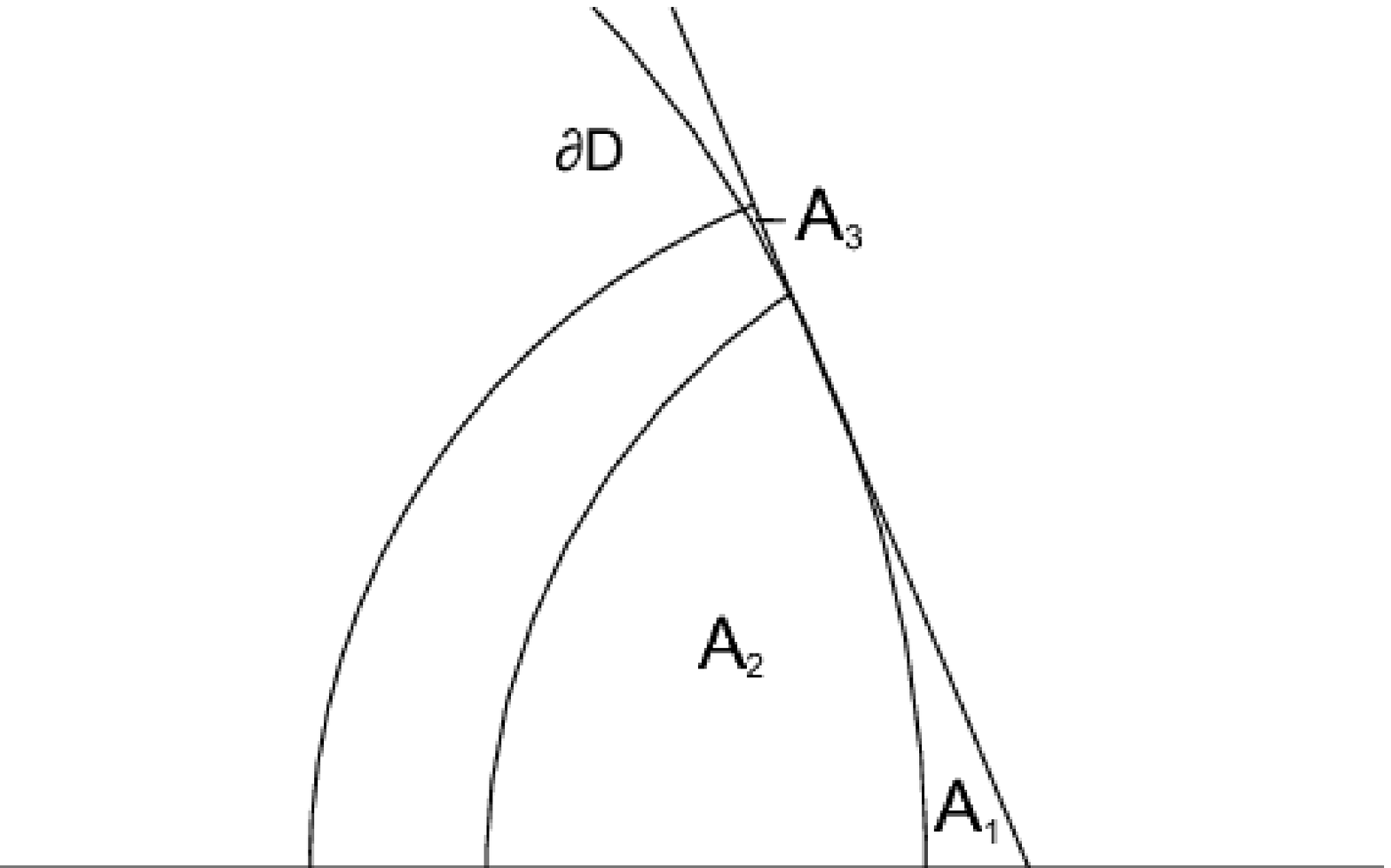}

\caption{\label{fig:Bite better than Scaling bite}A bite is better than a
scaling of a smaller bite, and hence once better than an annulus,
it is always better.}

\end{figure}

To prove that a bite does better than a scaling of a smaller bite
we show that it has less perimeter and more area. Take a smaller bite
and scale it by $1+\epsilon$ (see Fig. \ref{fig:Bite better than Scaling bite})
and eliminate the perimeter outside $D$. Another bite is created
with perimeter $P_{a}$ and area $A_{a}$:

\[
P_{a}<(1+\epsilon)P,\qquad A_{a}=\left(1+\epsilon\right)^{2}\left(A_{1}+A_{2}\right)-A_{3}>(1+\epsilon)^{2}A_{1},\]

\noindent Thus it is better than scaling a smaller bite.
\end{proof}

\section{Isoperimetric Problems in \label{sec:R^n w/ radial}$\mathbb{R}^{n}$
with Radial Density}

In this section we look at the isoperimetric problem in $\mathbb{R}^{n}$
with a radial density. We use symmetrization to show that if an isoperimetric
region exists, then an isoperimetric region of revolution exists (Lem.
\ref{lem:Isoperimetric regions bounded by surface of revolution}),
thus reducing the problem to a planar problem (Lem. \ref{lem:R^n w/density f(r) equivalent to 1/2 plane with density y^(n-2)f(r)}). 

We also consider the specific case of $\mathbb{R}^{n}$ with density
$r^{p}$. We first give a nonexistence result (Prop. \ref{pro:Minimizers don't exist for -n<p<0 in R^n})
for $-n\leq p<0$. The methods used by Carroll \emph{et al. }to study
planar densities $r^{-p},\; p<-2$ \cite[Prop. 4.3]{Carroll et al - Isoperimetric problem on planes with density}
can be used to recover an earlier result of Betta \emph{et al. }on
perimeter densities in $\mathbb{R}^{n}$ \cite[Thms. 2.1, 4.2, 4.3]{Betta et al},
and we provide a proof along these lines in Theorem \ref{thm:BettaEtAl-R^n}.
We then adapt this proof in Proposition \ref{Pro:Hypersphere-Minimize-p_LT_-n}
to show that hyperspheres about the origin bounding volume away from
the origin are uniquely isoperimetric in $\mathbb{R}^{n}$ with density
$r^{p},\; p<-n$. We conclude by conjecturing that hyperspheres through
the origin are isoperimetric in $\mathbb{R}^{n}$ with density $r^{p},\; p>0$.
(Conj. \ref{con:R^n with density r^p conjecture}). We'd like to thank
Bruno Volzone and Alexander Kolesnikov for bringing the paper by Betta\emph{
et al. }to our attention, as well as Robin Walters and Frank Morgan
for providing initial versions of our averaging proof of Theorem \ref{thm:BettaEtAl-R^n}.
\begin{lem}
\label{lem:Isoperimetric regions bounded by surface of revolution}In
$\mathbb{R}^{n}$ with a radial density, if there exists an isoperimetric
region, then there exists an isoperimetric region of revolution.\end{lem}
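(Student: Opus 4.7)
The plan is to use spherical symmetrization about a fixed axis, which is well-suited to the radial density since it preserves the sphere foliation that $\Psi(r)$ respects. Fix the positive $x_1$-axis and, for each $r > 0$, let $C(r)$ be the $(n-1)$-dimensional Hausdorff measure of $R \cap S_r$, where $S_r$ is the sphere of radius $r$ centered at the origin. Define the symmetrized region $R^*$ by declaring $R^* \cap S_r$ to be the spherical cap on $S_r$ centered at $r e_1$ whose $(n-1)$-dimensional measure equals $C(r)$. By construction, $R^*$ is invariant under the group of rotations fixing the $x_1$-axis, i.e., it is a region of revolution.

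First I would verify that $R^*$ has the same weighted volume as $R$. Using polar coordinates $dV_0 = r^{n-1} \, dr \, d\omega$ and the fact that the density $\Psi$ depends only on $r$,
\[
V(R^*) \;=\; \int_0^\infty \Psi(r)\, \mathcal{H}^{n-1}(R^* \cap S_r)\, dr \;=\; \int_0^\infty \Psi(r)\, C(r)\, dr \;=\; V(R),
\]
so the weighted volume is preserved. Next I would show that the weighted perimeter cannot increase, $P(R^*) \le P(R)$. Writing the perimeter via the co-area formula with respect to the radial function $r \mapsto |x|$, the boundary of $R$ decomposes into a tangential part lying on each sphere $S_r$ and a transverse part accounting for the rate of change of $C(r)$. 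On each sphere $S_r$, the spherical isoperimetric inequality shows that spherical caps minimize the perimeter of their boundary (as subsets of $S_r$) among all subsets of prescribed $(n-1)$-measure, so the tangential contribution at each level does not increase under symmetrization. The transverse contribution depends only on the profile $C(r)$, which is unchanged, and the density $\Psi(r)$ is likewise radial. Combining these pointwise estimates via co-area gives $P(R^*) \le P(R)$; this is the standard Pólya--Szegő-type statement for spherical symmetrization in a radial medium, and I would cite the corresponding result in \cite[Ch. 18]{Morgan - GMT}.

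Finally I would conclude: since $V(R^*) = V(R)$ and $P(R^*) \le P(R)$, and $R$ is isoperimetric, $R^*$ must also be isoperimetric, and it is a region of revolution by construction.

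The main obstacle is the perimeter inequality under spherical symmetrization; the volume equality is a one-line Fubini argument, but showing $P(R^*) \le P(R)$ requires the co-area decomposition together with the sharp spherical isoperimetric inequality on each level set $S_r$ and some care about the regularity of $R$ (handled by approximating $R$ by a finite-perimeter set and invoking standard GMT, since isoperimetric regions in manifolds satisfying our hypotheses are rectifiable). Once this step is granted, the rest is essentially bookkeeping.
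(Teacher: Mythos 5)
Your proposal is correct and follows essentially the same route as the paper: spherical symmetrization about a fixed axis, replacing the intersection with each sphere about the origin by a polar cap of equal measure, which preserves weighted volume and does not increase weighted perimeter because the density is radial. The paper simply outsources the perimeter inequality to Burago--Zalgaller's spherical symmetrization together with its generalization to densities in the authors' symmetrization paper, whereas you sketch the co-area/spherical-isoperimetric argument yourself; either way the content is the same.
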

\begin{proof}
Using spherical symmetrization \cite[Sect. 9.2]{Burago and Zalgaller- Geometric Inequalities}
generalized with density \cite{Symmetrization Paper}, we can replace
the intersection of the isoperimetric region with each sphere about
the origin by a polar cap of the same volume.
\end{proof}

\begin{lem}
\label{lem:R^n w/density f(r) equivalent to 1/2 plane with density y^(n-2)f(r)}The
isoperimetric problem in $\mathbb{R}^{n}$ with density $f(r)$ is
equivalent to the isoperimetric problem in the half plane $y>0$ with
density $y^{n-2}f(r)$.\end{lem}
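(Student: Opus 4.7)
By Lemma \ref{lem:Isoperimetric regions bounded by surface of revolution}, it suffices to establish the correspondence on the level of regions of revolution: every region of revolution in $\mathbb{R}^n$ about a fixed axis (say the $x$-axis) is the rotation of a unique region $R$ in the half plane $\{(x,y)\colon y>0\}$, and conversely. The plan is to show that under this correspondence, both the weighted volume and the weighted perimeter in $\mathbb{R}^n$ with density $f(r)$ agree, up to the same multiplicative constant, with the weighted area and weighted length of $R$ in the half plane with density $y^{n-2}f(r)$. Since that common constant cancels out of the isoperimetric problem, the two problems will be equivalent.

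First I would write a point of $\mathbb{R}^n$ as $(x,\mathbf{u})$ with $x\in\mathbb{R}$ and $\mathbf{u}\in\mathbb{R}^{n-1}$, set $y=|\mathbf{u}|$, and use the polar decomposition $d\mathbf{u}=y^{n-2}\,dy\,d\Omega$, where $d\Omega$ is the standard area element on $S^{n-2}$. Since $f$ is radial in $\mathbb{R}^n$ and $r^2=x^2+y^2$ is constant on each orbit of the rotation group, integrating over the $S^{n-2}$-fiber gives
\[
V_f(\widetilde R) \;=\; \int_{\widetilde R} f(r)\,dV \;=\; \omega_{n-2}\int_R y^{n-2}f(r)\,dA,
\]
where $\widetilde R$ is the region of revolution generated by $R\subset\{y>0\}$ and $\omega_{n-2}$ is the $(n-2)$-volume of the unit sphere in $\mathbb{R}^{n-1}$. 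The same fiber integration applied to the surface of revolution generated by the portion of $\partial R$ lying strictly inside the half plane yields
\[
P_f(\partial\widetilde R) \;=\; \omega_{n-2}\int_{\partial R\cap\{y>0\}} y^{n-2}f(r)\,ds.
\]

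Next I would observe that the portion of $\partial R$ that lies on the axis $\{y=0\}$ does not contribute to either side: on the $\mathbb{R}^n$ side the corresponding set lies on the axis of revolution and has $(n-1)$-dimensional Hausdorff measure zero, while on the half-plane side the density $y^{n-2}f(r)$ vanishes there (for $n\geq 3$; for $n=2$ the lemma is tautological). Thus the integral above may be taken over all of $\partial R$ in the half-plane sense, and the common factor $\omega_{n-2}$ cancels, giving an isoperimetric-ratio-preserving bijection between regions of revolution in $\mathbb{R}^n$ with density $f(r)$ and arbitrary regions in the half plane with density $y^{n-2}f(r)$.

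The only point that requires a little care is the axis: ensuring that the relevant symmetrization in Lemma \ref{lem:Isoperimetric regions bounded by surface of revolution} and the half-plane correspondence handle regions meeting $\{y=0\}$ consistently. This is the main obstacle, but it is resolved by the vanishing of $y^{n-2}$ on the axis together with the measure-zero nature of the axis in $\mathbb{R}^n$, so no genuine boundary contribution is lost.
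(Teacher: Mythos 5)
Your proposal is correct and follows essentially the same route as the paper: the paper's proof simply observes that the half plane with density $y^{n-2}f(r)$ is, up to a constant, the quotient of $\mathbb{R}^{n}$ with density $f(r)$ by rotations about an axis, and invokes the symmetrization lemma — exactly the fiber-integration correspondence you write out explicitly. Your extra care about the axis (vanishing density versus measure zero) is a fine addition, though the aside that the $n=2$ case is ``tautological'' is slightly off (there one still needs the reflection/doubling equivalence), which does not affect the argument for $n\geq 3$.
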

\begin{proof}
Up to a constant, this is the quotient space of $\mathbb{R}^{n}$
with density $f(r)$ modulo rotations about an axis. By Lemma \ref{lem:Isoperimetric regions bounded by surface of revolution},
there exists an isoperimetric region bounded by surfaces of revolution,
so a symmetric isoperimetric region in $\mathbb{R}^{n}$ corresponds
to an isoperimetric region in the quotient space and vice-versa.
\end{proof}
Isoperimetric curves in the plane with density $r^{p}$ are circles
about the origin when $p<-2$, do not exist when $-2\leq p<0$, and
are circles through the origin when $p>0$ (\cite[Props. 4.2, 4.3]{Carroll et al - Isoperimetric problem on planes with density},
\cite[Thm. 3.16]{Dahlberg et al - Isoperimetric regions in planes with density r^p}).
We now look at the isoperimetric problem in $\mathbb{R}^{n}$ with
density $r^{p}$.
\begin{prop}
\label{pro:Minimizers don't exist for -n<p<0 in R^n}For $-n\leq p<0$
in $\mathbb{R}^{n}$ with density $r^{p}$, isoperimetric regions
do not exist: you can enclose any volume with arbitrarily small perimeter.\end{prop}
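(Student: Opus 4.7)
The plan is to show that for any prescribed weighted volume $V > 0$ one can exhibit admissible regions of weighted volume $V$ whose weighted perimeter tends to zero, which rules out any isoperimetric minimizer. This adapts the planar argument of Carroll \emph{et al.} used in Proposition \ref{pro:Non-existence-pin-2,0} to $\mathbb{R}^n$. The key point is that the regimes $-n < p < 0$ and $p = -n$ call for different families of test regions, so I would handle them separately.

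For $-n < p < 0$, the plan is to take small Euclidean balls far from the origin. Fix $R$ large, pick $x_R \in \mathbb{R}^n$ with $|x_R| = R$, and consider $B_\delta(x_R)$ with $\delta < R/2$, so that the density $r^p$ is uniformly comparable to $R^p$ on the ball. Then the weighted volume is of order $R^p \delta^n$ and the weighted perimeter of order $R^p \delta^{n-1}$. Choosing $\delta$ so that the weighted volume equals $V$ gives $\delta \sim V^{1/n} R^{-p/n}$, and the admissibility condition $\delta < R/2$ reduces to $R^{(n+p)/n} \geq C V^{1/n}$, which holds for $R$ large because $n + p > 0$. Substituting back, the weighted perimeter is of order $V^{(n-1)/n} R^{p/n}$, which tends to zero as $R \to \infty$ since $p < 0$.

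For the borderline case $p = -n$, the ball construction fails: the scaling now forces $\delta \sim V^{1/n} R$, which violates $\delta < R/2$ as soon as $V \geq 2^{-n}$. Instead, I would use spherical shells $\{R \leq |x| \leq \alpha R\}$. The weighted volume $\omega_{n-1}\int_R^{\alpha R} r^{-1}\,dr = \omega_{n-1}\log\alpha$ is independent of $R$, so I would fix $\alpha = \exp(V/\omega_{n-1})$. The weighted perimeter is $\omega_{n-1}(R^{-1} + (\alpha R)^{-1})$, which vanishes as $R \to \infty$.

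The hard part is really just noticing that $p = -n$ is a genuinely distinct borderline case and that the ball construction must be replaced by a shell there; once that is recognized, both constructions reduce to direct computations in spherical coordinates.
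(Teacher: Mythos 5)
Your construction is correct, and for $-n<p<0$ it is essentially the paper's own argument in a different parametrization: the paper also uses Euclidean balls escaping to infinity, fixing the ball's Euclidean radius $R$, deducing from the volume constraint that its distance from the origin must exceed $c_{1}R^{q}$ with $q=-n/p\geq 1$, and concluding that the weighted perimeter is at most $c_{2}R^{-1}$; you instead fix the distance $R$ and solve for the radius $\delta\sim V^{1/n}R^{-p/n}$, which gives the same estimate. The genuine difference is at the borderline $p=-n$. The paper does not switch constructions there: it runs the same far-away-ball argument for one fixed volume small enough that the ball fits (your cap $V\lesssim 2^{-n}$ is precisely the paper's condition $c_{1}>3$), and then appeals to scaling to reach all volumes. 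That scaling step is clean for $-n<p<0$, where dilation by $\lambda$ multiplies weighted volume by $\lambda^{n+p}\neq 1$, but at $p=-n$ dilations preserve weighted volume, so the paper's extension to large volumes is at best glossed over at the borderline (it can be patched, for instance, by taking several disjoint far-away balls). Your spherical shells $\{R\leq |x|\leq \alpha R\}$, whose weighted volume $\omega_{n-1}\log\alpha$ is independent of $R$ and whose weighted perimeter $\omega_{n-1}\left(R^{-1}+(\alpha R)^{-1}\right)$ vanishes as $R\to\infty$, hit every prescribed volume directly and thus treat the case $p=-n$ more cleanly than the paper's scaling remark; the only caveat is that your claim that the ball construction ``fails'' at $p=-n$ should be read as failing only for volumes above a dimensional constant, not wholesale.
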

\begin{proof}
Consider a sphere $S$ of radius $R>0$ not containing the origin.
Let $r_{min},$ $r_{max}$ be the minimum and maximum values of $r$
attained on $S$, note that $0<r_{min}=r_{max}-2R$. Let $V,$ $P$
be the volume and perimeter respectively of $S$. We have that:\[
V>c_{0}R^{n}\min{}_{S}(r^{p})=c_{0}R^{n}r_{max}^{p}.\]
Working with a fixed $V$, we get that for any sphere of radius $R$
with volume $V$, \[
r_{max}>c_{1}R^{q}\]
 where $q=-n/p\geq1$ and $c_{1}=\left(V/c_{0}\right)^{1/p}$. So,
if we fix $V$ large enough so that $c_{1}>3$, we obtain \[
r_{min}>c_{1}R^{q}-2R>R^{q}\]
for any $R>1$. Looking at $P$ we have

\[
P<c_{2}R^{n-1}\max{}_{S}(r^{p})=c_{2}R^{n-1}r_{min}^{p}<c_{2}R^{n-1}(R^{q})^{p}=c_{2}R^{-1},\]
which can be made as small as we want by taking $R$ large, and so
since there are spheres of arbitrarily large radius not containing
the origin and having volume $V$, we see that there are regions of
arbitrarily small perimeter enclosing this fixed volume $V$. Then,
by scaling these regions, we obtain the result for all volumes. \end{proof}
\begin{rem*}
A proof that applies for $-n<p<-n+1$ was given by \cite[Prop. 4.2]{Carroll et al - Isoperimetric problem on planes with density}.
Their next proposition, \cite[Prop 4.3]{Carroll et al - Isoperimetric problem on planes with density},
which extends immediately to sectors, is by Proposition \ref{pro:Power change of coordinates}
equivalent to the $r^{p}$ case of a theorem of Betta \emph{et al.
}\cite[Thm. 4.3]{Betta et al}. Here we use the averaging technique
of Carroll \emph{et al.} to provide a short proof of the whole theorem
of Betta \emph{et al.}\end{rem*}
\begin{thm}[{\cite[Thm. 4.3]{Betta et al}}]
\label{thm:BettaEtAl-R^n}Suppose $a$ is a non-negative non-decreasing
smooth function on the positive real axis with $a(s)>0$ for $s>0$
and with \textup{\[
\left[a(s^{1/n})-a(0)\right]s^{1-1/n}\]
}convex. Then hyperspheres about the origin are uniquely isoperimetric
in $\mathbb{R}^{n}$ with Euclidean volume and surface area density
$a$. \end{thm}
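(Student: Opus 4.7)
The plan is to follow the averaging technique of Carroll \emph{et al.}\ and reduce to the classical Euclidean isoperimetric inequality via the divergence theorem. Given a region $\Omega$ of Euclidean volume $V$, let $R = (V/\omega_n)^{1/n}$ so that $B_R$ is the concentric ball of the same volume. Split
\[
P_a(\Omega) \;=\; a(0)\,P_0(\Omega) \;+\; \int_{\partial\Omega}[a(r)-a(0)]\,d\mathcal{H}^{n-1},
\]
where $P_0$ is Euclidean perimeter. The first summand is at least $a(0)\,P_0(B_R)$ by the classical Euclidean isoperimetric inequality, so it remains to bound the second.

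For this, set $\tilde h(r) = a'(r) + (n-1)[a(r) - a(0)]/r$, which is non-negative because $a$ is non-decreasing and non-negative. Applying the divergence theorem to the radial vector field $X(x) = [a(r)-a(0)]\hat r$ (smooth across the origin because $a(r)-a(0)$ vanishes there) gives
\[
\int_\Omega \tilde h(r)\,dV \;=\; \int_{\partial\Omega}[a(r)-a(0)]\,\nu\cdot\hat r\;d\mathcal{H}^{n-1} \;\leq\; \int_{\partial\Omega}[a(r)-a(0)]\,d\mathcal{H}^{n-1}.
\]
A direct computation recognizes $\tilde h(r)\,r^{n-1}$ as the derivative of $[a(r)-a(0)]\,r^{n-1}$, which yields $\int_{B_R}\tilde h\,dV = [a(R) - a(0)]\,n\omega_n R^{n-1}$. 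So $P_a(\Omega) \geq P_a(B_R)$ will follow once we establish the volume-distribution comparison $\int_\Omega \tilde h\,dV \geq \int_{B_R}\tilde h\,dV$.

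This comparison is exactly where the convexity hypothesis enters. Writing $g(s) = [a(s^{1/n}) - a(0)]\,s^{1-1/n}$ and differentiating via the substitution $u = s^{1/n}$ gives $g'(s) = \tilde h(u)/n$, so convexity of $g$ is equivalent to $\tilde h$ being non-decreasing in $r$. Now let $V_\Omega(r) = \mathrm{Vol}(\Omega\cap B_r)$ and $\bar V(r) = \mathrm{Vol}(B_R \cap B_r) = \min(\omega_n r^n, V)$. Since $\Omega\cap B_r \subseteq B_r$ and $\Omega$ and $B_R$ have the same total volume, $V_\Omega(r) \leq \bar V(r)$ for all $r$, with matching limits at $0$ and at $\infty$. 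By coarea and integration by parts,
\[
\int_\Omega \tilde h\,dV \;-\; \int_{B_R} \tilde h\,dV \;=\; \int_0^\infty \tilde h'(r)\,[\bar V(r) - V_\Omega(r)]\,dr \;\geq\; 0,
\]
since both factors in the integrand are non-negative.

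The main obstacle will be the technical care needed at the endpoints of the integration by parts (handling possible irregularity of $a$ at $0$ and the growth of $\tilde h$ at infinity, likely by an approximation argument on $a$) together with the rigidity argument for uniqueness. Equality throughout forces $\nu = \hat r$ almost everywhere on $\partial\Omega \cap \{r : a(r) > a(0)\}$, and combined either with the rigidity of the classical isoperimetric inequality (when $a(0) > 0$) or with strict monotonicity of $\tilde h$ on intervals where $a$ is strictly increasing (which forces $V_\Omega = \bar V$ there), this promotes equality to $\Omega = B_R$ up to a measure-zero set.
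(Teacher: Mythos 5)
Your argument is correct in substance, but it is a genuinely different route from the paper's. The paper follows the averaging technique of Carroll \emph{et al.}: after subtracting $a(0)$ it passes to the coordinate $s=r^{n}$, bounds weighted perimeter below by its \emph{tangential} component, writes the volume as the directional average of the signed crossing sums $t(\Theta)$, and then uses monotonicity plus Jensen's inequality for the convex increasing function $f(s)=a(s^{1/n})s^{1-1/n}$, so convexity enters as Jensen over the sphere of directions. You instead absorb the $a(0)$ part with the classical Euclidean isoperimetric inequality (the paper's reduction to $a(0)=0$ uses the same classical input, just phrased differently) and treat $a-a(0)$ by a calibration-type argument: Gauss--Green for $X=[a(r)-a(0)]\hat r$ bounds the weighted perimeter below by $\int_{\Omega}\operatorname{div}X$, and the convexity hypothesis enters as monotonicity of $\operatorname{div}X=\tilde h$ (your computation $g'(s)=\tilde h(s^{1/n})/n$ is correct), which feeds the bathtub comparison with $B_{R}$; note that the one-line estimate $\int_{\Omega\setminus B_{R}}\tilde h-\int_{B_{R}\setminus\Omega}\tilde h\geq\tilde h(R)\bigl(|\Omega\setminus B_{R}|-|B_{R}\setminus\Omega|\bigr)=0$ gives that comparison directly and sidesteps the integration by parts at infinity you were worried about. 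Two small repairs: $X$ is not smooth at the origin merely because $a(r)-a(0)$ vanishes there (take $a(r)-a(0)=\sqrt{r}$), but excising $B_{\epsilon}$ and using $[a(\epsilon)-a(0)]\,\epsilon^{n-1}\to0$ fixes this, and a truncation at large radius (or restricting first to bounded competitors) handles unbounded regions of finite volume. What each approach buys: yours localizes every inequality, so the equality case is read off from $\nu=\hat r$ on $\{a>a(0)\}$ together with the classical rigidity or strict monotonicity of $\tilde h$, while the paper's per-direction argument avoids any vector-field regularity issues and concludes rigidity from the boundary consisting of spheres about the origin; in the degenerate situations where $a$ is constant near the origin both uniqueness discussions are equally strained, so your sketch is no less complete than the paper's on that point.
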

\begin{proof}
As in Betta \emph{et al., }we can assume $a(0)$=0, since if we let
\[
a_{1}(s)=a(s)-a(0),\]
then $a$-weighted surface area is just the sum of $a_{1}$-weighted
surface area and a constant times Euclidean surface area, and so if
hyperspheres are isoperimetric for both these densities and uniquely
isoperimetric for one then they will also be uniquely isoperimetric
for $a$. 

Let $B$ be a region bounded by rectifiable hypersurfaces. We will
show that a ball about the origin $B'$ of the same volume has less
surface area than $B$ with equality only when $B$ is also a ball.
In fact, we will prove a stronger statement: Note that the surface
area of $B$ is greater than or equal to the tangential surface area
of $B$, i.e. the component of surface area perpendicular to radial
lines, with equality only when the boundary of $B$ consists only
of hypersurfaces. We will show that $B'$ has less tangential surface
area than $B$. 

In $\mathbb{R}^{n}$ with radial surface area density $a$ and unit
volume density, in polar coordinates the volume element is 

\[
dV=r^{n-1}\cdot drd\Theta,\]
and the tangential surface area element is \[
dQ=a(r)r^{n-1}d\Theta\]
where $d\Theta$ is the element of surface area on the unit sphere.
Under the change of coordinates $s=r^{n}$, the volume element becomes
\[
dV=\frac{1}{n}dsd\Theta\]
and the tangential surface area element becomes\[
dQ=fd\Theta\]
where $f(s)=a(s^{1/n})s^{1-1/n}$. Now, in the $s$-variable we obtain
\[
\int_{B}dV=\int_{\partial B}\pm s\cdot\frac{1}{n}d\Theta\]
where $s$ is signed according to the radial orientation of the boundary
(positive if pointing away from the origin, negative if pointing towards
the origin). Let $t(\Theta)$ be the sign-weighted sum of all the
$s$ values of points in the intersection of $\partial B$ and a radial
ray in the direction of $\Theta$. Note that this is not defined in
directions where $\partial B$ is radial, but this is a set of $(n-1)$-dimensional
Lebesgue measure $0$ in the $\Theta$-space so it will not effect
our computations. We then have that\[
\int_{B}dV=\int t(\Theta)\cdot\frac{1}{n}d\Theta,\]
where the latter integral is taken over all possible directions. Now,
let $t_{avg}$ be the average value of $t$ over all directions. Then
in these coordinates we see that $B'$, the ball about the origin
with the same volume as $B$, is actually the ball about the origin
of radius $t_{avg}$: \[
\int_{B}dV=\int t(\theta)\cdot\frac{1}{n}d\Theta=\int t_{avg}\cdot\frac{1}{n}d\Theta.\]
Now, we get

\begin{equation}
\int_{\partial B}dQ\geq\int f(t(\theta))\cdot d\Theta\geq\int f(t_{avg})\cdot d\Theta=\int_{\partial B'}dQ,\label{eq:IneqChainBettaProof}\end{equation}
the first inequality coming from $f$ increasing, and the second inequality
from $f$ convex (note that the middle integral should in general
be taken only over the directions where the intersection with $\partial B$
is non-empty, but in this case since $f$ is zero at the origin we
can in fact take it over all directions). So, we see that $B'$ has
tangential surface area less than or equal to that of $B$, and since
all of the surface area of $B'$ is counted in the tangential surface
area ($B'$ is a ball about the origin), we conclude that $B'$ has
surface area less than or equal to that of $B$ with equality possible
only if all of the surface area of $B$ is also tangential, that is
if $\partial B$ consists only of hyperspheres about the origin. But
if $\partial B$ consists of multiple hyperspheres about the origin
then the first inequality in \ref{eq:IneqChainBettaProof} must be
strict, and thus we conclude that equality holds if and only if $B=B'$. \end{proof}
\begin{prop}
\label{Pro:Hypersphere-Minimize-p_LT_-n}In $\mathbb{R}^{n}$ with
density $r^{p}$ for $p<-n$ , hyperspheres about the origin are uniquely
isoperimetric (bounding volume away from the origin).\end{prop}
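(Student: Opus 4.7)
My plan is to adapt the averaging argument used in the proof of Theorem~\ref{thm:BettaEtAl-R^n}. The key observation is that although the density $r^{p}$ now weights both volume and surface area, a suitable radial change of coordinates reduces the situation to one of Betta type, with the roles of ``origin'' and ``infinity'' interchanged. Setting $m := -(p+n) > 0$ and $s := r^{-m}/m$, a short calculation yields
\[
dV = r^{p+n-1}\, dr\, d\Theta = -ds\, d\Theta, \qquad dQ_{\mathrm{tan}} = r^{p+n-1}\, d\Theta = m^{1+1/m} s^{1+1/m}\, d\Theta =: f(s)\, d\Theta,
\]
where $dQ_{\mathrm{tan}}$ denotes the tangential component of the weighted surface area element. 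Since $1 + 1/m > 1$, the function $f$ is strictly convex and strictly increasing on $[0,\infty)$ with $f(0) = 0$ --- exactly the properties Betta's argument exploits. Under $r \mapsto s$ the point at infinity maps to $s = 0$, and the complement $\{r > R\}$ of a ball about the origin becomes the $s$-ball $\{s < R^{-m}/m\}$, whose entire boundary is tangential.

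Given a competitor $B$ of weighted volume $V$, I define $t(\Theta)$ to be the total $s$-length of the intersection of $B$ with the ray from $s = 0$ in direction $\Theta$, so that $V = \int_{\mathbb{S}^{n-1}} t(\Theta)\, d\Theta$. Let $B'$ be the exterior of the ball with the same weighted volume; in $s$-coordinates $B' = \{s < s_0\}$ with $s_0 = t_{\mathrm{avg}} := V/|\mathbb{S}^{n-1}|$, and it has weighted perimeter exactly $f(s_0)|\mathbb{S}^{n-1}|$. The Betta chain of inequalities then runs verbatim:
\[
\mathrm{Per}(B) \;\geq\; \int_{\mathbb{S}^{n-1}} \sum_i f(s_i(\Theta))\, d\Theta \;\geq\; \int_{\mathbb{S}^{n-1}} f(t(\Theta))\, d\Theta \;\geq\; |\mathbb{S}^{n-1}|\, f(t_{\mathrm{avg}}) \;=\; \mathrm{Per}(B'),
\]
where $s_1(\Theta) < \cdots < s_k(\Theta)$ are the $s$-values at which $\partial B$ crosses the ray in direction $\Theta$; the first inequality is the tangential--radial split of surface area, the middle inequality follows from the monotonicity of $f$ combined with the elementary bound $t(\Theta) \leq \max_i s_i$, and the last step is Jensen's inequality for the convex $f$. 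Tracing equality through all three steps forces $\partial B$ to be a single hypersphere about the origin of the correct radius, giving uniqueness.

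The step I expect to demand the most care is the signed-sum bookkeeping that defines $t(\Theta)$ when a radial ray crosses $\partial B$ several times: since the minimizer here is concentrated near infinity rather than near the origin, the sign convention differs slightly from Betta's. A small case split on whether $B$ contains a neighborhood of infinity along the given ray shows that in either configuration $0 \leq t(\Theta) \leq \max_i s_i(\Theta)$, after which the monotonicity bound $\sum_i f(s_i) \geq f(t(\Theta))$ becomes automatic. Once this bookkeeping is in place, the proof is a direct transcription of the argument for Theorem~\ref{thm:BettaEtAl-R^n}.
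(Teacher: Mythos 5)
Your proposal is correct and is essentially the paper's own argument: the paper uses the substitution $s=r^{p+n}$ (your $s=r^{-m}/m$ is the same change of variables up to a constant factor), obtains the tangential area element $f(s)\,d\Theta$ with exponent $1-\tfrac{1}{p+n}>1$, and then runs the averaging/convexity proof of Theorem~\ref{thm:BettaEtAl-R^n} verbatim, exactly as you do. Your extra care with the slice bookkeeping for $t(\Theta)$ and the role reversal of origin and infinity just makes explicit what the paper summarizes as ``the rest of the proof will carry through identically.''
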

\begin{proof}
We use the substitution $s=r^{p+n}$. This will give us tangential
surface area element\[
dQ=s^{1+\frac{-1}{p+n}}d\theta\]
and the rest of the proof of Theorem \ref{thm:BettaEtAl-R^n} will
carry through identically since \[
\frac{-1}{p+n}>0.\]
 \end{proof}
\begin{rem*}
The proof of Theorem \ref{thm:BettaEtAl-R^n} also gives this result
in the $\theta_{0}$-sector and even more generally in any cone over
a subset of $S^{n-1}$, but we may have to change the convexity condition:
if hyperspheres about the origin are not isoperimetric in such a space
with Euclidean surface area, then we cannot perform the trick of subtracting
off $a(0)$, and thus we will want to impose the stronger condition
instead that $a(s^{1/n})s^{1-1/n}$ is convex. We note that if we
take the cone over a convex subset of $S^{n-1}$ then in the Euclidean
case hyperspheres about the origin will be isoperimetric (\cite[Thm 1.1]{LionsEtAl-ConvexCones},
see also \cite[Rmk. after Thm. 10.6 ]{Morgan-RiemannianGeom}) and
therefore the original convexity condition will suffice. The proof
of Proposition \ref{Pro:Hypersphere-Minimize-p_LT_-n} goes through
in any of these spaces with no new conditions.

For $\mathbb{R}^{n}$ with density $r^{p},\; p>0$, we make a conjecture
analagous to the planar case which was solved by Dahlberg \emph{et
al. \cite{Dahlberg et al - Isoperimetric regions in planes with density r^p}:}\end{rem*}
\begin{conjecture}
\label{con:R^n with density r^p conjecture}In $\mathbb{R}^{n}$ with
density $r^{p},\; p>0$, hyperspheres through the origin are uniquely
isoperimetric. \end{conjecture}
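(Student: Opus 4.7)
My plan is to follow the two-step reduction already set up in Section \ref{sec:R^n w/ radial}. By Lemma \ref{lem:Isoperimetric regions bounded by surface of revolution}, spherical symmetrization lets me assume that any isoperimetric region is a region of revolution about some axis through the origin. By Lemma \ref{lem:R^n w/density f(r) equivalent to 1/2 plane with density y^(n-2)f(r)}, the problem then becomes an isoperimetric problem in the half-plane $\{y>0\}$ with density $y^{n-2}r^{p}$, where $r=\sqrt{x^{2}+y^{2}}$. Under this correspondence a hypersphere through the origin in $\mathbb{R}^{n}$ becomes a half-disk whose boundary is a semicircle meeting the $x$-axis orthogonally at the origin and at one other point, so the conjecture reduces to a planar claim.

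First I would establish existence for this weighted half-plane problem. A scaling argument parallel to Proposition \ref{pro:more area/less perimeter argument works} gives an isoperimetric profile of the form $I(V)=cV^{(n+p-1)/(n+p)}$, which is strictly concave, and compactness arguments along the lines of Theorem \ref{The:Existence of minimizers in manifolds with area density satisfying certain properties} rule out escape of volume to infinity. Isoperimetric boundaries would then be smooth curves of constant generalized curvature for the weight $y^{n-2}r^{p}$, meeting the $x$-axis perpendicularly by the standard doubling/reflection argument used in Proposition \ref{des:Proposition - isoperimetric curves meet boundary perpendicular-1}. The next step is to classify these constant generalized curvature curves, yielding (in analogy with Theorem \ref{thm:classification of CGCC}) a one-parameter family including semicircles through the origin, arcs about the origin, unduloid-type periodic graphs, and nodoids. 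As in Lemma \ref{lem:Minimizers are circles, semicircles, or unduloids}, a ratio comparison using concavity of $I$ eliminates multi-component candidates and forces curves through the origin to consist of a single semicircle.

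The main obstacle, exactly as in the planar proof of Dahlberg \emph{et al.}, is ruling out the unduloid candidates and the arc about the origin; the arc can be beaten by pushing mass inward (using that the ambient density $r^{p}$ decreases toward the origin), but the unduloids require a genuinely delicate analysis of the first integral of the CGCC equation, now carrying an extra factor of $y^{n-2}$ that makes every period integral messier. One natural route is to adapt the Dahlberg \emph{et al.} period-monotonicity argument to this weighted setting directly. A more promising alternative, and the one I would try first, is an averaging argument in the spirit of Theorem \ref{thm:BettaEtAl-R^n}: after the substitution $s=r^{p+n}$ the volume element in $\mathbb{R}^{n}$ with density $r^{p}$ becomes $\tfrac{1}{p+n}\,ds\,d\Theta$, and the tangential surface area element becomes $f(s)\,d\Theta$ for some explicit $f$; one hopes to show via a convexity estimate that among regions of fixed $s$-volume the tangential perimeter is minimized when the region's intersection with each radial ray is a single interval $[0,t(\Theta)]$ arranged (after translation) to form a ball through the origin. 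If the resulting $f$ can be shown to satisfy an appropriate concavity condition, this would simultaneously handle all $n\geq 2$ and yield uniqueness.
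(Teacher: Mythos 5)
This statement is not proved in the paper: it is stated as Conjecture \ref{con:R^n with density r^p conjecture} and listed among the open questions, so there is no argument of the authors for you to match, and your proposal does not close the gap either. The reduction you describe (symmetrization to a region of revolution, then passing to the half-plane $y>0$ with density $y^{n-2}r^{p}$) is exactly the setup of Lemmas 6.2 and 6.3, and existence, regularity, perpendicularity at the axis, and a classification of constant generalized curvature curves are plausible to carry out. But the decisive step --- eliminating the unduloid-type competitors (and the arc about the origin for all volumes, not just heuristically) --- is precisely the content of the conjecture, and you leave it as ``a genuinely delicate analysis'' to be adapted from Dahlberg \emph{et al.} The extra factor $y^{n-2}$ destroys the structure their planar argument relies on, and even in the plane ($n=2$) the analogous refinement (the exact transitional angles of Conjecture \ref{con:Conjecture on major thm for MWD}) is open; so this part of your plan is a research program, not a proof.

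The alternative route via the averaging technique of Theorem \ref{thm:BettaEtAl-R^n} cannot work as stated, for a structural reason. That argument lower-bounds the perimeter by its \emph{tangential} part, writes the tangential perimeter as $\int f(t(\Theta))\,d\Theta$ for a radial profile $t(\Theta)$, and then applies Jensen, which requires $f$ convex (this is why it proves balls \emph{about} the origin are optimal in Theorem \ref{thm:BettaEtAl-R^n} and Proposition \ref{Pro:Hypersphere-Minimize-p_LT_-n}). For density $r^{p}$ with $p>0$ on both volume and perimeter, the substitution $s=r^{n+p}$ gives tangential area element $f(s)\,d\Theta$ with $f(s)=s^{(n+p-1)/(n+p)}$, which is strictly \emph{concave}, so Jensen runs in the wrong direction; worse, among profiles $t(\Theta)\ge 0$ with fixed average, $\int f(t)\,d\Theta$ has infimum $0$ (concentrate the volume in a small set of directions), so the tangential bound cannot single out any minimizer. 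The deeper obstruction is that a sphere through the origin carries a large \emph{radial} component of perimeter, which is exactly what the tangential estimate discards; an argument that ignores radial perimeter can never prove that spheres through the origin are optimal. So no ``appropriate concavity condition'' on $f$ can rescue this approach; a genuinely new idea is needed, which is why the authors record the statement as a conjecture.
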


$\quad$

\author{\noindent Alexander Díaz, Department of Mathematical Sciences, }

\author{\noindent University of Puerto Rico, Mayag$\ddot{\text{u}}$ez, PR
00681}

\emph{E-mail address}: alexander.diaz1@upr.edu

$\quad$

\author{\noindent Nate Harman, Department of Mathematics and Statistics,}

\author{\noindent University of Massachusetts, Amherst, MA 01003}

\emph{E-mail address}: nateharman1234@yahoo.com

$\quad$

\author{\noindent Sean Howe, Department of Mathematics,}

\author{\noindent Université Paris-Sud 11, Orsay, France 91405}

\emph{E-mail address}: seanpkh@gmail.com

$\quad$

\author{\noindent David Thompson, Department of Mathematics and Statistics,}

\author{\noindent Williams College, Williamstown, MA 01267}

\emph{E-mail address}: dat1@williams.edu

$\quad$

\author{\noindent Mailing Address: c/o Frank Morgan, Department of Mathematics
and Statistics, Williams College, Williamstown, MA 01267}

\emph{E-mail address}: Frank.Morgan@williams.edu
\end{document}